\newtheorem{thm}{Theorem}[section]
\newtheorem{prop}[thm]{Proposition}
\newtheorem{cor}[thm]{Corollary}
\newtheorem{lem}[thm]{Lemma}
\theoremstyle{definition}
\newtheorem{defn}[thm]{Definition}
\newtheorem{rmk}[thm]{Remark}
\newtheorem{exmp}[thm]{Example}
\newcommand{\bC}{\mathbb{C}}
\newcommand{\GL}{\mathrm{GL}}
\newcommand{\SL}{\mathrm{SL}}
\newcommand{\cA}{\mathcal{A}}
\newcommand{\cX}{\mathcal{X}}
\newcommand{\cI}{\mathcal{I}}
\newcommand{\DT}{\mathrm{DT}}
\newcommand{\id}{\mathrm{id}}
\newcommand{\bX}{\mathbf{X}}
\newcommand{\bQ}{\mathbb{Q}}
\newcommand{\bk}{\mathbf{k}}
\newcommand{\bA}{\mathbf{A}}
\newcommand{\bW}{\mathbb{W}}
\newcommand{\sgn}{\mathrm{sign}}
\newcommand{\fD}{\mathfrak{D}}
\newcommand{\op}{\mathrm{op}}
\newcommand{\spec}{\mathrm{Spec}}
\colorlet{lightblue}{blue!30!white}
\colorlet{lightred}{red!30!white}
\colorlet{lightteal}{teal!30!white}
\title{F-Polynomials of Donaldson-Thomas Transformations}
\author{Daping Weng}
\begin{document}

\maketitle

\begin{abstract} $F$-polynomials are integer coefficient polynomials encoding the mutations of cluster variables inside a cluster algebra. In this article, we study the $F$-polynomials associated with the action of Donaldson-Thomas transformations on cluster variables. For acyclic quivers, quivers of surface types, and quivers associated with triples of flags, we give explicit descriptions of their Donaldson-Thomas $F$-polynomials in terms of generating functions for ideals inside a labeled poset. We also describe the combinatorial procedure needed to modify these labeled posets to obtain Donaldson-Thomas $F$-polynomials for full subquivers and triangular extensions.
\end{abstract}

\setcounter{tocdepth}{1}
\tableofcontents

\section{Introduction}

Cluster algebra, introduced by Fomin and Zelevinsky \cite{FZI}, is a special family of commutative algebras whose generators and relations are given by a certain recursive procedure called cluster mutations. The combinatorics of a cluster algebra is captured by a family of quivers, which are directed graphs without oriented $1$-cycles and $2$-cycles, related by quiver mutations. Based on the family of mutation equivalent quivers, cluster algebras can be classified into smaller families, such as \emph{finite type} (admitting a Dynkin type quiver), \emph{acyclic type} (admitting a quiver with no oriented cycles), \emph{surface type} (admitting a quiver that comes from a triangulation of a decorated surface), \emph{finite mutation type} (admitting finitely many mutation equivalent quivers up to isomorphisms), and \emph{locally acyclic type} (admitting localizations to acyclic cluster algebras).

Kontsevich and Soibelman \cite{KS} associate a 3d Calabi-Yau category with stability conditions with a quiver (with potential) and construct a Donaldson-Thomas invariant of such categories. This Donaldson-Thomas invariant gives rise to a unique formal automorphism, called the \emph{Donaldson-Thomas transformation}, on the corresponding cluster algebra. Keller \cite{KelDT} characterizes the DT transformation combinatorially by a reddening mutation sequence on the quiver. Using combinatorial methods, reddening mutation sequences have been constructed for finite type and acyclic quviers \cite{BDP}, plabic graph quivers \cite{FordSerhiyenko}, a subfamily of finite mutation type quivers \cite{Mills}, and a subfamily of locally acyclic quivers called Banff quivers \cite{BucherMachacek}. On the other hand, based on Keller's characterization, the DT transformations of many cluster algebras of geometric origins have been constructed geometrically on moduli spaces of $G$-local systems \cite{GS2,GS3}, Grassmannians \cite{Weng}, double Bott-Samelson cells \cite{Wengdb, SWflag}, and braid varieties \cite{CW, CGGLSS}. 

The special generators of a cluster algebra produced by cluster mutations are called \emph{cluster variables}, and they are grouped into overlapping subsets of the same size called \emph{clusters}. One important property of cluster algebras is the \emph{Laurent phenomenon} \cite{FZI}, which states that given a fixed cluster (also known as an \emph{initial cluster}), one can write any other cluster variable as a Laurent polynomial in terms of the initial cluster variables. Moreover, according to the separation formula of Fomin and Zelevinsky \cite{FZIV}, this Laurent polynomial can be further arranged as a product of a Laurent monomial, which is encoded by an integer tuple called the \emph{$G$-vector}, and another Laurent polynomial that can be obtained from a polynomial $F$ via substitution of variables (Theorem \ref{thm: separation formula}). The polynomial $F$ is called an \emph{$F$-polynomial}. Lee and Schiffler \cite{LS} prove that $F$-polynomials have positive integer coefficients. Gross, Hacking, Keel, and Kontsevich \cite{GHKK} give a geometric interpretation of $F$-polynomials as counting certain tropical curves called \emph{broken lines}, which implies not only the positivity result on coefficients but also the fact that $F$-polynomials always have a constant term $1$. However, due to the level of complexity in constructing broken lines, $F$-polynomials are generally poorly understood.

The DT transformation acts on a cluster algebra by permuting its clusters and mapping cluster variables to cluster variables. Thus, one can apply the separation formula to $\DT(A_i)$ for any initial cluster variable $A_i$ and extract a $G$-vector $g_i$ and an $F$-polynomial $F_i$ (see Equation \eqref{eq: DT F polynomial}). It follows from tropical cluster duality \cite{NZ} that the $G$-vector $g_i$ is always a negative basis vector; thus, all the information of the action of $\DT$ is contained in the $F$-polynomial $F_i$. In this article, we focus on some special families of quivers and cluster variables and relate their $F$-polynomials of DT transformations to certain generating functions, which we call \emph{ideal functions}, of ideals in a labeled poset (Definitions \ref{defn: ideals} \ref{defn: labeled posets}, \ref{defn: ideal functions}). Below is a summary of our main results.

\begin{thm}\label{thm: main theorem} For the following families of quivers and vertices, their $F$-polynomials of DT transformations are given by ideal functions of simply-labeled pointed posets:
\begin{center}
\begin{tabular}{|c|c|c|c|} \hline
    \emph{Quiver Types} & \emph{Vertices} & \emph{Posets} & \emph{Theorems} \\ \hline
    \emph{acyclic} & \emph{any vertex} & \emph{ascendant tree} & \emph{\ref{thm: DT for acyclic quivers}} \\ \hline
    \emph{surface type} & \emph{admissible arc} & \emph{Figure \ref{fig: Hasse diagrams for surface quivers} } & \emph{\ref{thm: DT for surfaces}} \\ \hline
    \emph{triple of flags} & \emph{any vertex} & \emph{3d Young diagram} & \emph{\ref{thm: DT for triple of flags}} \\ \hline
\end{tabular}
   \end{center}
Moreover, for quivers that are constructed by taking full subquivers or triangular extensions, their $F$-polynomials can be constructed by a corresponding truncation \emph{(Corollary \ref{cor: truncation})} or extension \emph{(Corollary \ref{cor: extension})}.
\end{thm}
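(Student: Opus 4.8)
\emph{Proof strategy.} The three rows of the table would be established by three separate computations tied together by one principle: realize $\DT$ through Keller's characterization \cite{KelDT} as a reddening (maximal green) mutation sequence $\mu_{k_N}\cdots\mu_{k_1}$ on the principal-coefficients quiver, propagate the $F$-polynomials along that sequence using the standard $F$-polynomial mutation rule \cite{FZIV}, and then match the polynomial $F_i$ attached to $\DT(A_i)$ to the ideal function (Definition \ref{defn: ideal functions}) of a labeled poset (Definition \ref{defn: labeled posets}) read off from the combinatorics of the sequence. In each case the poset records how the exponent vectors occurring in $F_i$ nest, and the content of the argument is to show (i) which monomials occur, (ii) that each occurs with coefficient equal to the number of order ideals producing that exponent, and (iii) that there is no cancellation -- equivalently, by \cite{GHKK}, that the broken-line count stabilizes to the ideal count.

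\textbf{Acyclic quivers.} Here I would fix a topological order $v_1 < \cdots < v_n$ and use the explicit reddening sequence for acyclic quivers from \cite{BDP}. Running the $F$-polynomial recursion along it, one checks by induction on the order that $F_i$ involves only the variables $y_j$ for $j$ ascendant from $i$, and that the exponent of each monomial is the indicator function of an order ideal of the corresponding \emph{ascendant tree}, every such ideal being hit exactly once. The cleanest way to run this induction is representation-theoretic: in the hereditary cluster category $\DT$ is the shift, $F_i$ is the $F$-polynomial of a thin indecomposable supported on the ascendant set of $i$, and its quiver Grassmannians are iterated projective-line bundles whose Euler characteristics literally enumerate the ideals of the tree. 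This proves Theorem \ref{thm: DT for acyclic quivers}.

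\textbf{Surface type and triples of flags.} For surface type I would combine a reddening sequence for surface quivers (available by \cite{Mills}) with the snake-graph/perfect-matching description of cluster variables associated to arcs. The key observation is that when the arc is \emph{admissible} the snake graph of its $\DT$-image degenerates to one of the rigid ladder/fan shapes, so that its perfect matchings biject with the order ideals of the poset whose Hasse diagram is drawn in Figure \ref{fig: Hasse diagrams for surface quivers}; the height function of a matching reads off the $y$-monomial, exhibiting $F_i$ as the claimed ideal function and yielding Theorem \ref{thm: DT for surfaces}. For the triple-of-flags quiver I would instead use the geometric $\DT$ transformation on $\Conf_3\mathcal{B}$ from \cite{GS2}, for which the action on cluster variables is explicit; feeding it into the separation formula (Theorem \ref{thm: separation formula}), the monomials of $F_i$ organize into plane partitions fitting in a box -- the order ideals of a three-dimensional Young diagram -- which proves Theorem \ref{thm: DT for triple of flags}. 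I expect the surface case to be the main obstacle: pinning down exactly which arcs are admissible and verifying, case by case over the configurations behind Figure \ref{fig: Hasse diagrams for surface quivers}, that the snake graph really does collapse to the stated shape with no residual cancellation is the heaviest part of the theorem.

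\textbf{Truncation and extension.} Both corollaries would follow from compatibility of reddening sequences with the two quiver operations. For a full subquiver obtained by deleting a set of vertices, freezing those vertices leaves the mutable exchange matrix unchanged, so the reddening sequence transports; on $F$-polynomials, deleting a vertex specializes the associated variable so as to collapse precisely the poset elements carrying that label, which is exactly the poset truncation of Corollary \ref{cor: truncation}. For a triangular extension in which every connecting arrow points from the first block $Q_1$ to the second block $Q_2$, concatenating a reddening sequence of $Q_2$ after one of $Q_1$ is again a reddening sequence, and an induction along the concatenation shows that the $F$-polynomials of $Q_1$-vertices are unchanged while those of $Q_2$-vertices acquire exactly the new poset elements contributed by the connecting arrows, giving Corollary \ref{cor: extension}. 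The one delicate point here is bookkeeping the $\hat y$-variables through the concatenation, which is routine but notation-heavy.
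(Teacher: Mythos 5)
Your truncation argument has a genuine gap. You claim that ``freezing those vertices leaves the mutable exchange matrix unchanged, so the reddening sequence transports,'' but this is precisely what fails: the paper explicitly warns that a reddening sequence on $Q$ need not restrict to one on a full subquiver $Q'$, and conversely a reddening sequence for $Q'$, run on $Q$ with the complementary vertices frozen, is not a reddening sequence for $Q$, so its $F$-polynomials along the way are not the $F$-polynomials of $\DT_Q$ and the needed comparison does not come for free. The correct statement is Proposition \ref{prop: DT for subquivers}, that the $F$-polynomials of $\DT_{Q'}$ are obtained from those of $\DT_Q$ by setting $X_j=0$ for $j\notin I'$, and proving it requires a real idea: the paper degenerates the cluster scattering diagram by replacing the initial wall functions $1+X_j$ with $1+tX_j$ for $j\notin I'$, observes $\DT_t(A_i)$ is polynomial in $t$, and takes $t\to 0$. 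Without some argument of this strength (or an equivalent route through $\mathbf{g}$-fans), specialization-to-zero does not follow from reddening-sequence bookkeeping alone. Similarly, your triple-of-flags sketch leaves out the actual obstruction: the formula $\DT(X_f)=X_f^{-1}\prod_g\Phi_g^{\epsilon_{fg}}$ matches the shape of the separation formula, but since $\epsilon$ for $Q_n$ is not invertible one cannot conclude $F_f=\Phi_f$ directly; the paper has to produce an explicit maximal green sequence and run a two-layer induction (first the outer vertices via Lemma \ref{lem: right most vertices}, then a sweep using rotational symmetry and \eqref{eq: DT F polynomial}).

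For the first two rows of the table your routes are genuinely different from the paper's and, if carried out, would be interesting alternatives. For acyclic quivers the paper does not use cluster categories or quiver Grassmannians at all: it peels off a source vertex, views $Q$ as a triangular extension of a singleton by the remaining acyclic quiver, and applies Proposition \ref{prop: F-polynomial for triangular extensions}; the ascendant tree then grows one layer at a time. Your hereditary-category argument would likely work and would give an a priori representation-theoretic meaning to the ideal count, but it is heavier machinery than needed. For surface type the paper does not touch snake graphs or perfect matchings: it reduces the two-puncture case to an explicit six-step local mutation sequence on a five-vertex full subquiver, computes the base case via Proposition \ref{prop: DT for adjacent clustsers} and the acyclic theorem, propagates by the insertion lemma (Lemma \ref{lem: insertion}), and then obtains the boundary cases by attaching collar strips and invoking the subquiver degeneration of Proposition \ref{prop: DT for subquivers}. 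Your snake-graph approach is plausible but would require an independent proof that the admissible snake graphs collapse to the claimed ladder/fan shapes, which is no shorter than the paper's direct computation.

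Finally, your extension argument is essentially the paper's: concatenating reddening sequences via \cite{CL}, observing that the $Q_1$-vertices' $F$-polynomials are settled by the first half, and tracking how the intermediate $X$-variables of $Q_2$-vertices pick up the $\Phi$-factors from $Q_1$. The one thing to make precise, as the paper does, is why the $C$-vectors of $Q_2$-vertices remain standard basis vectors throughout $\bk_1$ (sign coherence), so that the substitution $X_j\rightsquigarrow X_j\prod_kF'^{\delta_{kj}}_k$ is exactly the intermediate separation formula.
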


Because Theorem \ref{thm: main theorem} relates the $F$-polynomials of DT transformations to ideal functions of simply-labeled pointed posets, the $F$-polynomial facts about positive coefficients and constant term $1$ become apparent.

\begin{exmp} As a demonstration, consider the following quiver on the left in Figure \ref{fig: first example}. Note that the full subquiver spanned by the vertices $\{0,1,\dots, 6\}$ is a full subquiver of the quiver associated with a triple of flags in $\bC^6$, the full subquiver spanned by the vertices $\{7,8,9\}$ is a quiver of surface type, and the whole quiver is a triangular extension of these two full subquivers. Thus, by applying Theorem \ref{thm: main theorem}, we can write down a labeled poset, whose ideal function is the $F$-polynomial of DT, for every vertex in this quiver. For example, such a labeled poset for the vertex $0$ is drawn on the right in Figure \ref{fig: first example}. Note that it is obtained by attaching three linear chains to a cube-shaped 3d Young diagram poset.
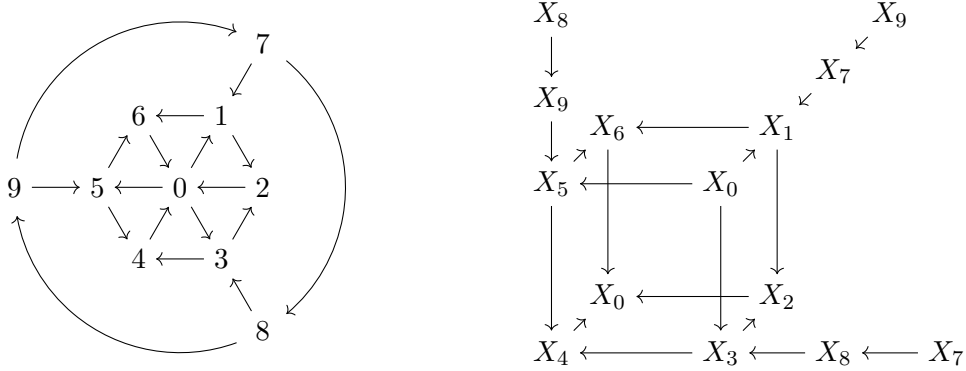
\begin{figure}[H]
    \centering
    \begin{tikzpicture}[baseline=0,scale=1.1]
    \foreach \i in {1,...,6}
    {
    \node (\i) at (120-\i*60:1) [] {$\i$};
    }
    \node (0) at (0,0) [] {$0$};
    \foreach \i in {7,8,9}
    {
    \node (\i) at (180-\i*120:2) [] {$\i$};
    }
    \draw [->] (2) -- (0);
    \draw [->] (0) -- (5);
    \draw [->] (6) -- (0);
    \draw [->] (0) -- (3);
    \draw [->] (4) -- (0);
    \draw [->] (0) -- (1);
    \draw [->] (1) -- (6);
    \draw [->] (5) -- (6);
    \draw [->] (1) -- (2);
    \draw [->] (3) -- (2);
    \draw [->] (3) -- (4);
    \draw [->] (5) -- (4);
    \draw [->] (7) -- (1);
    \draw [->] (8) -- (3);
    \draw [->] (9) -- (5);
    \draw [->] (50:2) arc (50:-50:2);
    \draw [->] (-70:2) arc (-70:-170:2);
    \draw [->] (170:2) arc (170:70:2);
    \end{tikzpicture} \hspace{2cm}
    \begin{tikzpicture}[baseline=20, scale=0.75]
        \node (0) at (-1,-1) [] {$X_0$};
        \node (4) at (-2,-2) [] {$X_4$};
        \node (3) at (1,-2) [] {$X_3$};
        \node (2) at (2,-1) [] {$X_2$};
        \node (5) at (-2,1) [] {$X_5$};
        \node (6) at (-1,2) [] {$X_6$};
        \node (1) at (2,2) [] {$X_1$};
        \node (00) at (1,1) [] {$X_0$};
        \node (17) at (3,3) [] {$X_7$};
        \node (19) at (4,4) [] {$X_9$};
        \node (38) at (3,-2) [] {$X_8$};
        \node (37) at (5,-2) [] {$X_7$};
        \node (59) at (-2,2.5) [] {$X_9$};
        \node (58) at (-2,4) [] {$X_8$};
        \draw [->] (2) -- (0);
    \draw [->] (00) -- (5);
    \draw [->] (6) -- (0);
    \draw [->] (00) -- (3);
    \draw [->] (4) -- (0);
    \draw [->] (00) -- (1);
    \draw [->] (1) -- (6);
    \draw [->] (5) -- (6);
    \draw [->] (1) -- (2);
    \draw [->] (3) -- (2);
    \draw [->] (3) -- (4);
    \draw [->] (5) -- (4);
    \draw [->] (19) -- (17);
    \draw [->] (17) -- (1);
    \draw [->] (37) -- (38);
    \draw [->] (38) -- (3);
    \draw [->] (58) -- (59);
    \draw [->] (59) -- (5);
    \end{tikzpicture}
    \caption{Example of a quiver and the labeled poset that describes the $F$-polynomial of DT for one of the quiver vertices.}
    \label{fig: first example}
\end{figure}
\end{exmp}

\begin{exmp} Based on an idea of Shen, Zhou \cite{Zhou} studies how DT transformations are related under quiver folding. By combining our result with Zhou's result, we can even write down the $F$-polynomials of the DT transformation on the Markov quiver (left picture in Figure \ref{fig: markov}), which is known to not have a reddening sequence. The trick is to observe that there is a 2-to-1 covering from the quiver associated with a 4-punctured sphere (middle picture in Figure \ref{fig: markov}) to the Markov quiver. Note that the covering quiver admits a reddening sequence and Theorem \ref{thm: main theorem} is applicable to all vertices of this covering quiver. Then to obtain the $F$-polynomials of DT for the Markov quiver, we just need to substitute the labelings of the poset by their counterpart in the folded quiver (Markov quiver). We draw the Hasse diagram for one such labeled poset on the right in Figure \ref{fig: markov}. The $F$-polynomial of DT for the other two vertices can be obtained by cyclic symmetry.
\begin{figure}[H]
    \centering
    \begin{tikzpicture}
        \foreach \i in {1,2,3}
        {
        \node at (90-\i*120:1.5) [] {$\i$};
        \draw [->] (80-\i*120:1.4) -- (-20-\i*120:1.4);
        \draw [->] (85-\i*120:1.1) -- (-25-\i*120:1.1);
        }
    \end{tikzpicture}\hspace{2cm}
    \begin{tikzpicture}[scale=1.3]
        \node (11) at (0,1) [] {$1$};
        \node (12) at (0,-1) [] {$1'$};
        \node (21) at (-1.25,-0.25) [] {$2$};
        \node (22) at (1.25,0.25) [] {$2'$};
        \node (31) at (-0.5,0.25) [] {$3$};
        \node (32) at (0.5,-0.25) [] {$3'$};
        \foreach \i in {1,2}
        {
        \foreach \j in {1,2}
        {
        \draw[->] (1\i) -- (2\j);
        \draw [->] (2\i) -- (3\j);
        \draw [->] (3\i) -- (1\j);
        }
        }
    \end{tikzpicture}\hspace{2cm}
    \begin{tikzpicture}
        \node (0) at (0,-0.6) [] {$X_1$};
        \node (31) at (-1,0) [] {$X_3$};
        \node (32) at (1,0) [] {$X_3$};
        \node (21) at (-1,1) [] {$X_2$};
        \node (22) at (1,1) [] {$X_2$};
        \node (1)  at (0,1.6) [] {$X_1$};
        \foreach \i in {1,2}
        {
        \draw [->] (3\i) -- (0);
        \draw [->] (1) -- (2\i);
        \foreach \j in {1,2}
        {
        \draw [->] (2\i) -- (3\j);
        }
        }
    \end{tikzpicture}
    \caption{Labeled posets for $F$-polynomials of DT on the Markov quiver.}
    \label{fig: markov}
\end{figure}
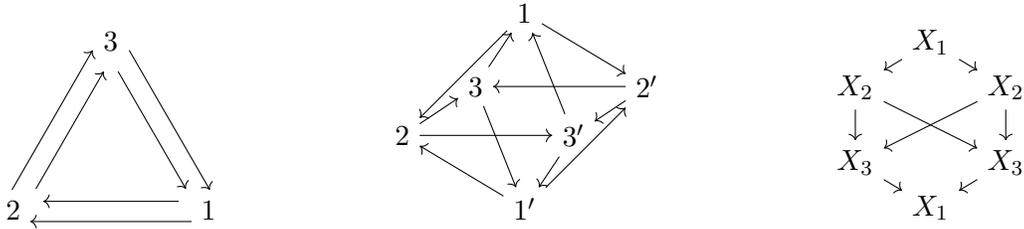
\end{exmp}

\subsection*{Acknowledgement} We would like to thank Alexander Goncharov for his guidance and encouragement in our study of the DT transformation. We would also like to thank Linhui Shen for the discussion on the relations between DT transformations of subquivers/folded quivers and the original quiver.

\section{Ideal Functions from Labeled Posets}

A \emph{poset} (short for ``partially ordered set'') is a finite set $P$ with a partial order $\leq$. A poset is said to be \emph{pointed} if there is a unique minimal element in $P$. 

\begin{defn}\label{defn: ideals} An \emph{ideal} of a poset $P$ is a subset $I\subset P$ satisfying
\[
(i\in I \text{ and } j\leq i)\implies (j\in I).
\]
Note that we allow ideals to be empty. Given a collection of elements in $P$, the ideal \emph{generated} by these elements is the intersection of all ideals containing these elements. We denote the set of ideals in a poset $P$ by $\cI(P)$.
\end{defn}

Let $\bX$ be a set of formal commuting variables, and let $\mathbb{Q}(\bX)$ be the field of rational functions with rational coefficients in the variables in $\bX$. 

\begin{defn}\label{defn: labeled posets} An \emph{$\bQ(\bX)$-labeling}, or just \emph{labeling}, of a poset $P$ is a map $L:P\rightarrow \bQ(\bX)$. We do not assume this map to be injective. We call the pair $(P,L)$ a \emph{labeled poset}. If the image of the labeling $L$ lies inside the subset $\bX\subset \bQ(\bX)$, then we say $L$ is \emph{simple}. A simple poset labeled by a simple labeling is called a \emph{simply-labeled poset}. Note that when the labeling $L$ is simple, the ideal function $F_{(P,L)}$ naturally lives inside $\mathbb{Z}[\bX]$ with positive coefficients. To simplify the notation, we define $L_i:=L(i)$ for any labeling $L$.
\end{defn}

\begin{defn}\label{defn: ideal functions} Let $(P,L)$ be a labeled poset. The \emph{ideal function} associated with this labeled poset is defined to be the function
\[
F_{(P,L)}:=\sum_{I\subset \cI(P)}L_I,  \quad \text{where} \quad L_I:=\prod_{i\in I} L_i.
\]
We view $F_{(P,L)}$ as an element in $\bQ(\bX)$. In particular, the empty ideal always contributes a constant term $1$ to the ideal function.
\end{defn}

To better visualize ideal functions, we can make use of Hasse diagrams. The \emph{Hasse diagram} of a poset $P$ is a quiver whose vertex set is $P$, with an arrow from $i$ to $j$ whenever $i>j$ and there is no $k$ such that $i>k>j$. Note that Hasse diagrams are automatically acyclic quivers. Moreover, we can draw a Hasse diagram so that the arrows are always oriented downward. Now given an additional $\bQ(\bX)$-labeling $L:P\rightarrow \bQ(\bX)$, we can replace each vertex $i$ in the Hasse diagram with its image $L_i$. To write the ideal function, we just need to read from bottom up along the Hasse diagram to find all the ideals in $P$, and each ideal is going to contribute a term in the ideal function. We adopt the convention that if we represent a labeled poset with a Hasse diagram, then its ideal function is denoted by adding a square bracket around the Hasse diagram.

\begin{exmp} Consider the following simply-labeled poset. Its ideal function is
\[
\left[\begin{tikzpicture}[baseline=40]
    \node (1) at (0,0) [] {$X_1$};
    \node (2) at (1,1) [] {$X_2$};
    \node (3) at (-1,1) [] {$X_3$};
    \node (4) at (0,2) [] {$X_4$};
    \node (5) at (0,3) []{$X_1$};
    \draw [->] (5) -- (4);
    \draw [->] (4) -- (3);
    \draw [->] (4) -- (2);
    \draw [->] (2) -- (1);
    \draw [->] (3) -- (1);
    \end{tikzpicture}\right]=1+X_1+X_1X_2+X_1X_3+X_1X_2X_3+X_1X_2X_3X_4+X_1^2X_2X_3X_4.
\]
\end{exmp}

Let us prove some basic lemmas about ideal functions.

\begin{lem}\label{lem: formula for pointed labeled posets} Let $(P,L)$ be a labeled poset and suppose $P$ is pointed. Let $P'$ be obtained from $P$ by removing the unique minimal element $i_{\min}$. Let $(P',L')$ be the labeled poset obtained by restricting the partial order and the labeling to $P'$. Then
\[
F_{(P,L)} = 1+L_{i_{\min}}F_{(P',L')}.
\]
\end{lem}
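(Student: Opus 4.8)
The plan is to partition the set of ideals $\cI(P)$ according to whether or not they contain the unique minimal element $i_{\min}$, and then to set up a bijection between the two resulting pieces and the objects counted by the right-hand side. First I would observe that since $i_{\min}$ is the unique minimal element of $P$, it is a submember of every nonempty ideal: indeed, any nonempty ideal $I$ contains some element $i$, and since $i_{\min}\le i$ in the pointed poset, the ideal axiom forces $i_{\min}\in I$. Consequently the only ideal of $P$ not containing $i_{\min}$ is the empty ideal, which contributes the constant term $1$ to $F_{(P,L)}$.

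Next I would analyze the ideals that do contain $i_{\min}$. The key claim is that the assignment $I\mapsto I\setminus\{i_{\min}\}$ is a bijection from $\{I\in\cI(P): i_{\min}\in I\}$ to $\cI(P')$, with inverse $J\mapsto J\cup\{i_{\min}\}$. For the forward direction, if $I$ is an ideal of $P$ containing $i_{\min}$, then $I':=I\setminus\{i_{\min}\}$ is downward closed in $P'$: if $j\le i$ in $P'$ with $i\in I'$, then $j\in I$ by downward closure in $P$, and $j\ne i_{\min}$ since $i_{\min}\notin P'$, so $j\in I'$. Conversely, given an ideal $J$ of $P'$, the set $J\cup\{i_{\min}\}$ is downward closed in $P$ because any element below something in $J$ is either $i_{\min}$ itself or lies in $P'$ and hence in $J$. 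These two maps are visibly mutually inverse, establishing the bijection.

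Finally I would translate the bijection into the monomial identity. For an ideal $I$ with $i_{\min}\in I$, writing $I' = I\setminus\{i_{\min}\}$ and using that $L_{i_{\min}}$ and the $L'$ agree with the restriction of $L$, we have $L_I = L_{i_{\min}}\cdot L_{I'} = L_{i_{\min}}\cdot L'_{I'}$. Summing over all such $I$ and reindexing by $I'\in\cI(P')$ via the bijection gives $\sum_{I\ni i_{\min}} L_I = L_{i_{\min}}\sum_{I'\in\cI(P')} L'_{I'} = L_{i_{\min}} F_{(P',L')}$. Adding back the contribution $1$ from the empty ideal yields $F_{(P,L)} = 1 + L_{i_{\min}} F_{(P',L')}$, as claimed.

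I do not anticipate a serious obstacle here; the only point requiring care is verifying that the two set maps genuinely land in $\cI(P')$ and $\cI(P)$ respectively (i.e., checking downward closure in both directions), and making sure the labelings are compatible under restriction so that the monomial factorization $L_I = L_{i_{\min}} L'_{I'}$ is literally an equality in $\bQ(\bX)$ rather than just a formal correspondence. This is routine once one is careful that $i_{\min}\notin P'$.
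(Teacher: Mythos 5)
Your proof is correct and takes essentially the same approach as the paper's one-line argument: every nonempty ideal of a pointed poset contains $i_{\min}$, and removing $i_{\min}$ gives a bijection with $\cI(P')$, from which the monomial identity follows. You have simply spelled out the details (the bijection and the downward-closure checks) that the paper leaves implicit.
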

\begin{proof} It follows from the fact that any non-empty ideal in $P$ contains the minimal element $i_{\min}$.
\end{proof}

\begin{lem}\label{lem: insertion} 
Let $(P,L)$ be a labeled poset. Let $j<i$ be a pair of vertices in $P$. Let $P':=P\sqcup\{k\}$ and we extend the partial order to $P'$ by declaring $j<k<i$. We choose an invertible element $L_0\in \bQ(\bX)$ such that $1+L_0$ is also invertible, and define a new labeling $L'$ on $P'$ by 
\[
L'_s:=\left\{\begin{array}{ll}
    L_j(1+L_0)^{-1} &  \text{if $s=j$},\\
    L_0 & \text{if $s=k$}, \\
    L_i(1+L_0^{-1}) & \text{if $s=i$},\\
    L_s & \text{otherwise}.
\end{array}\right.
\]
Then
\[
F_{(P',L')}=F_{(P,L)}.
\]
\end{lem}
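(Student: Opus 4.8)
The plan is to compare $\cI(P')$ with $\cI(P)$ directly and then match weights term by term. First I would note that restriction, $I'\mapsto I'\cap P$, carries ideals of $P'$ to ideals of $P$, since the order on $P$ is induced from that on $P'$. Because the order on $P'$ is generated by the order on $P$ together with the relations $j<k<i$, one checks that in $P'$ the set of elements strictly below $k$ is $\{s\in P: s\le j\}$ and the set strictly above $k$ is $\{s\in P: s\ge i\}$. Hence, for $I'\in\cI(P')$ with image $I=I'\cap P$: (a) if $i\in I$ then necessarily $k\in I'$, so $I'=I\cup\{k\}$; (b) if $j\notin I$ then necessarily $k\notin I'$, and also $i\notin I$, so $I'=I$; (c) if $j\in I$ but $i\notin I$, then both $I$ and $I\cup\{k\}$ lie in $\cI(P')$ and restrict to $I$, and these are the only two. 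Writing
\[
\mathcal{A}=\{I\in\cI(P): j\notin I\},\qquad \mathcal{B}=\{I\in\cI(P): j\in I,\ i\notin I\},\qquad \mathcal{C}=\{I\in\cI(P): i\in I\},
\]
this says $\cI(P')$ is in bijection with $\mathcal{A}\sqcup\mathcal{B}\sqcup\mathcal{B}\sqcup\mathcal{C}$, the two copies of $\mathcal{B}$ recording whether $k$ is included.

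Next I would compute the weights. Since $L'$ and $L$ agree off $\{j,k,i\}$, only the modified labels affect $L'_{I'}/L_I$. For $I\in\mathcal{A}$ the ideal $I'=I$ meets none of $j,k,i$, so its weight is $L_I$. For $I\in\mathcal{B}$ the ideal $I$ contributes $L_I L'_j/L_j = L_I(1+L_0)^{-1}$ and the ideal $I\cup\{k\}$ contributes $L_I(1+L_0)^{-1}L_0$, for a total of $L_I(1+L_0)^{-1}(1+L_0)=L_I$. For $I\in\mathcal{C}$ the ideal $I\cup\{k\}$ contains all of $j,k,i$ and contributes
\[
L_I\cdot\frac{L'_j}{L_j}\cdot L'_k\cdot\frac{L'_i}{L_i}=L_I\cdot(1+L_0)^{-1}\cdot L_0\cdot(1+L_0^{-1})=L_I,
\]
using $(1+L_0)^{-1}L_0(1+L_0^{-1})=(1+L_0)^{-1}(L_0+1)=1$. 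Summing over the three classes yields $F_{(P',L')}=\sum_{I\in\cI(P)}L_I=F_{(P,L)}$.

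The argument is essentially bookkeeping; the one place where care is genuinely needed is the order-theoretic claim in the first paragraph — that inserting $k$ between $j$ and $i$ forces precisely the membership pattern $j\notin I'\Rightarrow k\notin I'$ and $i\in I'\Rightarrow k\in I'$ and nothing more — which is what makes the description of $\cI(P')$ above correct, and in particular why $I$ and $I\cup\{k\}$ are both genuine ideals of $P'$ in case (c). The invertibility hypotheses on $L_0$ and $1+L_0$ are used exactly to keep $L'_j$ and $L'_i$ well-defined in $\bQ(\bX)$ and to justify the cancellations $(1+L_0)^{-1}(1+L_0)=1$ and $(1+L_0)^{-1}L_0(1+L_0^{-1})=1$. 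I do not foresee any serious obstacle beyond this.
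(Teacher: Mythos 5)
Your proposal is correct and follows essentially the same approach as the paper: partition $\cI(P)$ into the three classes according to whether $j\notin I$, $j\in I$ but $i\notin I$, or $i\in I$ (equivalently, both $i,j\in I$), match each class with the corresponding ideals of $P'$, and verify that the weights agree term by term; your computation of the telescoping factors $(1+L_0)^{-1}(1+L_0)=1$ and $(1+L_0)^{-1}L_0(1+L_0^{-1})=1$ is identical to the paper's. The only difference is that you spell out the order-theoretic justification (which elements lie above and below $k$ in $P'$, and why both $I$ and $I\cup\{k\}$ are genuine ideals in the middle case) slightly more explicitly than the paper does.
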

\begin{proof} The ideals in $P$ belong to three mutually exclusive families.

For those ideals $I$ in $P$ that do not contain $j$, they are also ideals in $P'$. It follows from the construction of $P'$ that $L_I=L'_I$ for these ideals.

For those ideals $I$ in $P$ that contains $j$ but not $i$, each of them corresponds to two ideals in $P'$, namely $I$ and $I\sqcup\{k\}$. By construction, we have
\[
L'_I+L'_{I\sqcup\{k\}}=L_I(1+L_0)^{-1}+L_I(1+L_0)^{-1}L_0=L_I.
\]

For any ideal $I$ in $P$ that contains both $i$ and $j$, $I\sqcup\{k\}$ is an ideal in $P'$, and we have
\[
L'_{I\sqcup\{k\}}=L_I(1+L_0)^{-1}L_0(1+L_0^{-1})=L_I.
\]

Note that these three cases exhaust all ideals in $P'$ as well. This implies $F_{(P',L')}=F_{(P,L)}$.
\end{proof}

Note that Lemma \ref{lem: insertion} can be used in the reverse direction as well. If $i\rightarrow k\rightarrow j$ are arrows in a Hasse diagram $H$ of a labeled poset, with $k$ not connecting to any other vertex, then we can remove $k$ from the poset $P$ and relabel the vertices $i$ and $j$ without changing the ideal function.

\bigskip

If the labeling map $L$ labels some poset elements as $0$, then we can further simplify the labeled poset without changing the ideal function.

\begin{defn} Let $P$ be a poset. The \emph{opposite poset} $P^\op$ is the poset with the same underlying set as $P$ but with the opposite order. An ideal in $P^\op$ is called an \emph{anti-ideal} in $P$. Given a collection of elements in $P$, the anti-ideal \emph{generated} by these elements is the same as the ideal generated by the same collection of elements in $P^\op$.
\end{defn}

\begin{lem}\label{lem: setting to 0} Suppose $(P,L)$ is a labeled poset and suppose $S\subset P$ such that $L|_S$ is the constant zero map. Let $P_S$ be the complement to the anti-ideal generated by $S$ and let $L_S:=L|_{P_S}$. Then 
\[
F_{(P,L)}=F_{(P_S,L_S)}.
\]
\end{lem}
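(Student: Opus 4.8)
The plan is to exhibit a bijection between $\cI(P_S)$ and the subset of $\cI(P)$ consisting of ideals avoiding $S$, and then argue that ideals meeting $S$ contribute nothing to $F_{(P,L)}$. First I would observe that if $I \in \cI(P)$ and $I \cap S \neq \emptyset$, then $L_I = \prod_{i \in I} L_i = 0$ because one of the factors is $L_s = 0$ for some $s \in S$; hence
\[
F_{(P,L)} = \sum_{\substack{I \in \cI(P) \\ I \cap S = \emptyset}} L_I.
\]
So it suffices to identify the ideals of $P$ disjoint from $S$ with the ideals of $P_S$ in a way that preserves the monomial $L_I$.

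Next I would unpack what $P_S$ is: let $A$ denote the anti-ideal of $P$ generated by $S$, i.e.\ the set of all $p \in P$ with $p \geq s$ for some $s \in S$; then $P_S = P \setminus A$, with the induced order, and $L_S = L|_{P_S}$. The key combinatorial point is the claim that for an ideal $I \in \cI(P)$, we have $I \cap S = \emptyset$ if and only if $I \cap A = \emptyset$, i.e.\ $I \subseteq P_S$. One direction is trivial since $S \subseteq A$. For the other, suppose $I \cap S = \emptyset$ but some $p \in I \cap A$; then $p \geq s$ for some $s \in S$, and since $I$ is an ideal and $s \leq p \in I$, we get $s \in I$, contradicting $I \cap S = \emptyset$. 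Therefore the ideals of $P$ disjoint from $S$ are exactly the subsets $I \subseteq P_S$ that are ideals of $P$.

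It remains to check that a subset $I \subseteq P_S$ is an ideal of $P$ if and only if it is an ideal of $P_S$. If $I$ is an ideal of $P$, then it is automatically downward closed in the induced order on $P_S$. Conversely, if $I$ is an ideal of $P_S$, I must verify downward closure in all of $P$: take $i \in I$ and $j \leq i$ in $P$; since $I \subseteq P_S$ and $P_S = P \setminus A$ where $A$ is an anti-ideal (upward closed), and $j \leq i$ with $i \notin A$, we have $j \notin A$, so $j \in P_S$, and then $j \leq i$ within $P_S$ forces $j \in I$. This establishes the bijection $\cI(P_S) \leftrightarrow \{I \in \cI(P) : I \cap S = \emptyset\}$, and since the labeling on $P_S$ is the restriction of $L$, the monomials $L_I$ match on both sides. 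Summing gives $F_{(P_S, L_S)} = \sum_{I \in \cI(P_S)} L_I = \sum_{I \in \cI(P),\, I \cap S = \emptyset} L_I = F_{(P,L)}$.

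The only mild subtlety — and the one place to be careful — is the interplay between "anti-ideal" (upward closed) and "ideal" (downward closed): one must keep straight that removing an upward-closed set from $P$ leaves a set that is itself downward closed in $P$, which is what makes the restriction of the order on $P_S$ behave well and makes the two notions of "ideal" coincide on subsets of $P_S$. Beyond that bookkeeping, the argument is routine.
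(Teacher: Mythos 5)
Your proof is correct and follows essentially the same approach as the paper's: observe that ideals meeting $S$ contribute zero, show that the remaining ideals are precisely those disjoint from the anti-ideal $A_S$, and identify these with the ideals of $P_S$. You simply spell out more carefully the two bijection steps (ideals disjoint from $S$ equal ideals disjoint from $A_S$; ideals of $P$ contained in $P_S$ equal ideals of $P_S$) that the paper compresses into a single sentence.
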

\begin{proof} Let $A_S$ be the anti-ideal generated by $S$. If an ideal $I$ in $P$ intersects $A_S$ non-trivially, then $I$ must contain some element $i\in S$. Since $L|_S=0$, this implies that $L_I=0$. Thus, the only non-zero terms in $F_{(P,L)}$ correspond to ideals $I$ that are disjoint from $A_S$. But they are the same as ideals in $P_S$. Therefore we can conclude that $F_{(P,L)}=F_{(P_S,L_S)}$.
\end{proof}

\section{Overview of Quivers and Cluster Algebras}

Within this article, we always assume quivers do not have oriented $1$-cycles or oriented $2$-cycles. We denote the set of vertices of $Q$ by $I$ and record the arrows in $Q$ by its \emph{exchange matrix} $\epsilon$, an $I\times I$ skew-symmetric matrix whose entries are
\[
\epsilon_{ij}:=\#(i\rightarrow j)-\#(j\rightarrow i).
\]

Given a vertex $k$ of $Q$, we can \emph{mutate} $Q$ in the direction of $k$ to get a new quiver $Q'$ with the same set of vertices, but with its exchange matrix changed to
\[
\epsilon'_{ij}=\left\{\begin{array}{ll} -\epsilon_{ij} & \text{if $k\in \{i,j\}$},\\
\epsilon_{ij}+[\epsilon_{ik}]_+[\epsilon_{kj}]_+-[-\epsilon_{ik}]_+[-\epsilon_{kj}]_+ & \text{otherwise.}
\end{array}\right.
\]
Here $[a]_+:=\max\{a,0\}$. Quivers obtained from $Q$ via a sequence of mutations are said to be \emph{mutation equivalent} to $Q$. Mutation equivalence is an equivalence relation, and we denote it by $\sim$.

The quiver with \emph{principal coefficient} associated with $Q$ is a quiver $\tilde{Q}$ constructed from $Q$ by adding a single frozen vertex $i'$ for each vertex $i$ in $Q$, together with a single arrow pointing from $i$ to $i'$. Note that we are not allowed to mutate at frozen vertices. The exchange matrix of $\tilde{Q}$ is hence of the form $\begin{pmatrix} \epsilon & \id \\ -\id & 0\end{pmatrix}$. Now for a given sequence of quiver vertices $\mathbf{k}=(k_1,k_2,\dots, k_l)$ of $Q$, we can mutate the quiver $\tilde{Q}$ along this sequence and obtain another quiver $\tilde{Q}'=\mu_\bk(\tilde{Q})=\mu_{k_l}\circ\mu_{k_{l-1}}\circ \cdots \circ \mu_{k_1}(\tilde{Q})$. The exchange matrix of $\tilde{Q}'$ is of the form $\begin{pmatrix} \epsilon' & c \\ -c & \ast\end{pmatrix}$, where $\epsilon'$ is the exchange matrix of the quiver $\mu_\bk(Q)$. The $I\times I$ matrix $c$ is called the \emph{$C$-matrix} associated with $Q$ and the mutation sequence $\bk$. The row vectors of the $C$-matrix are called \emph{$C$-vectors}. In particular, the matrix $c$ satisfies an exceptional sign coherence property; this property was conjectured by Fomin and Zelevinsky in \cite{FZIV}, proved in the skew-symmetric case by Derksen, Weyman, and Zelevinsky and in the full generality by Gross, Hacking, Keel, and Kontsevich.

\begin{thm}[Sign coherence \cite{DWZ,GHKK}]\label{thm: sign coherence} For each $C$-vector $c_i$, either all entries in that row vector are non-negative or all entries in that row vector are non-positive.
\end{thm}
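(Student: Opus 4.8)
We describe two routes, corresponding to the two attributions in the statement. The first suffices for skew-symmetric $\epsilon$, which covers every quiver in this article, and follows Derksen--Weyman--Zelevinsky. One attaches to $Q$ a \emph{non-degenerate} potential $W$, i.e.\ a possibly infinite linear combination of oriented cycles such that $W$ and all of its iterated mutations have finite-dimensional Jacobian algebra; such $W$ exist over any infinite field. Mutation of seeds is then categorified by the \emph{decorated representations} of $(Q,W)$, the pairs $\cM=(M,M^-)$ of a finite-dimensional Jacobian module $M$ and an $I$-graded vector space $M^-$, which DWZ endow with a mutation operation $\mu_k$ lifting quiver mutation. Applying $\mu_{k_\ell}\cdots\mu_{k_1}$ to the simple decorated representations produces a family $\cM_1,\dots,\cM_{|I|}$, and the first task is to show, by an induction on $\ell$ comparing at each step the recursion for the $C$-matrix obtained from mutating $\tilde{Q}$ with the mutation rule for decorated representations, that $c_i=\pm\underline{\dim}M_i$ for an indecomposable rigid module $M_i$ over the Jacobian algebra of $(Q,W)$ — the sign being $+$ when $\cM_i$ carries no decoration, so that $M_i$ is the module part of $\cM_i$, and $-$ otherwise. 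Since a dimension vector is componentwise nonnegative, sign coherence of $c_i$ then follows at once; in the acyclic case $W=0$ this is the classical statement that the $C$-vectors are exactly the real Schur roots of $\bk Q$ and their negatives.

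The substance, and what I expect to be the main obstacle, is the realization itself: proving that the $C$-matrix recursion never outputs a vector with entries of both signs. This is delicate because the recursion $c_i\mapsto c_i+[\pm\epsilon_{ik}]_+\,c_k$ (the sign in the bracket dictated by the sign of $c_k$) would mix signs inside $c_i$ unless the coefficient $[\pm\epsilon_{ik}]_+$ vanishes whenever $c_i$ and $c_k$ point opposite ways, and it is precisely the decorated-representation theory, through the DWZ $E$-invariant, that forces this. One checks that the $E$-invariant vanishes on the relevant initial decorated representations, that it is preserved along the mutation sequence (which needs every mutation in the sequence to be defined and the non-degenerate Jacobian algebras to remain finite-dimensional), and that a decorated representation on which the $E$-invariant vanishes has a sign-coherent dimension vector; this last point comes down to exactness, away from its middle term, of the three-term complex built from the arrows at $k$ that defines $\mu_k$, together with the fact that vanishing of the $E$-invariant obstructs the kernel/cokernel correction in the mutation rule from producing cancellation, with non-degeneracy of $W$ forbidding the $2$-cycles whose appearance would break this. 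The recursion-comparison in the previous paragraph is then routine.

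For an arbitrary exchange matrix $\epsilon$, when no quiver-with-potential model is available, one argues instead with the consistent cluster scattering diagram $\fD$ built from the corresponding fixed data. Because the wall-crossing automorphisms attached to the walls of $\fD$ are exponentials of elements graded by the positive cone, every wall of $\fD$ is supported in a hyperplane $v^\perp$ all of whose coordinates $v_i$ are nonnegative; and for the seed reached by $\bk$ the rows of $C$ are, up to sign, the primitive normal vectors of the walls bounding the corresponding \emph{cluster chamber}. Hence every $c_i$ has all of its coordinates of a single sign, which is the assertion. Equivalently, via the Nakanishi--Zelevinsky tropical duality relating the $C$- and $G$-matrices by transposition up to sign, sign coherence of $C$-vectors is equivalent to the $\mathbf{g}$-vectors forming an honest simplicial fan, and the latter is part of GHKK's structure theorem for $\fD$.
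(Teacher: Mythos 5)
The paper does not prove this statement: it is a blackbox citation to Derksen--Weyman--Zelevinsky and Gross--Hacking--Keel--Kontsevich, presented with only the one-sentence attribution in the surrounding text. So there is no in-paper argument to compare your proposal against, and the question is simply whether your sketch of the two cited proofs is sound. Broadly it is, and the high-level architecture of both routes is reported faithfully: for skew-symmetric $\epsilon$, categorify via a non-degenerate QP and decorated representations, identify $C$-vectors with (negatives of) dimension vectors, and use the mutation-invariance of the $E$-invariant to prevent sign-mixing; for general $\epsilon$, read off $C$-vectors as normals of cluster-chamber walls in the consistent scattering diagram, where all walls are supported in hyperplanes with sign-coherent normals since wall functions are graded by the positive cone, with Nakanishi--Zelevinsky duality linking this to $G$-fan simpliciality.

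Two caveats worth recording. First, DWZ's existence theorem for non-degenerate potentials is stated over an \emph{uncountable} field (a Baire-category argument over the space of potentials), not merely an infinite one as you wrote; this is harmless for the cluster applications in the paper but is an overclaim as stated. Second, DWZ themselves prove sign coherence of $G$-vectors (FZ~Conjecture~5.4) and the $C$-vector statement is then extracted by the Fomin--Zelevinsky / Nakanishi--Zelevinsky duality; the direct realization $c_i=\pm\underline{\dim}\,M_i$ for an indecomposable rigid module, which you take as the organizing step, is closer to the Nagao and Plamondon refinements of DWZ's framework than to DWZ's own route, even though it uses exactly the same machinery (decorated representations, $E$-invariant). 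Neither caveat invalidates the sketch, but both should be fixed if this were to stand in for the cited proofs.
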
 

Using this exceptional property, Keller \cite{KelDT} defines a coloring on vertices of any quiver mutation equivalent to an initial quiver $Q$; based on this coloring scheme, he further defines special mutation sequences called ``reddening sequences'' and ``maximal green sequences''.

\begin{defn} Let $Q$ be a quiver and let $\bk$ be a mutation sequence. Let $c$ be the $C$-matrix associated with the quiver $Q$ and the mutation sequence $\bk$. A vertex $i$ in $\mu_\bk(Q)$ is \emph{green} if all entries in $c_i$ are non-negative, and is \emph{red} if all entries in $c_i$ are non-positive. Note that by Theorem \ref{thm: sign coherence} any vertex in $\mu_\bk(Q)$ is either green or red.
\end{defn}

\begin{defn} A mutation sequence $\mu_\bk$ on an initial quiver $Q$ is said to be \emph{reddening} if all vertices in $\mu_\bk(Q)$ are red. A reddening sequence is said to be \emph{maximal green} if it only mutates at green vertices at each step.
\end{defn}

To each family of mutation equivalent quivers, Fomin and Zelevinsky \cite{FZI} introduce a commutative algebra called the ``cluster algebra''. To construct the cluster algebra with an initial quiver $Q$, we first associate a formal variable $A_i$ with each vertex $i$ of $Q$. Let $\bQ(\bA)$ be the field of rational functions in the set of formal variables $\bA=\{A_i\}_{i\in I}$. Now for any mutation sequence $\mu_\bk$, we will follow a recursive procedure to produce new elements in $\bQ(\bA)$. Suppose the elements we obtain for $Q'=\mu_{k_{l-1}}\circ \cdots \circ \mu_{k_1}(Q)$ are $\{A'_i\}_{i\in I}$, and the exchange matrix of $Q'$ is $\epsilon'$; then for $Q''=\mu_{k_l}(Q')$, we produce a new collection by the following \emph{mutation formula}:
\begin{equation}\label{eq: A mutation}
A''_i=\left\{\begin{array}{ll}
\displaystyle\frac{1}{A'_k}\left(\prod_j A'^{[\epsilon'_{kj}]_+}_j+\prod_jA'^{[-\epsilon'_{kj}]_+}_j\right) & \text{if $i=k$},\\
A'_i & \text{otherwise}.\end{array}\right.
\end{equation}
Such a collection of elements is called a \emph{cluster} and elements inside a cluster are called \emph{cluster variables}. 

\begin{defn}[{\cite[Definition 2.3]{FZI}}] The cluster algebra $\cA$ associated with (the family of quivers mutation equivalent to) the quiver $Q$ is the subalgebra inside $\bQ(\bA)$ generated by all cluster variables.
\end{defn}

Fock and Goncharov \cite{FGensemble} introduce a dual version of cluster algebras called the \emph{cluster Poisson algebra}. The construction of a cluster Poisson algebra is only slightly different from that of cluster algebra. We again start with a collection of formal variables $\bX=\{X_i\}_{i\in I}$, and we mutate them along mutation sequences to produce a new collection of elements in $\bQ(\bX)$ for each quiver in the mutation equivalent family. In contrast to the mutation of cluster variables \eqref{eq: A mutation}, the cluster Poisson variables mutate according to {\cite[Equation (13)]{FGensemble},\cite[Proposition 3.9]{FZIV}}:
\begin{equation}\label{eq: X mutation}
X''_i=\left\{\begin{array}{ll} X'^{-1}_k & \text{if $i=k$},\\
X'_i\left(1+X'^{-\sgn(\epsilon'_{ik})}_k\right)^{-\epsilon'_{ik}} & \text{otherwise.} \end{array}\right.
\end{equation}
For each quiver $Q'\sim Q$, we can take the Laurent polynomial ring $\bQ[X'^{\pm 1}_i]_{i\in I}$. The \emph{cluster Poisson algebra} is then defined to be the intersection
\[
\mathcal{X}:=\bigcap_{Q' \sim Q}\bQ[X'^{\pm 1}_i]_{i\in I}.
\]
Note that, unlike cluster algebras, cluster Poisson variables may not be elements inside the cluster Poisson algebra.

For any quiver $Q'\sim Q$ with an exchange matrix $\epsilon'$, we have a map $p:\bQ(\bX)\rightarrow \bQ(\bA)$ defined by 
\begin{equation} \label{eq: p map}
p(X'_i)=\prod_{j\in I}A'^{\epsilon'_{ij}}_j,
\end{equation}
where $A'_j$'s and $X_i$'s are variables associated with the quiver $Q'$. By comparing the mutation formulas \eqref{eq: A mutation} and \eqref{eq: X mutation}, it is not hard to see that the $p$ maps commute with mutations and hence there is a unique $p$ map $p:\bQ(\bX)\rightarrow \bQ(\bA)$ regardless of which quiver we use.

Let $Q'=\mu_\bk(Q)$ be a quiver mutation equivalent to $Q$. Let $A'_i$ be cluster variables and let $X'_i$ be cluster Poisson variables associated with $Q'$. Then $A'_i$ and $X'_i$ can be written in terms of the initial cluster variables and the initial cluster Poisson variables (those associated with $Q$) as follows.

\begin{thm}[Separation Formula {\cite[Proposition 3.13, Corollary 6.3]{FZIV}}]\label{thm: separation formula} There exists a collection of integers $g_{ij}$ and polynomials $F_{A'_i}(\bX)$ in the initial cluster Poisson variables $\bX$ with integer coefficients such that
\[
A'_i=\left(\prod_j A_j^{g_{ij}}\right)p(F_{A'_i}(\bX)) \quad \quad \text{and} \quad \quad X'_i=\prod_j\left(X_j^{c_{ij}}F_{A'_j}(\bX)^{\epsilon'_{ij}}\right).
\]
The integers $c_{ij}$ are entries of the corresponding $C$-matrix.
\end{thm}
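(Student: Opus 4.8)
The plan is to deduce the separation formula from the Laurent phenomenon via the principal-coefficients device of Fomin--Zelevinsky \cite{FZIV}. I would first pass to the auxiliary cluster algebra $\cA_\bullet$ built from $Q$ by adjoining principal coefficients $Y_1,\dots,Y_n$ at the initial seed with cluster $(A_1,\dots,A_n)$; by the Laurent phenomenon with coefficients, every cluster variable of $\cA_\bullet$ lies in $\bZ[Y_1,\dots,Y_n][A_1^{\pm1},\dots,A_n^{\pm1}]$. For $Q'=\mu_\bk(Q)$ I would \emph{define} $F_{A'_i}(\bX)$ to be the polynomial obtained from the principal-coefficient cluster variable $A^\bullet_i$ at vertex $i$ of $Q'$ by setting every $A_j\mapsto 1$. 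Substituting $A_j\mapsto 1$ into the exchange relation \eqref{eq: A mutation} collapses the two Laurent monomials and leaves precisely the $F$-polynomial recursion, so $F_{A'_i}(\bX)\in\bZ[\bX]$; and setting every $Y_j\mapsto 1$ sends $A^\bullet_i$ to the coefficient-free variable $A'_i$.

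The $G$-vectors then come from a $\bZ^n$-grading on $\bZ[Y][A^{\pm1}]$. Set $\deg A_i=e_i$ and let $\deg Y_j$ be the unique weight making the principal-coefficient exchange relations homogeneous; concretely this is the weight for which $Y_j\prod_k A_k^{\epsilon_{jk}}$ has degree $0$, and the verification reduces to the identity $[\epsilon_{kj}]_+-\epsilon_{kj}=[-\epsilon_{kj}]_+$. By induction on the length of $\bk$ every $A^\bullet_i$ is homogeneous, and I would \emph{define} $g_{ij}$ to be the $j$-th component of $\deg A^\bullet_i$. Homogeneity then forces every monomial $A^{\mathbf a}Y^{\mathbf m}$ occurring in $A^\bullet_i$ to have $A^{\mathbf a}=\big(\prod_j A_j^{g_{ij}}\big)\prod_j\big(\prod_k A_k^{\epsilon_{jk}}\big)^{m_j}$, whence
\[
A^\bullet_i=\Big(\prod_j A_j^{g_{ij}}\Big)\sum_{\mathbf m}c_{\mathbf m}\prod_j\Big(Y_j\prod_k A_k^{\epsilon_{jk}}\Big)^{m_j}=\Big(\prod_j A_j^{g_{ij}}\Big)F_{A'_i}\big(\hat Y_1,\dots,\hat Y_n\big),\qquad \hat Y_j:=Y_j\prod_k A_k^{\epsilon_{jk}}.
\]
Setting $Y_j\mapsto 1$ turns the left side into $A'_i$ and each $\hat Y_j$ into $\prod_k A_k^{\epsilon_{jk}}=p(X_j)$, which is exactly $A'_i=\big(\prod_j A_j^{g_{ij}}\big)\,p\big(F_{A'_i}(\bX)\big)$.

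For the cluster Poisson identity I would instead induct on the length of $\bk$ directly from \eqref{eq: X mutation}, starting from the trivial case $c=\id$ with all $F$-polynomials equal to $1$. At each step only the variable at the mutated vertex $k_l$ changes; substituting the inductive expressions into \eqref{eq: X mutation}, clearing the factor $\big(1+X_{k_l}^{\pm1}\big)$, and collecting exponents reduces the claim to matching the row-mutation rule for the $C$-matrix against the $F$-polynomial recursion from the first paragraph. I expect this last step to be the main obstacle: one has to track the $[\pm\epsilon'_{\bullet k_l}]_+$ exponents and the numerator and denominator of the $F$-recursion simultaneously and check that the two ``sign branches'' of \eqref{eq: X mutation} glue consistently, and sign coherence (Theorem \ref{thm: sign coherence}) is exactly what guarantees that at each vertex only one branch is active, so that the right side is an honest Laurent monomial times a ratio of polynomials. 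Beyond that, the only point needing care is the passage between $\cA_\bullet$ and the coefficient-free pair $(\cA,\cX)$: one must confirm that the grading-based definitions of $F_{A'_i}$ and $g_{ij}$ are compatible with $p$, which is the commutation of $p$ with mutation recorded after \eqref{eq: p map}.
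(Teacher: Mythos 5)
The paper does not prove this theorem; it states it as a cited result from Fomin--Zelevinsky \cite{FZIV} (Proposition~3.13 and Corollary~6.3), so there is no ``paper's own proof'' to compare against. Your reconstruction is essentially the standard FZIV argument: pass to principal coefficients, define $F_{A'_i}$ as the specialization at $A_j=1$, introduce the $\bZ^n$-grading with $\deg A_i=e_i$ and $\deg Y_j$ chosen to make the exchange relations homogeneous, read off $g_{ij}$ from that grading, and then recover both the coefficient-free $A$-formula (by setting $Y_j=1$, under which $\hat Y_j\mapsto p(X_j)$) and the $X$-formula. That is the right route and the algebraic details you spell out (in particular the identity $[\epsilon_{kj}]_+-\epsilon_{kj}=[-\epsilon_{kj}]_+$ that makes the grading well-defined, and the observation that $\hat Y_j|_{Y=1}=p(X_j)$) check out.

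Two small points are worth tightening. First, you say you substitute $A_j\mapsto 1$ into \eqref{eq: A mutation}; that equation is the coefficient-free exchange relation and would just give $2$. What you mean is the principal-coefficient exchange relation (in which the two Laurent monomials carry $C$-vector powers of the $Y_j$'s): substituting $A_j\mapsto 1$ there yields the $F$-polynomial recursion \eqref{eq: F mutation}. Second, invoking sign coherence (Theorem~\ref{thm: sign coherence}) in the $X$-side induction is a valid shortcut, but it is not actually needed for FZIV Proposition~3.13, which predates the sign-coherence theorem and is proved in the tropical semifield without it; sign coherence only lets one write the $C$-matrix mutation and the $F$-recursion in the split $[c]_+/[-c]_+$ form used in \eqref{eq: F mutation}. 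So your plan is sound, but the dependency you flag as ``the main obstacle'' is weaker than you suggest.
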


\begin{defn} The integers $g_{ij}$ form a matrix $g$ called the \emph{$G$-matrix} associated with the mutation sequence $\mu_\bk$ on the initial quiver $Q$. The row vectors of the $G$-matrix are called \emph{$G$-vectors}. The polynomial $F_{A'_i}$ is called the \emph{$F$-polynomial} associated with the cluster variable $A'_i$ with respect to the initial quiver $Q$.
\end{defn}

$F$-polynomials have many interesting properties. Below are a couple of them.

\begin{thm}[Positivity {\cite[Theorem 1.1]{LS}}] $F$-polynomials have positive integer coefficients.
\end{thm}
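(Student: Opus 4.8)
The plan is to prove positivity by expressing each $F$-polynomial as the generating function of a \emph{finite} set of combinatorial objects, so that its coefficients become visibly sums of products of positive integers. I would follow the scattering-diagram approach of Gross, Hacking, Keel, and Kontsevich \cite{GHKK}, under which $F_{A'_i}$ is extracted from the broken-line expansion of a theta function; in the quiver (skew-symmetric) setting relevant to this article this applies directly, and the skew-symmetrizable case of \cite{GHKK} delivers the statement in full generality. The original argument of Lee--Schiffler \cite{LS} instead produces an explicit, manifestly positive combinatorial formula for cluster variables; either route would do.

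First I would attach to the initial quiver $Q$ (with principal coefficients, as in the setup preceding Theorem \ref{thm: separation formula}) its consistent scattering diagram $\fD$ in the lattice determined by $\epsilon$. Every wall of $\fD$ carries a wall-crossing function of the form $(1+z^{m})^{a}$ with $a$ a positive integer, and the Kontsevich--Soibelman consistency that pins down $\fD$ is established without ever introducing a negative coefficient. The dictionary of \cite{GHKK} then says: a mutation sequence $\bk$ determines a chamber of $\fD$, the cluster variable $A'_i$ corresponds to a theta function $\vartheta_i$, and the separation-formula polynomial $F_{A'_i}(\bX)$ is precisely $\vartheta_i$ with its $G$-vector monomial $\prod_j A_j^{g_{ij}}$ divided off. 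Expanding $\vartheta_i$ as a sum over broken lines $\gamma$ with fixed asymptotic direction and generic endpoint in the target chamber, each $\gamma$ contributes a single monomial in the $\bX$-variables whose coefficient is the product of the (finitely many, positive-integer) factors picked up at the bends of $\gamma$. Summing over $\gamma$ therefore exhibits every coefficient of $F_{A'_i}$ as a sum of products of positive integers, hence as a positive integer; that $F_{A'_i}$ is a polynomial and not merely a power series is immediate from the recursive mutation formula \eqref{eq: A mutation}. As a bonus, the unique broken line with no bends contributes the monomial $1$ with coefficient $1$, and every other broken line bends at least once, moving the exponent by a positive multiple of a $C$-vector; sign-coherence (Theorem \ref{thm: sign coherence}) then prevents any return to exponent $0$, so the constant term of $F_{A'_i}$ is exactly $1$.

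The main obstacle is the dictionary step: showing that the $F$-polynomial of the separation formula really is the broken-line generating function requires the whole of \cite{GHKK} --- consistency of $\fD$, the matching of seed mutations with wall crossings, and the structure theorem for theta functions --- and I would import this rather than reprove it. A more self-contained alternative is the categorical route of Derksen--Weyman--Zelevinsky, which writes $F_{A'_i}(\bX)=\sum_{e}\chi\big(\Gr_e(M_i)\big)\prod_j X_j^{e_j}$ for a rigid module $M_i$ over the relevant Jacobian algebra; positivity then reduces to non-negativity of these quiver-Grassmannian Euler characteristics, which follows from the vanishing of odd cohomology of the associated graded quiver varieties (Nakajima, Qin). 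Both routes trade the problem for a substantial piece of machinery, so in practice I would simply cite \cite{LS}; and for the special families studied here, the identification of the DT $F$-polynomials with ideal functions of simply-labeled posets (Theorem \ref{thm: main theorem}) makes positivity and constant term $1$ transparent directly from Definition \ref{defn: ideal functions}.
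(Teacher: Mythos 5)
The paper does not prove this theorem; it is imported from Lee--Schiffler \cite{LS} as a known result, with \cite{GHKK} mentioned separately for the constant-term statement. Your proposal correctly identifies this and ultimately recommends the same thing (``in practice I would simply cite \cite{LS}''), so you and the paper agree on the practical approach. Your accompanying sketch of the GHKK broken-line argument, the Lee--Schiffler combinatorial formula, and the Derksen--Weyman--Zelevinsky quiver-Grassmannian route is accurate as an overview of the known proofs and adds useful context; note that for the quiver-Grassmannian route the nonnegativity of $\chi(\Gr_e(M_i))$ is itself a deep input (Nakajima, Qin) rather than something DWZ established. One small caveat: your observation that Theorem \ref{thm: main theorem} makes positivity ``transparent'' only covers the DT $F$-polynomials for the specific quiver families treated in this paper, not $F$-polynomials in general, which is what the Lee--Schiffler theorem asserts.
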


\begin{thm}[Constant Term {\cite[Corollary 5.5]{GHKK}}] The constant term of an $F$-polynomial is $1$.
\end{thm}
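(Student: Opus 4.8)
The plan is to argue by induction on the length of a mutation sequence that produces the given cluster variable, using the Fomin--Zelevinsky recursion for $F$-polynomials together with sign coherence (Theorem \ref{thm: sign coherence}). For the base case, the separation formula (Theorem \ref{thm: separation formula}) applied to the initial quiver $Q$ gives $A_i = A_i\cdot p(F_{A_i}(\bX))$, hence $F_{A_i}(\bX)=1$, which has constant term $1$.

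For the inductive step, suppose $Q'=\mu_\bk(Q)$ has exchange matrix $\epsilon'$ and $C$-matrix $c$, and assume every $F$-polynomial attached to $Q'$ has constant term $1$. Mutating at a vertex $k$ yields a cluster variable $A''_k$; I would substitute the separation formula into the exchange relation \eqref{eq: A mutation} and use the compatible recursion for $G$-vectors to cancel the $\bA$-monomials, arriving at the $F$-polynomial recursion
\[
F_{A''_k}\cdot F_{A'_k} \;=\; M_+\prod_j F_{A'_j}^{[\epsilon'_{kj}]_+}\;+\;M_-\prod_j F_{A'_j}^{[-\epsilon'_{kj}]_+},
\]
where $M_+=\prod_j X_j^{[c_{kj}]_+}$ and $M_-=\prod_j X_j^{[-c_{kj}]_+}$ are monomials in the initial cluster Poisson variables. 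Since $F_{A''_k}$ is an honest polynomial (again Theorem \ref{thm: separation formula}) and $F_{A'_k}$ has nonzero constant term, I can evaluate this identity at $\bX=0$: the inductive hypothesis makes every $F_{A'_j}(0)=1$, so $F_{A''_k}(0)=M_+(0)+M_-(0)$. Sign coherence forces $c_k$ to be entrywise nonnegative or entrywise nonpositive, so exactly one of $M_+,M_-$ is the constant monomial $1$; and since the $C$-matrix is invertible (tropical duality with the $G$-matrix, \cite{NZ}) we have $c_k\neq 0$, so the other monomial genuinely involves some $X_j$ and vanishes at $\bX=0$. Hence $F_{A''_k}(0)=1+0=1$, closing the induction.

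The hard part will be pinning down the recursion in exactly this normalization --- in particular, checking that the two ``coefficient monomials'' are precisely $\prod_j X_j^{[c_{kj}]_+}$ and $\prod_j X_j^{[-c_{kj}]_+}$, and not these multiplied by a common Laurent monomial in $\bX$ that could spoil the constant-term count. This identification is exactly the content of the separation results of Fomin and Zelevinsky, so I would cite it rather than rederive it. An alternative route, which is the one behind \cite{GHKK}, is to realize $F_{A'_i}$ as the generating function over broken lines with a fixed asymptotic direction in the scattering diagram of the principal-coefficient cluster variety: the unique straight (unbent) broken line contributes the monomial $1$, while every broken line that bends at least once contributes a monomial divisible by some $X_j$, so the constant term is exactly $1$. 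That argument is conceptually cleaner but demands the full scattering-diagram apparatus.
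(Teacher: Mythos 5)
Your argument is correct, but note that the paper itself supplies no proof of this statement --- it is recorded purely as a citation to GHKK. What you have written out is the classical reduction of the constant-term property to sign coherence of the $C$-matrix: evaluate the recursion \eqref{eq: F mutation} at $\bX=0$, use the inductive hypothesis that all $F$-polynomials at the previous step equal $1$ there, and observe that sign coherence together with $c'_k\neq 0$ forces exactly one of the two coefficient monomials $\prod_j X_j^{[c'_{kj}]_+}$ and $\prod_j X_j^{[-c'_{kj}]_+}$ to be the constant $1$, the other vanishing at the origin. The ``hard part'' you flag is not actually open: the recursion is stated in exactly this normalization as Equation \eqref{eq: F mutation} in the paper (citing \cite[Proposition 5.1]{FZIV}), and the remark immediately following it --- that one and only one of $\prod_jX_j^{[c'_{kj}]_+}$ and $\prod_jX_j^{[-c'_{kj}]_+}$ is non-trivial --- is precisely the dichotomy your evaluation at $\bX=0$ exploits. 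Your appeal to $c^{-1}=g^t$ to get invertibility of the $C$-matrix works; alternatively, each mutation acts on the $C$-matrix by a unimodular integer matrix, so $\det c=\pm 1$ throughout and no row can vanish, without invoking tropical duality. Conceptually you have two distinct proofs in hand: your main argument consumes only sign coherence, which is an output of the Derksen--Weyman--Zelevinsky and Gross--Hacking--Keel--Kontsevich theory but much weaker than the full scattering-diagram machinery; the ``alternative route'' via broken lines that you sketch at the end is in fact the argument GHKK use for the cited Corollary 5.5, where the unbent broken line contributes the constant $1$ and every bend introduces a factor of some $X_j$.
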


On the other hand, the $G$-matrix is closely related to its corresponding $C$-matrix via the following theorem.

\begin{thm}[Tropical cluster duality {\cite[Theorem 1.2]{NZ}}]\label{thm: tropical cluster duality} $g^t=c^{-1}$. As a corollary, the $G$-matrix $g$ also enjoys a sign coherence along its column vectors.
\end{thm}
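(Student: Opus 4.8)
The plan is to prove the identity $g^t=c^{-1}$ by induction on the length $\ell$ of the mutation sequence $\bk=(k_1,\dots,k_\ell)$, matching up the one-step mutation rules for $c$ and for $g$. When $\ell=0$ both matrices equal $\id$ and there is nothing to prove. For the inductive step, set $Q'=\mu_\bk(Q)$ with exchange matrix $\epsilon'$, let $c$ and $g$ be the $C$- and $G$-matrices attached to $\bk$, assume $g^t=c^{-1}$, and mutate once more at a vertex $k$. First I would extract the $C$-recursion by mutating the principal-coefficient exchange matrix $\left(\begin{smallmatrix}\epsilon' & c\\ -c^t & \ast\end{smallmatrix}\right)$ at $k$: invoking sign coherence (Theorem~\ref{thm: sign coherence}) to replace each bracket $[\pm c_{kj}]_+$ by $c_{kj}$ or $0$ according to the common sign $\varepsilon$ of the $k$-th $C$-vector, one finds $c^{\mathrm{new}}=E\,c$, where $E=E_\varepsilon(\epsilon')$ is the identity matrix with its $k$-th column replaced by the column having $-1$ in position $k$ and $[-\varepsilon\epsilon'_{ik}]_+$ in position $i\neq k$. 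In particular $E$ depends only on the \emph{current} exchange matrix $\epsilon'$.

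Next I would write down the $G$-recursion. Because mutating at $k$ changes only the cluster variable $A'_k$, the separation formula (Theorem~\ref{thm: separation formula}) together with the exchange relation~\eqref{eq: A mutation} shows that only the $k$-th $G$-vector changes, so $g^{\mathrm{new}}=F\,g$ for a matrix $F$ equal to the identity outside its $k$-th row. The subtle point — and this is where the theorem has all its content — is that the naive formula for the $k$-th row of $F$ (essentially the $g$-vector recursion of \cite{FZIV}) involves not only $\epsilon'$ but also the \emph{initial} exchange matrix $\epsilon$ of $Q$, entering through the $p$-map~\eqref{eq: p map} and the correction term $p(F_{A'_k})$. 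Thus the $C$- and $G$-recursions are not visibly compatible, and a direct comparison fails.

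The key step is to remove the dependence of $F$ on $\epsilon$. Here the inductive hypothesis $c\,g^t=\id$ is used to trade the $\epsilon$-term (an exponent vector of initial cluster variables) for $\epsilon'$-data; combined with a second application of sign coherence to linearize the remaining $[\,\cdot\,]_+$ brackets against the entries of the $k$-th column of $c$, this lets one verify that $F$ is precisely the transpose-inverse of the $C$-mutation matrix, i.e.\ $F^t=E^{-1}$, equivalently $E^tF=\id$ (skew-symmetry of $\epsilon'$ is used at this point). Granting this, $(g^{\mathrm{new}})^t=g^tF^t=c^{-1}E^{-1}=(E\,c)^{-1}=(c^{\mathrm{new}})^{-1}$, which completes the induction. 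I expect this reconciliation to be the main obstacle: it is the only place where the inductive hypothesis is genuinely used rather than merely carried along, and it is exactly the phenomenon that makes $g^t=c^{-1}$ a nontrivial duality.

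For the corollary, note that the columns of $g$ are the rows of $g^t=c^{-1}$, so it suffices to show that $c^{-1}$ has sign-coherent rows. But $c^{-1}$ is itself a $C$-matrix: running the same kind of induction shows that the inverse of the $C$-matrix of the path $\bk$ based at $Q$ is the $C$-matrix of the reversed path $(k_\ell,\dots,k_1)$ based at $Q'=\mu_\bk(Q)$. Since all $C$-matrices have sign-coherent rows by Theorem~\ref{thm: sign coherence}, the rows of $c^{-1}$ — hence the columns of $g$ — are sign-coherent.
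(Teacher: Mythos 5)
The paper does not prove this theorem; it simply cites it as \cite[Theorem 1.2]{NZ}. So there is no in-paper proof to compare against, and what you have written is a reconstruction of the Nakanishi--Zelevinsky argument.

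Your framework (induction on $\ell$, one-step updates $c^{\mathrm{new}}=Ec$ and $g^{\mathrm{new}}=Fg$, and the target identity $F^t=E^{-1}$) is the right shape, and your treatment of the $C$-side and of the corollary are sound: in particular the observation that $c^{-1}$ is the $C$-matrix of the reversed sequence based at $Q'$, so that row sign coherence of $c^{-1}$ gives column sign coherence of $g=(c^{-1})^t$, is exactly the standard deduction. (One small bookkeeping slip: with the paper's conventions and $\varepsilon$ the common sign of the $k$-th row of $c$, the off-diagonal entry of $E$ in the $k$-th column is $[\varepsilon\epsilon'_{ik}]_+$, not $[-\varepsilon\epsilon'_{ik}]_+$; and since $E^2=\id$, the target identity $F^t=E^{-1}$ is just $F=E^t$.) The genuine gap is exactly where you flag it: you do not actually establish $F=E^t$, and the way you propose to get there is worrying. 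You invoke ``a second application of sign coherence to linearize the remaining $[\,\cdot\,]_+$ brackets against the entries of the $k$-th column of $c$.'' Theorem~\ref{thm: sign coherence} gives sign coherence of the \emph{rows} of $c$; sign coherence of a \emph{column} of $c$ is not granted and is essentially equivalent (via $g^t=c^{-1}$) to the column sign coherence of $g$ that you are trying to deduce as a corollary, so as written this step risks circularity. In \cite{NZ} the reconciliation goes through a separate lemma --- the ``second duality,'' which relates the signed $c$-vectors to the $g$-vectors of the transposed exchange pattern via tropical signs --- and that lemma carries the real content; your sketch would need to reproduce it (or cite it) rather than appeal to column sign coherence of $c$ directly.
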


Recall that two quivers $Q$ and $Q'$ are said to be \emph{isomorphic} if there is a bijection $\sigma:I\rightarrow I'$ such that $\epsilon'_{\sigma(i)\sigma(j)}=\epsilon_{ij}$. If $\mu_\bk$ is a mutation sequence on an initial quiver $Q$ such that $Q'=\mu_\bk(Q)$ is isomorphic to $Q$ via $\sigma$, then $\sigma$ induces an automorphism on the cluster algebra $\cA$ and an automorphism on the cluster Poisson algebra $\cX$. By abuse of notation, we also denote these automorphisms by $\sigma$, and it is defined on the cluster variables and cluster Poisson variables of $Q'$ and $Q$ by
\[
\sigma(A_i)=A'_{\sigma(i)} \quad \quad \text{and} \quad \quad \sigma(X_i)=X'_{\sigma(i)}.
\]
Automorphisms defined by isomorphisms between mutation equivalent quivers are called \emph{cluster transformations}.

\begin{thm}[{\cite[Theorem 3.4]{GS2}}] If $\mu_\bk$ is a reddening sequence on an initial quiver $Q$, then $\mu_\bk(Q)$ is isomorphic to $Q$. In particular, there is a unique isomorphism $\sigma$ such that $c_{i\sigma(j)}=-\delta_{ij}$.
\end{thm}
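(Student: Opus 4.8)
The statement has two parts, and I would reduce both to the single claim that the $C$-matrix $c$ associated with the reddening sequence $\mu_\bk$ on $Q$ equals $-P_\sigma$ for a (unique) permutation matrix $P_\sigma$, normalized so that $(P_\sigma)_{ij}=\delta_{j,\sigma(i)}$. Indeed, the condition $c_{i\sigma(j)}=-\delta_{ij}$ is then just $c=-P_\sigma$ rewritten, and it determines $\sigma$ uniquely, since $\sigma(i)$ must be the column index of the unique $-1$ in the $i$-th row of $c$; and the quiver isomorphism $\mu_\bk(Q)\cong Q$ will follow from a compatibility identity between $c$, the $G$-matrix $g$, and the two exchange matrices. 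So the plan is: first show $c=-P_\sigma$, then deduce the isomorphism.

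\emph{Showing $c=-P_\sigma$.} By the definition of a reddening sequence, every row of $c$ is non-positive. By tropical cluster duality (Theorem~\ref{thm: tropical cluster duality}), $g^t=c^{-1}$ with $g$ an integer matrix, so $\det c=\pm1$; in particular $c$ is invertible over $\bZ$ and has no zero row. Tropical duality also says $g$ is sign-coherent along its columns, equivalently $c^{-1}$ is sign-coherent along its \emph{rows}. I would then invoke the following elementary lemma: a non-positive integer matrix $c$, invertible over $\bZ$ and with $c^{-1}$ sign-coherent along rows, is of the form $-P$ for a permutation matrix $P$. To prove it, put $E:=-c\geq0$, so $E^{-1}=-c^{-1}$ is still sign-coherent along rows. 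Inspecting a diagonal entry $1=(E^{-1}E)_{ii}=\sum_k(E^{-1})_{ik}E_{ki}$ and using $E\geq0$ forces the $i$-th row of $E^{-1}$ to be non-negative, so $E^{-1}\geq0$. Finally, a non-negative integer matrix with non-negative integer inverse is a permutation matrix: from $1=(EE^{-1})_{ii}=\sum_kE_{ik}(E^{-1})_{ki}$ every row of $E$ has a single nonzero entry, equal to $1$; an additional nonzero entry $E_{il}$ in row $i$ would, via $(EE^{-1})_{ij}=0$ for $j\neq i$, force the entire $l$-th row of $E^{-1}$ to vanish, contradicting invertibility; and since $E$ is invertible these entries form a permutation. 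This yields $c=-P_\sigma$ and the uniqueness of $\sigma$.

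\emph{Deducing the isomorphism.} I would use the compatibility identity
\[
c\,\epsilon=\epsilon'\,g,
\]
where $\epsilon$, $\epsilon'$ are the exchange matrices of $Q$ and $Q'=\mu_\bk(Q)$. This follows by applying the $p$-map~\eqref{eq: p map} to the separation formula for $X'_i$ (Theorem~\ref{thm: separation formula}): using $p(F_{A'_j}(\bX))=A'_j\prod_l A_l^{-g_{jl}}$ from the separation formula for $A'_j$, and comparing $p(X'_i)=\prod_j(A'_j)^{\epsilon'_{ij}}$ with $p\!\left(\prod_j X_j^{c_{ij}}F_{A'_j}(\bX)^{\epsilon'_{ij}}\right)$, one finds that the Laurent monomial $\prod_l A_l^{(c\epsilon)_{il}-(\epsilon'g)_{il}}$ equals $1$, whence the identity. (Alternatively it is classical and can be proved by induction on the length of $\bk$.) Since $c=-P_\sigma$, tropical duality gives $g=(c^{-1})^t=-P_\sigma$ as well, so $\epsilon'=c\,\epsilon\,g^{-1}=P_\sigma\,\epsilon\,P_\sigma^{-1}$, i.e.\ $\epsilon'_{ij}=\epsilon_{\sigma(i)\sigma(j)}$. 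This is exactly the statement that $\sigma$ (or $\sigma^{-1}$, according to the direction in which one's convention makes an isomorphism point) identifies $\mu_\bk(Q)$ with $Q$ as quivers, completing the argument.

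\emph{Main obstacle.} The heart of the proof is the linear-algebra lemma above, and specifically the need to combine \emph{both} pieces of sign information — non-positivity of the rows of $c$ (the reddening hypothesis) and sign-coherence of the rows of $c^{-1}$ (tropical duality) — since neither alone suffices: for example $c=\left(\begin{smallmatrix}-1&-1\\0&-1\end{smallmatrix}\right)$ is non-positive and invertible over $\bZ$ but is not minus a permutation matrix, and it is excluded precisely because the associated $g=(c^{-1})^t=\left(\begin{smallmatrix}-1&0\\1&-1\end{smallmatrix}\right)$ fails column sign-coherence. The rest — establishing or citing $c\epsilon=\epsilon'g$ and matching conventions so that the isomorphism carries the same label $\sigma$ as the $C$-matrix — is routine bookkeeping.
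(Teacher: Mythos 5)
The paper cites this result from \cite[Theorem 3.4]{GS2} and does not reproduce a proof, so there is no internal argument to compare your proposal against; I can only assess it on its own merits. Your proof is correct. The reduction to showing $c=-P_\sigma$ is the right move, and the linear-algebra lemma you isolate — a non-positive integer matrix, invertible over $\bZ$, whose inverse is row-sign-coherent must be $-$(permutation) — is a clean and complete way to combine the reddening hypothesis (rows of $c$ non-positive) with tropical duality (rows of $c^{-1}=g^t$ sign-coherent). The two-sentence counterexample $c=\left(\begin{smallmatrix}-1&-1\\0&-1\end{smallmatrix}\right)$ correctly shows that non-positivity alone is insufficient, which justifies invoking Theorem \ref{thm: tropical cluster duality}. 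Your derivation of the compatibility identity $c\,\epsilon=\epsilon'g$ by applying the $p$-map to both sides of the second separation formula and cancelling the $A'_j$-factors is valid: after cancellation one is comparing two Laurent monomials in the algebraically independent $A_l$'s, so exponents match entrywise, and no injectivity of $p$ is needed. Combined with $g=(c^{-1})^t=-P_\sigma$ this gives $\epsilon'=P_\sigma\epsilon P_\sigma^{-1}$, i.e.\ $\epsilon'_{ij}=\epsilon_{\sigma(i)\sigma(j)}$, which is the quiver isomorphism. The only loose end, which you already flag as bookkeeping, is that under the paper's stated convention ($\epsilon'_{\sigma(i)\sigma(j)}=\epsilon_{ij}$ defines an isomorphism $\sigma:Q\to Q'$) your computed $\sigma$ is the isomorphism in the opposite direction, i.e.\ $\sigma^{-1}$ is the isomorphism $Q\to Q'$ in that convention; this is a genuine (if harmless) mismatch between the condition $c_{i\sigma(j)}=-\delta_{ij}$ and the paper's isomorphism convention, inherited from \cite{GS2}, and worth stating explicitly rather than leaving implicit.
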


\begin{defn} The cluster transformation defined by a reddening sequence on $Q$ (together with the unique isomorphism $\sigma$) is called a \emph{cluster Donaldson-Thomas transformation}; we denote cluster Donaldson-Thomas transformation by $\DT$.
\end{defn}

\begin{thm}[{\cite[Corollary 3.7]{GS2}}] A cluster DT transformation is unique if it exists. In particular, $\DT$ commutes with cluster mutations and does not depend on the choice of an initial quiver.
\end{thm}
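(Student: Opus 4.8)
The plan is to reduce the statement to the elementary observation that a cluster transformation, being in particular a field automorphism of the field $\bQ(\bA)$ of rational functions in the initial cluster variables, is completely determined by the tuple of cluster variables $(\DT(A_1),\dots,\DT(A_n))$ it produces. Thus proving uniqueness of $\DT$ amounts to showing that these $n$ cluster variables do not depend on the chosen reddening sequence; the assertions about mutations and the choice of initial quiver will then follow once $\DT$ has been characterized intrinsically.

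First I would fix a reddening sequence $\mu_\bk$ on $Q$ together with its canonical isomorphism $\sigma$ provided by the preceding theorem, so that $\DT(A_i)=A'_{\sigma(i)}$ is a specific cluster variable of $\mu_\bk(Q)$. Feeding the reddening condition $c_{i\sigma(j)}=-\delta_{ij}$ into tropical cluster duality (Theorem \ref{thm: tropical cluster duality}) shows that the $G$-matrix of $\mu_\bk(Q)$ relative to $Q$ is, up to sign, a permutation matrix; reading the separation formula (Theorem \ref{thm: separation formula}) off this, each $\DT(A_i)$ has $G$-vector a negative standard basis vector, and after unwinding the permutation conventions one sees that $\DT(A_i)$ is a cluster variable with $G$-vector $-e_i$. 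I would then invoke the standard fact — a consequence of the linear independence of cluster monomials, equivalently of the theta-function description of \cite{GHKK} — that distinct cluster variables have distinct $G$-vectors. Hence $\DT(A_i)$ is \emph{the} unique cluster variable with $G$-vector $-e_i$, a description making no reference to $\bk$, so any two reddening sequences yield the same automorphism $\DT$. Equivalently, and perhaps more cleanly: two reddening sequences produce seeds of the principal-coefficient cluster algebra with the same $C$-matrix (namely $-\id$, after the canonical relabelings), and a seed of a principal-coefficient cluster algebra is determined by its $C$-matrix, again by \cite{GHKK}. The cluster Poisson side is handled identically, using the $p$-map of \eqref{eq: p map} or repeating the computation with the tropicalization of $\DT$ on $\cX$.

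For the remaining assertions, observe that mutation-equivalent quivers $Q$ and $Q''=\mu_l(Q)$ have literally the same cluster algebra $\cA$ with the same set of cluster variables; a reddening sequence for $Q$ yields one for $Q''$ by pre- and post-composing with suitable single mutations (Keller's rotation trick), and the resulting automorphism of $\cA$ admits the same intrinsic description — send each cluster variable to the one whose $G$-vector, relative to any fixed seed, is negated — so it must coincide with $\DT$. Commutation of $\DT$ with cluster mutations is then exactly the seed-independence of this description, which holds because the $G$-matrix mutation rules intertwine the two sides.

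The one genuinely deep ingredient, and the main obstacle, is the input that $G$-vectors (equivalently $C$-vectors, equivalently $F$-polynomials) determine cluster variables and seeds: this rests on sign coherence (Theorem \ref{thm: sign coherence}) together with the exchange-graph and scattering-diagram results of \cite{GHKK}. Everything else is bookkeeping — principally, tracking the permutation $\sigma$ and the transpose/inverse in tropical cluster duality precisely enough to confirm that the relevant $G$-vectors are exactly the $-e_i$, which is what pins down $\DT$ itself rather than merely some power of it.
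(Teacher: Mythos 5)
The paper does not prove this statement --- it is cited from Goncharov--Shen \cite{GS2}, Corollary 3.7 --- so there is no in-text argument to compare against. On its own merits, the first half of your proposal (uniqueness of $\DT$ given a fixed initial quiver $Q$) is correct and is the standard argument: once each $\DT(A_i)$ is shown to be a cluster variable with $G$-vector $-e_i$, the \cite{GHKK} injectivity of the $G$-vector map on cluster variables (equivalently, that a seed of the principal-coefficient cluster algebra is determined by its $C$-matrix) pins down $\DT$ as a field automorphism of $\bQ(\bA)$.

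The gap is in the second half. The ``intrinsic description'' you lean on --- send each cluster variable to the one whose $G$-vector \emph{relative to any fixed seed} is negated --- was only established for the initial cluster variables relative to $Q$ itself. Using it to get mutation-commutation and initial-quiver independence requires the stronger claim that for \emph{every} seed in the mutation class, its image under $\DT_Q$ has $C$-matrix $-\id$ up to permutation relative to that seed. This does not follow from $C$-matrix injectivity alone: $C$- and $G$-matrices do not compose multiplicatively under a change of reference seed (the composition law is piecewise-linear), so the compatibility of negation with a base change is a genuine assertion that needs proof. Likewise, Keller's rotation trick manufactures a reddening sequence for $\mu_l(Q)$, but one must still check that the automorphism it induces agrees with $\DT_Q$, which is essentially the same unproved claim restated. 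Goncharov--Shen close this loop by characterizing $\DT$ tropically, as the cluster transformation sending the positive chamber of the scattering diagram (equivalently, the canonical positive tropical point of $\cX$) to the negative chamber --- a description visibly independent of the reference seed, from which uniqueness, mutation-commutation, and initial-quiver independence all follow at once. Supplying either that tropical characterization, or a direct proof that $\DT_Q$ negates $C$-matrices with respect to every seed, would complete your argument.
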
 

Since cluster DT transformations map cluster variables to cluster variables, we can apply the separation formula (Theorem \ref{thm: separation formula}) to the cluster variable $\DT(A_i)$ and the cluster Poisson variable $\DT(X_i)$. Moreover, because of the tropical cluster duality (Theorem \ref{thm: tropical cluster duality}), we know that the integers $g_{ij}=-\delta_{ij}$; therefore we have
\begin{equation}\label{eq: DT F polynomial}
\DT(A_i)=A_i^{-1}p(F_i(\bX)) \quad \quad \text{and} \quad \quad \DT(X_i)=X_i^{-1}\prod_jF_j(\bX)^{\epsilon_{ij}}.
\end{equation}
Note that since $\DT$ is unique, the $F$-polynomials $F_i$ do not depend on the choice of reddening sequence. In conclusion, for any quiver $Q$ with cluster DT transformations (i.e., reddening sequences), there is a unique collection of $F$-polynomials for the cluster DT transformation, and conversely, the information of the cluster DT transformation is completely determined by its $F$-polynomials with respect to an initial quiver $Q$.

\section{Properties of \texorpdfstring{$F$-Polynomials for DT Transformations}{}}

Similar to cluster variables, the $F$-polynomials themselves also have a mutation formula. The $F$-polynomial associated with every initial cluster variable is the constant polynomial $1$. Now suppose we have obtained the collection of $F$ polynomials $\{F'_i\}$ for a certain quiver $Q'$ with exchange matrix $\epsilon'$ and $C$-matrix $c'$. Then mutating again in the direction $k$ keeps all $F$-polynomial $F'_i$ with $i\neq k$ unchanged, and changes $F'_k$ to {\cite[Proposition 5.1]{FZIV}}:
\begin{equation}\label{eq: F mutation}
F''_k=\mu_k(F'_k)=\frac{1}{F'_k}\left(\prod_jX_j^{[c'_{kj}]_+}F'^{[\epsilon'_{kj}]_+}_j+\prod_jX_j^{[-c'_{kj}]_+}F'^{[-\epsilon'_{kj}]_+}_j\right).
\end{equation}
Note that by sign coherence (Theorem \ref{thm: sign coherence}), one and only one of $\prod_jX_j^{[c'_{kj}]_+}$ and $\prod_jX_j^{[-c'_{kj}]_+}$ is non-trivial.

Now let us focus on the $F$-polynomials for cluster DT transformations \eqref{eq: DT F polynomial}. Suppose $Q'=\mu_k(Q)$. We may use $Q'$ instead of $Q$ as an initial quiver, and there is a different collection of $F$-polynomials $\{F'_i(\bX')\}_{i\in I}$ for $\DT$ with respect to $Q'$ as the initial quiver. Please be aware that the variables $\bX'=\{X_i\}_{i\in I}$ associated with $Q'$ are different from those associated with $Q$. By taking the inverse of the cluster Poisson mutation formula \eqref{eq: X mutation}, we get the following relation:
\begin{equation}\label{eq: reverse X mutation}
X_i=\left\{\begin{array}{ll} X'^{-1}_k & \text{if $i=k$},\\
X'_i\left(1+X'^{\sgn(\epsilon_{ik})}_k\right)^{\epsilon_{ik}} & \text{otherwise.} \end{array}\right.    
\end{equation}
Given an expression $F$ (e.g. polynomial) in the $\bX$ variables, we can use Equation \ref{eq: reverse X mutation} to rewrite it in terms of the $\bX'$ variables. We denote the resulting expression by $\widehat{F}$.

\begin{prop}[Relating $F$-polynomials of DT for adjacent clusters]\label{prop: DT for adjacent clustsers} Suppose $Q'=\mu_k(Q)$. The $F$-polynomails $\{F'_i(\bX')\}_{i\in I}$ of $\DT$ with respect to $Q$' are related to the $F$-polynomials $\{F_i(\bX)\}_{i\in I}$ of $\DT$ with respect to $Q$ by
\[
F'_i(\bX')=\left\{\begin{array}{ll} 
(1+X'_k)\widehat{\mu_k(F_k)} & \text{if $i=k$},\\ \quad & \\
\widehat{F}_i & \text{otherwise}.
\end{array}\right.
\]
\end{prop}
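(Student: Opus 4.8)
The plan is to compare the two instances of the separation formula (Theorem \ref{thm: separation formula}) for the cluster variables $\DT(A_i)$ with respect to the two initial quivers $Q$ and $Q'=\mu_k(Q)$, and to exploit the fact that $\DT$ commutes with mutations so that the two descriptions must agree. Concretely, write $A_i''=\DT(A_i)$. With respect to $Q$ we have $A_i''=A_i^{-1}p(F_i(\bX))$ from \eqref{eq: DT F polynomial}, and with respect to $Q'$ we have $A_i''=(A_i')^{-1}p'(F_i'(\bX'))$, where the $A_j'$ and $X_j'$ are the cluster data of $Q'$ and $p'$ is the (same, by the commutation of $p$ with mutation) map expressed in the $Q'$-variables. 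Since the $p$-map is independent of the quiver, the only genuine input is: (i) how the $A_j$ relate to the $A_j'$ via \eqref{eq: A mutation}, (ii) how the $X_j$ relate to the $X_j'$ via \eqref{eq: reverse X mutation}, and (iii) the known $F$-polynomial mutation formula \eqref{eq: F mutation} applied to the collection $\{F_j(\bX)\}$.

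The case $i\neq k$ is the easy half. Here $A_i'=A_i$, and the $G$-vector of $\DT(A_i)$ is $-\delta_i$ in both conventions (tropical cluster duality, Theorem \ref{thm: tropical cluster duality}), so comparing $A_i^{-1}p(F_i(\bX))=A_i^{-1}p(F_i'(\bX'))$ forces $F_i'(\bX')=F_i(\bX)$ once everything is rewritten in a common set of variables; but ``rewriting $F_i(\bX)$ in the $\bX'$ variables'' is precisely the operation $F\mapsto\widehat F$ defined via \eqref{eq: reverse X mutation}. Hence $F_i'(\bX')=\widehat{F}_i$. I would phrase this cleanly by noting that $p(F_i(\bX))$ is a quiver-independent element of $\bQ(\bA)$, and that substituting \eqref{eq: reverse X mutation} into $F_i$ is exactly what is needed to re-express it through $p'$ on the $Q'$-side.

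For $i=k$ the argument is the analogue of deriving the $F$-mutation formula \eqref{eq: F mutation}, but ``from the other side''. Apply \eqref{eq: F mutation} to the family $\{F_j(\bX)\}$ (with the $C$-matrix $c$ of the trivial mutation sequence, i.e.\ $c=\id$, so $[c_{kj}]_+$ and $[-c_{kj}]_+$ pick out the appropriate monomials) to obtain $\mu_k(F_k)$, an element of $\bQ(\bX)$. Then I want to show $F_k'(\bX')=(1+X_k')\,\widehat{\mu_k(F_k)}$. The cleanest route: start from $A_k''=A_k^{-1}p(F_k(\bX))$, use the $A$-mutation relation expressing $A_k$ in terms of the $A_j'$ (namely $A_kA_k'=\prod_j (A_j')^{[\epsilon_{kj}]_+}+\prod_j(A_j')^{[-\epsilon_{kj}]_+}$), and also use the $X$-mutation relation \eqref{eq: reverse X mutation}; after substituting and collecting, the right-hand side should organize itself as $(A_k')^{-1}$ times the $p'$-image of $(1+X_k')\widehat{\mu_k(F_k)}$. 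Matching this against $A_k''=(A_k')^{-1}p'(F_k'(\bX'))$ and using that $p'$ is injective on the relevant polynomials (or that $F$-polynomials are uniquely determined as the non-monomial factor in the separation formula) gives the claim.

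The main obstacle I anticipate is bookkeeping with the sign-coherence choices and the $[\ \cdot\ ]_+$ functions: the relations \eqref{eq: A mutation}, \eqref{eq: reverse X mutation}, and \eqref{eq: F mutation} each split into two cases depending on the sign pattern of the relevant matrix entries, and one has to check that the factor $(1+X_k')$ appears with the correct exponent $+1$ (not, say, $\pm 1$ depending on a sign) after all substitutions — this is where the precise definition of $\widehat{\ \cdot\ }$ and the asymmetry between \eqref{eq: X mutation} and \eqref{eq: reverse X mutation} matters. A secondary technical point is justifying that it suffices to match the ``$F$-part'' of the separation formula, i.e.\ that the $G$-vector part is automatically $-\delta_k$ on the $Q'$-side as well; this follows from the uniqueness of $\DT$ together with tropical cluster duality, and I would cite \eqref{eq: DT F polynomial} for the $Q'$-initial-quiver setup directly rather than re-deriving it.
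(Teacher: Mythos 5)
Your treatment of the case $i\neq k$ is correct and matches the paper: since $A'_i=A_i$, comparing the separation formula gives $p(F'_i(\bX'))=p(F_i(\bX))$, and the substitution \eqref{eq: reverse X mutation} yields $F'_i=\widehat F_i$.

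The case $i=k$, however, has a genuine gap: you are computing the wrong cluster variable. By \eqref{eq: DT F polynomial} with $Q'$ as the initial quiver, $F'_k$ is defined by $\DT(A'_k)=(A'_k)^{-1}p(F'_k(\bX'))$; but your computation starts from $A''_k=\DT(A_k)$ and then asserts $\DT(A_k)=(A'_k)^{-1}p(F'_k(\bX'))$, which is false because $A_k\neq A'_k$ and so $\DT(A_k)\neq \DT(A'_k)$. Indeed, if you substitute $A_k=\frac{1}{A'_k}\bigl(\prod_j(A'_j)^{[\epsilon_{kj}]_+}+\prod_j(A'_j)^{[-\epsilon_{kj}]_+}\bigr)$ into $\DT(A_k)=A_k^{-1}p(F_k)$, you obtain an expression proportional to $A'_k$ (to the power $+1$), not $(A'_k)^{-1}$, so the $G$-vector you read off is $+\delta_k$, not $-\delta_k$; the separation formula you are invoking simply does not apply to $\DT(A_k)$ with the asserted $G$-vector. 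What must be done instead is to apply $\DT$ to the $A$-mutation formula for $A'_k$, i.e.\ compute
$\DT(A'_k)=\frac{1}{\DT(A_k)}\Bigl(\prod_j\DT(A_j)^{[\epsilon_{kj}]_+}+\prod_j\DT(A_j)^{[-\epsilon_{kj}]_+}\Bigr)$,
substitute $\DT(A_j)=A_j^{-1}p(F_j)$, and then collect: the recombination of $A$-monomials produces the factor $(A'_k)^{-1}(1+p(X_k))$, and the $F$-bookkeeping recognizes the remaining bracket as $p(\mu_k(F_k))$. That is the computation the paper performs.

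A secondary error: you propose to apply \eqref{eq: F mutation} to $\{F_j\}$ with ``$c=\id$'' because you think of it as a trivial mutation sequence. But the $F_j$ are the $F$-polynomials at the \emph{end} of a reddening sequence, where the $C$-matrix is $-\id$, not $\id$. With $c=-\id$ one gets $[c_{kj}]_+=0$ and $[-c_{kj}]_+=\delta_{kj}$, so the monomial $X_k$ sits with $\prod_j F_j^{[-\epsilon_{kj}]_+}$; your choice $c=\id$ puts it with the other summand and would propagate a sign error into the final formula. Your instinct that the sign-coherence bookkeeping is the delicate point is correct, but you resolved it incorrectly.
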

\begin{proof} Suppose we first use $Q$ as the initial quiver and let $\mu_\bk$ be a reddening sequence on $Q$. For simplicity, let us assume that the quiver isomorphism $\sigma$ relating $Q$ and $\mu_\bk(Q)$ is the identity map. Now after the reddening sequence $\mu_\bk$, we can mutate one more time in the direction of $k$. The $F$-polynomials after this extra mutation are all identical to the $F$-polynomials of the $\DT$ transformation except for $F_k$, which mutates according to \eqref{eq: F mutation}. 

Next, we would like to change the initial seed to $Q'$. Note that for $i\neq k$, the initial cluster variable $A'_i$ is identical to the initial cluster variable $A_i$ of $Q$. Therefore
\[
\DT(A'_i)=\DT(A_i)=A_i^{-1}p(F_i)=A'^{-1}_ip(F_i),
\]
and hence the $F$-polynomial $F'_i$ is just $F_i$ after a substitution according to \eqref{eq: reverse X mutation}, i.e., $\widehat{F}_i$.

Now consider the cluster variable $A'_k$. According to \eqref{eq: A mutation}, and \eqref{eq: DT F polynomial}, we have
\begin{align*}
\DT(A'_k)=&\frac{A_k}{p(F_k)}\left(\prod_jA_j^{-[\epsilon_{kj}]_+}p(F_j)^{[\epsilon_{kj}]_+}+\prod_j A_j^{-[-\epsilon_{kj}]_+}p(F_j)^{[-\epsilon_{kj}]_+}\right)\\
=&A_k\left(\prod_j A_j^{-[-\epsilon_{kj}]_+}\right)\frac{1}{p(F_k)}\left(\prod_jA_j^{-\epsilon_{kj}}p(F_j)^{[\epsilon_{kj}]_+}+\prod_jp(F_j)^{[-\epsilon_{kj}]_+}\right)\\
=&A_k\left(\prod_j A_j^{-[-\epsilon_{kj}]_+}\right)\frac{p(X_k)^{-1}}{p(F_k)}\left(\prod_jp(F_j)^{[\epsilon_{kj}]_+}+p(X_k)\prod_jp(F_j)^{[-\epsilon_{kj}]_+}\right)
\end{align*}
Note that since the $C$-matrix after $\DT$ is $-\id$. By comparing with the mutation formula for $F$-polynomials \eqref{eq: F mutation}, the last expression is in turn equal to
\begin{equation}\label{eq1}
\DT(A'_k)=A_k\left(\prod_j A_j^{-[-\epsilon_{kj}]_+}\right)p(X_k)^{-1}p(\mu_k(F_k)).
\end{equation}
On the other hand, from \eqref{eq: A mutation} we can also deduce that
\[
A_k\prod_j A_j^{-[-\epsilon_{kj}]_+}=A'^{-1}_k\left(1+\prod_j A_j^{\epsilon_{kj}}\right)=A'^{-1}_k\left(1+p(X_k)\right).
\]
By plugging this into \eqref{eq1}, we get
\[
\DT(A'_k)=A'^{-1}_k\left(1+p(X_k)^{-1}\right)p(\mu_k(F_k))=A'^{-1}_kp\left((1+X'_k)\widehat{\mu_k(F_k)}\right).
\]
From this we can conclude that the $F$-polynomial of $\DT$ with respect to the initial quiver $Q'$ is $F'_k=(1+X'_k)\widehat{\mu_k(F_k)}$.
\end{proof}

Next, let us consider $F$-polynomials of cluster DT transformations on subquivers. Let $Q$ be a quiver with vertex set $I$ and exchange matrix $\epsilon$. A \emph{full subquiver} $Q'$ of $Q$ is defined by a non-empty subset $I'$ of $I$ with an exchange matrix $\epsilon'=\epsilon|_{I'\times I'}$. Regarding cluster DT transformations on subquivers, we have the following theorem.

\begin{thm}[{\cite[Theorem 1.4.1]{Muller_max_green}}] If $Q$ admits a cluster DT transformation, then so does any full subquiver $Q'$ of $Q$.
\end{thm}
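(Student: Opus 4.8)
The statement is equivalent, via Keller's combinatorial characterization \cite{KelDT} of cluster DT transformations by reddening sequences, to the assertion that if $Q$ admits a reddening sequence then so does every full subquiver $Q'$ on a vertex subset $I'\subseteq I$. So the plan is to manufacture a reddening sequence for $Q'$ out of one for $Q$. The natural candidate is the \emph{restricted sequence}: given a reddening sequence $\bk=(k_1,\dots,k_l)$ for $Q$, let $\bk'$ be the subsequence obtained by deleting every mutation $k_m$ whose vertex lies outside $I'$. The goal is then to show that $\bk'$ is reddening for $Q'$, i.e. that the $C$-matrix of $Q'$ along $\bk'$ has all entries non-positive (equivalently, by Theorem~\ref{thm: tropical cluster duality}, that the $G$-matrix is $-\id_{I'}$ up to a permutation).

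The bookkeeping I would set up is on the principal-coefficient quivers. Run $\bk$ on $\tilde{Q}$, and at each intermediate quiver isolate the full sub-ice-quiver supported on $I'$ together with the frozen vertices $\{i':i\in I'\}$; its frozen-to-mutable arrows record a candidate ``restricted $C$-matrix'' for $Q'$. The heart of the argument is a compatibility lemma: the mutable part of this sub-ice-quiver, and the restricted $C$-vectors read off from it, agree at each stage with what one obtains by running $\bk'$ on the principal-coefficient quiver of $Q'$, \emph{after discarding those $C$-vectors of $Q$ not supported on $\bZ^{I'}$}. Granting the lemma, at the end of $\bk$ the $C$-matrix of $Q$ is $-\id_I$; the $I'$-rows supported on $\bZ^{I'}$ are exactly $\{-e_i:i\in I'\}$, so the restricted $C$-matrix of $Q'$ along $\bk'$ is $-\id_{I'}$ up to permutation, whence $\bk'$ is a reddening sequence for $Q'$ (and $\bk'(Q')\cong Q'$ follows as in \cite{GS2}).

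The main obstacle is proving this compatibility lemma, and in particular controlling the skipped steps $k_m\notin I'$: such a mutation genuinely alters both the $I'\times I'$ block of the mutable exchange matrix and, a priori, the $I'$-coordinates of the surviving $C$-vectors, so ``restriction'' is not literally ``forgetting the outside mutations'' in a static diagram — one must show the cumulative effect of the skipped mutations is invisible to the reddening condition. I would attack this by induction on the length of $\bk$, splitting into the case $k_m\in I'$ (where the mutation of the sub-ice-quiver matches a mutation of the principal-coefficient quiver of $Q'$ on the nose, directly from the mutation rule for $\epsilon$ restricted to $I'\times I'$) and the case $k_m\notin I'$ (where one argues, using sign coherence, Theorem~\ref{thm: sign coherence}, that the change is concentrated in the data being discarded or is compensated within it). An alternative route that sidesteps the step-by-step combinatorics is a restriction principle for cluster scattering diagrams: the cluster scattering diagram $\fD_{Q'}$ is governed by the slice of $\fD_Q$ along the coordinate subspace $\bR^{I'}\subseteq\bR^{I}$, and this slice sends the negative orthant to the negative orthant, so the property ``$-\mathcal{C}^+$ is a chamber of the cluster complex'' — which, by \cite{KelDT,GHKK}, is equivalent to admitting a reddening sequence — descends from $Q$ to $Q'$. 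Either way, the non-formal input is precisely this restriction-compatibility, and that is where I expect essentially all of the work to lie.
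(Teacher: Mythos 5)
Your primary approach --- take a reddening sequence $\bk$ of $Q$, delete the mutations at vertices outside $I'$, and hope the residual sequence reddens $Q'$ --- is precisely the naive route the paper explicitly rules out: the very next paragraph after this theorem statement reads ``it is in general not true that if $\mu_\bk$ is a reddening sequence on $Q$, then the restriction of $\mu_\bk$ to $Q'$ will also be a reddening sequence.'' The ``compatibility lemma'' you identify as the crux is false as stated: a skipped mutation $\mu_{k_m}$ with $k_m\notin I'$ genuinely changes the $I'\times I'$ block of the exchange matrix and hence the trajectory of the restricted $C$-vectors, and sign coherence supplies no mechanism by which the cumulative effect of the skipped steps is ``compensated within the discarded data.'' So the first half of the proposal, which you present as the main argument, collapses.

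Your ``alternative route'' through scattering diagrams is in fact the right idea and is essentially what Muller does: phrase the existence of a reddening sequence as $-e_i$ being a $g$-vector for every $i$ (i.e., the negative orthant being a cluster chamber), and show this property descends to full subquivers via the relationship between $\fD_{Q'}$ and $\fD_Q$. But you relegate it to a sidebar and leave its sole non-formal step --- how exactly $\fD_{Q'}$ arises from $\fD_Q$, and why membership of $-\mathcal{C}^+$ in the cluster complex descends --- entirely unproved, while conceding that ``essentially all of the work'' lies there. For comparison, the paper's own Proposition~\ref{prop: DT for subquivers} relates the two scattering diagrams not by slicing $\fD_Q$ along $\bR^{I'}$ but by degenerating the initial wall functions $1+X_j\rightsquigarrow 1+tX_j$ for $j\notin I'$ and letting $t\to 0$, so that $\fD_Q$ specializes to $\fD_{Q'}$ times a lineality factor; that is the mechanism you would want to make precise. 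A further small gap: at the end of a reddening sequence the $C$-matrix of $Q$ is $-\id$ only up to a permutation $\sigma$ of $I$, and your readoff of ``the $I'$-rows supported on $\bZ^{I'}$ are exactly $\{-e_i:i\in I'\}$'' tacitly assumes $\sigma$ preserves $I'$. Finally, note that the paper does not prove this theorem at all; it cites it from Muller, so there was no in-paper argument to match.
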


However, it is in general not true that if $\mu_\bk$ is a reddening sequence on $Q$, then the restriction of $\mu_\bk$ to $Q'$ will also be a reddening sequence. Nevertheless, we can still relate the $F$-polynomials of $\DT$ on a full subquiver $Q'$ to those of $\DT$ on the whole quiver $Q$.

\begin{prop}[Relating $F$-polynoimials of $\DT$ for full subquivers]\label{prop: DT for subquivers} Let $Q'$ be a subquiver of $Q$ defined by a subset $I'\subset I$. Suppose the cluster DT transformation exists on $Q$. Then the $F$-polynomials of the DT transformation on $Q'$ can be obtained from those for $Q$ with indices in $I'$ by substituting $X_j=0$ for all $j\notin I'$.
\end{prop}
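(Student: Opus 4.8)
Write $\pi\colon\bQ[\bX]\to\bQ[\{X_i\}_{i\in I'}]$ for the ring homomorphism sending $X_j\mapsto 0$ for $j\notin I'$ and fixing the remaining variables. Since every $F$-polynomial has constant term $1$, $\pi$ sends $F$-polynomials to $F$-polynomials and in particular never produces a vanishing denominator. I would first reduce to the case $|I\setminus I'|=1$: if $I'\subset J\subset I$ with $|J\setminus I'|=1$ and the statement is known for the pairs $(Q,Q_J)$ and $(Q_J,Q_{I'})$, then it follows for $(Q,Q_{I'})$ because the two substitutions compose; an induction on $|I\setminus I'|$ then reduces everything to deleting a single vertex, which I will call $l$, so that $\pi$ just sets $X_l=0$.

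\textbf{Restriction combinatorics.} The second step is to record the elementary compatibility: for any mutation sequence $\bk$ all of whose entries lie in $I'$, the exchange matrix, the $C$-matrix, and the $F$-polynomials of $\mu_\bk Q$ restrict to those of $\mu_\bk Q'$. Precisely, by induction on the length of $\bk$, using that each of the mutation rules (for $\epsilon$, for the $C$-matrix viewed inside the extended exchange matrix, and \eqref{eq: F mutation} for the $F$-polynomials) only involves entries indexed by the vertices actually occurring in it, one gets $\epsilon^{(\bk)}_{ij}=\bar\epsilon^{(\bk)}_{ij}$ and $c^{(\bk)}_{ij}=\bar c^{(\bk)}_{ij}$ for $i,j\in I'$, that $c^{(\bk)}_{il}=0$ for $i\in I'$, that $F^{(\bk)}_l=1$, and that $F^{(\bk)}_i=\bar F^{(\bk)}_i$ for $i\in I'$ (bars denoting the data of $\mu_\bk Q'$). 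In particular the colour of every vertex of $I'$ agrees in $\mu_\bk Q$ and in $\mu_\bk Q'$. By Muller's theorem $Q'$ admits a cluster DT transformation, so its $\DT$ $F$-polynomials $\bar F_i$ ($i\in I'$) are well defined and independent of the reddening sequence used to compute them, and similarly for the $\DT$ $F$-polynomials $F_i$ ($i\in I$) of $Q$.

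\textbf{The comparison, and the main obstacle.} Fix a reddening sequence $\bk$ of $Q$ (absorbing the terminal isomorphism $\sigma$ into a relabelling); then $F_i$ is the $F$-polynomial carried along $\bk$, and I would compute $\pi(F_i)$ by tracking the running $F$-polynomials along $\bk$ and applying $\pi$. Wherever $\bk$ mutates at a vertex $k\in I'$ \emph{and} the running $C$-vector of $k$ is supported on $I'$, the restriction identities above together with the facts that $F^{(\cdot)}_l$ specializes to $1$ show that $\pi$ intertwines this step with the corresponding step of the induced sequence $\bk|_{I'}$ on $Q'$; one must similarly analyse the steps at $l$ and the steps at $k\in I'$ whose running $C$-vector picks up a nonzero $X_l$-exponent (here one term of \eqref{eq: F mutation} is annihilated by $\pi$, which is where the reddening hypothesis genuinely enters — along a \emph{non}-reddening sequence the naive commutation of $\pi$ with \eqref{eq: F mutation} already fails). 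The key input, which I expect to be the technically hardest part, is showing that this bookkeeping around the mutations at $l$ is consistent, so that after applying $\pi$ the whole run of $\bk$ computes exactly the run of $\bk|_{I'}$ on $Q'$; it is the $F$-polynomial refinement of (the subquiver half of) Muller's theorem. The cleanest route to it is probably the cluster scattering diagram of Gross--Hacking--Keel--Kontsevich: the scattering diagram of $Q'$ is the slice of that of $Q$ by the subspace spanned by $\{e_j\}_{j\in I'}$, the $\DT$ $F$-polynomials are the wall functions on the walls bounding the all-negative chamber (whose existence as a chamber is equivalent to the existence of $\DT$), and restricting these wall functions to the slice is exactly the substitution $X_l=0$. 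Feeding this identification back through Proposition~\ref{prop: DT for adjacent clustsers} (which governs how the $\DT$ $F$-polynomials move between adjacent seeds, via the substitution $\widehat{(\cdot)}$ of \eqref{eq: reverse X mutation}, with which $\pi$ is easily checked to commute) and the restriction lemma of the second step then yields $\pi(F_i)=\bar F_i$ for all $i\in I'$.
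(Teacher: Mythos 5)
Your proposal is not a complete proof, and the key step you outsource to scattering diagrams contains two factual errors which, as far as I can tell, cannot be repaired as stated.

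Your first two reductions (pass to deleting a single vertex $l$, and the ``restriction combinatorics'' for mutation sequences supported on $I'$) are fine, but you correctly observe they do not suffice on their own, because a reddening sequence for $Q$ need not restrict to a reddening sequence for $Q'$ and may mutate at $l$ arbitrarily often. You then acknowledge that the mutation-tracking step around mutations at $l$ is the hard part and you do not carry it out; so the weight of the argument falls entirely on the scattering-diagram route. There the two claims you lean on are not true. First, the cluster scattering diagram $\fD_{Q'}$ is \emph{not} the slice of $\fD_Q$ by the coordinate subspace indexed by $I'$: the scattering diagram lives in (a copy of) $\mathbb{R}^I$ with the $X_j$ appearing as formal exponential variables on the wall functions, not as coordinates on the base space, so geometrically intersecting $\fD_Q$ with a subspace has nothing to do with the substitution $X_l=0$ --- a wall meeting the subspace would still carry $X_l$ in its function. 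Second, the DT $F$-polynomials $F_i$ are \emph{not} the wall functions on the walls bounding the all-negative chamber; they arise from the separation formula for the theta functions $\vartheta_{-e_i}$ (equivalently, from broken lines from the positive to the negative chamber), and wall functions and $F$-polynomials are different objects. So ``restricting these wall functions to the slice'' is not a meaningful operation that produces $\bar F_i$.

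What the paper actually does with scattering diagrams is a \emph{degeneration}, not a slice: one builds a one-parameter family $\fD_t$ by attaching $1+tX_j$ (instead of $1+X_j$) to the $j$-th initial wall for $j\notin I'$, and observes that $\fD_1=\fD_Q$ while $\fD_0$ is $\fD_{Q'}$ times a lineality space $\mathbb{R}^{|I\setminus I'|}$. Along the DT wall-crossing path, $\DT_t(A_i)$ is a polynomial in $t$ obtained from $\DT_Q(A_i)$ by replacing each $X_j$ ($j\notin I'$) with $tX_j$; setting $t=1$ recovers $\DT_Q$, and setting $t=0$ (equivalently, setting those $X_j$ to $0$) recovers $\DT_{Q'}$. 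This degeneration is the precise statement behind the vague ``restriction is substitution'' you want; if you replace the slice/wall-function argument by this one-parameter family, your surrounding reductions are consistent with the paper's proof, but as written the central step has a genuine gap.
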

\begin{proof} The proof of this proposition is based on the idea of degeneration of scattering diagrams \cite{BFMN}. By following the construction in \cite{GHKK}, one can construct a scattering diagram from any quiver $Q$ with a choice of functions attached to the initial walls. To obtain a degeneration of the scattering diagram, we choose to attach the function $1+tX_j$ instead of $1+X_j$ to the $j$th initial wall for each $j\notin I'$. We denote the resulting consistent scattering diagram by $\fD_t$. Note that $\fD_1$ is the same as the ordinary cluster scattering diagram $\fD_Q$ for the quiver $Q$, and $\fD_0$ is the product of the ordinary cluster scattering diagram $\fD_{Q'}$ with a lineality space $\mathbb{R}^{|I\setminus I'|}$.

The $\DT$ transformation corresponds to a path $\gamma$ that goes from the positive chamber to the negative chamber of the scattering diagram $\fD_t$, and for each cluster variable $A_i$, $\DT_t(A_i)$ is equal to the outcome of the wall-crossing transformation of $A_i$ along the path $\gamma$. With the extra parameter $t$, $\DT_t(A_i)$ is now a polynomial in $t$ (it is a polynomial because $t$ only appears along with the variables $X_j$ for $j\notin I'$ inside the $F$-polynomial factor). Setting $t=1$ recovers $\DT_{t=1}(A_i)=\DT_Q(A_i)$ for each $i\in I$, which admits a separation formula in the form of \eqref{eq: DT F polynomial}. On the other hand, setting $t=0$ only extracts the constant term (with respect to $t$) from $\DT_t(A_i)$. We observe that the consistent scattering diagram $\fD_t$ is the same as the consistent scattering diagram $\fD_Q$ with each $X_j$ substituted by $tX_j$ for all $j\notin I$. Therefore $\DT_t(A_i)$ is the same as $\DT_Q(A_i)$ with $X_j$ substituted by $tX_j$ for all $j\notin I$. Thus, setting $t=0$ is the same as setting $X_j=0$ in the separation formula for $\DT_Q(A_i)$, and hence we can conclude that for $i\in I$, $\DT_{Q'}(A_i)=A_i^{-1}p\left(F_i\big|_{X_j=0 \text{ for all $j\notin I'$}}\right)$. 
\end{proof}

By combining Proposition \ref{prop: DT for subquivers} and Lemma \ref{lem: setting to 0}, we obtain the following corollary, which is useful when computing $F$-polynomials of DT transformations for full subquivers.

\begin{cor}[Truncation for full subquivers]\label{cor: truncation} Let $Q'$ be a full subquiver of a quiver $Q$ that admits a reddening sequence. Suppose $i$ is a vertex in $Q'$ whose $F$-polynomial of $\DT_Q$ is equal to the ideal function of a simply-labeled poset $F_{(P,L)}$. Then the $F$-polynomial of $\DT_{Q'}$ at $i$ is equal to $F_{(P',L')}$, where $(P',L')$ is a simply-labeled poset obtained from $(P,L)$ by deleting the anti-ideal in $P$ generated by elements labeled by $X_j$ with $j$ not in $Q'$.
\end{cor}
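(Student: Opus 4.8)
The plan is to chain together the two results that Corollary~\ref{cor: truncation} explicitly advertises: Proposition~\ref{prop: DT for subquivers}, which tells us how to obtain the $F$-polynomials of $\DT_{Q'}$ from those of $\DT_Q$ by a variable substitution, and Lemma~\ref{lem: setting to 0}, which tells us how setting some labels of a labeled poset to zero simplifies the ideal function. The only content beyond invoking these is to check that the hypotheses line up and that the combinatorial operation described in Lemma~\ref{lem: setting to 0} is exactly the ``deleting the anti-ideal generated by the bad vertices'' operation stated in the corollary.

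First I would apply Proposition~\ref{prop: DT for subquivers} with the subset $I'$ equal to the vertex set of $Q'$. Since $i$ is a vertex of $Q'$, the proposition says that the $F$-polynomial of $\DT_{Q'}$ at $i$ is obtained from $F_i$, the $F$-polynomial of $\DT_Q$ at $i$, by substituting $X_j = 0$ for every $j \notin I'$. By hypothesis $F_i = F_{(P,L)}$ for the simply-labeled poset $(P,L)$, so I need to identify the result of that substitution with an ideal function. Let $S \subset P$ be the set of poset elements whose label $L_s$ equals $X_j$ for some vertex $j$ not in $Q'$. Performing the substitution $X_j = 0$ for all such $j$ turns the labeling $L$ into a new labeling $L^{(0)}$ that agrees with $L$ off $S$ and is identically zero on $S$. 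Then $F_i\big|_{X_j = 0,\ j \notin I'} = F_{(P, L^{(0)})}$ essentially by definition of the ideal function, since each monomial $L_I = \prod_{i \in I} L_i$ either contains a factor $X_j$ with $j \notin I'$ (and hence vanishes after substitution, matching $L^{(0)}_I = 0$) or does not (and is unchanged).

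Next I would invoke Lemma~\ref{lem: setting to 0} with this $S$: it gives $F_{(P, L^{(0)})} = F_{(P_S, L_S)}$, where $P_S$ is the complement in $P$ of the anti-ideal generated by $S$ and $L_S = L^{(0)}|_{P_S} = L|_{P_S}$ (the last equality because $P_S$ is disjoint from $S$, so $L^{(0)}$ and $L$ agree there). This is exactly the labeled poset $(P',L')$ described in the statement: $P'$ is $P$ with the anti-ideal generated by the elements labeled by forbidden $X_j$'s removed, and $L'$ is the restriction of the original simple labeling. Since $L'$ is a restriction of a simple labeling it is again simple, so $(P',L')$ is a simply-labeled poset and $F_{(P',L')}$ lies in $\bZ[\bX]$ with positive coefficients, as claimed. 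Combining the two displayed equalities yields that the $F$-polynomial of $\DT_{Q'}$ at $i$ equals $F_{(P',L')}$.

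I do not anticipate a genuine obstacle here — the corollary is a bookkeeping synthesis of two already-proved statements. The only point requiring a little care is the bookkeeping in the substitution step: one should be precise that ``substitute $X_j = 0$'' commutes with the sum-over-ideals expression for $F_{(P,L)}$, i.e. that the constant-term-preserving substitution on the polynomial $F_i$ matches the relabeling operation on the poset. This is immediate from the monomial-by-monomial description above, but it is worth spelling out so that the reader sees that no cancellation or collision of monomials occurs that would spoil the identification with an ideal function. A secondary minor point is to note explicitly that the set $S$ may be empty (if every label of $P$ comes from a vertex of $Q'$), in which case $P_S = P$ and the corollary says nothing changes, consistent with the trivial case $I' = I$.
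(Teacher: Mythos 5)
Your proof is correct and follows exactly the route the paper intends: it combines Proposition~\ref{prop: DT for subquivers} (to reduce to the $X_j=0$ substitution) with Lemma~\ref{lem: setting to 0} (to interpret that substitution as deleting the anti-ideal generated by the zero-labeled elements). The paper itself only gestures at this via the phrase ``by combining Proposition~\ref{prop: DT for subquivers} and Lemma~\ref{lem: setting to 0},'' so your write-up is just a careful unfolding of the same argument, with sensible attention to the bookkeeping that the substitution commutes with the sum over ideals.
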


Lastly, we would like to consider triangular extensions of quivers. Let $Q_1$ be a quiver with vertex set $I_1$ and exchange matrix $\epsilon_1$, and let $Q_2$ be a quiver with vertex set $I_2$ and exchange matrix $\epsilon_2$. A \emph{triangular extension} of $Q_1$ by $Q_2$ is a quiver $Q$ with vertex set $I=I_1\sqcup I_2$ and exchange matrix $\epsilon=\begin{pmatrix}\epsilon_1 & \delta \\ -\delta & \epsilon_2
\end{pmatrix}$ where all entries of $\delta$ are non-negative. The following theorem describes a way to construct reddening sequences on the triangular extension $Q$ from reddening sequences of $Q_1$ and $Q_2$.

\begin{thm}[{\cite[Theorem 4.5, Remark 4.6]{CL}}]\label{thm: reddening for triangular extension} If $\bk_1$ is a maximal green (resp. reddening) sequence of $Q_1$ and $\bk_2$ is a maximal green (resp. reddening) sequence of $Q_2$, then the concatenation $(\bk_1,
\bk_2)$ is a maximal green (resp. reddening) sequence of the triangular extension of $Q_1$ by $Q_2$. 
\end{thm}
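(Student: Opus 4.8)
The plan is to follow the $C$-matrix of $Q$ along the concatenated sequence $(\bk_1,\bk_2)$, working throughout in the splitting $\bZ^{I'}=\bZ^{I_1'}\oplus\bZ^{I_2'}$ of the lattice in which $c$-vectors live, and to show that after $(\bk_1,\bk_2)$ every row is entrywise non-positive; this is exactly the reddening statement, and the maximal green refinement will fall out along the way by checking that each individual mutation is carried out at a green vertex. The two facts I would use constantly are sign coherence (Theorem~\ref{thm: sign coherence}) and the invertibility of $C$-matrices (a consequence of Theorem~\ref{thm: tropical cluster duality}): the latter guarantees that $c$-vectors are never zero, so that a nonzero projection of a sign-coherent vector is again sign-coherent and nonzero, with the same sign.

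I would first analyze the effect of $\bk_1$. Because $\bk_1$ only mutates at vertices of $I_1$, the $c$-vectors of the $I_1$-vertices start inside $\bZ^{I_1'}$ and remain there — mutating at a vertex of $I_1$ replaces each $c$-vector by $\pm$ itself or by itself plus an integer multiple of another $I_1$-vertex's $c$-vector — and they evolve exactly as the $C$-matrix of $Q_1$ does. Since $\bk_1$ is reddening for $Q_1$, after $\bk_1$ each $i\in I_1$ has $c$-vector $(-e_{\sigma_1(i)},0)$ (with $\sigma_1$ the quiver isomorphism attached to the reddening sequence $\bk_1$) and is therefore red. Dually, the $\bZ^{I_2'}$-component of the $c$-vector of a vertex $c\in I_2$ is never touched during $\bk_1$ (it would only change by a multiple of the identically-zero $\bZ^{I_2'}$-component of an $I_1$-vertex's $c$-vector), so it stays equal to $e_c$; sign coherence then forces the whole $c$-vector of $c$ to be non-negative, so $I_2$ is entirely green after $\bk_1$. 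The one input from the general theory that I would invoke at this stage is that a reddening sequence on an ice quiver negates its frozen coefficient data, applied to $Q_1$ with the vertices of $I_2$ frozen: this gives that, after $\bk_1$, the exchange-matrix block recording the arrows between $I_1$ and $I_2$ has been reversed (is non-positive) while the full subquiver on $I_2$ is still $Q_2$, so $\mu_{\bk_1}(Q)$ is again a triangular extension, now of $Q_2$ by $\mu_{\bk_1}(Q_1)$.

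I would then run $\bk_2$, which only touches $I_2$. The crucial observation is that the projection $\pi_2\colon\bZ^{I'}\to\bZ^{I_2'}$ intertwines the $c$-vector mutation rule along $\bk_2$: by induction, for every $k\in I_2$ one has $\pi_2(c_k)=c_k^{(2)}$, the corresponding $c$-vector computed for $Q_2$ with fresh principal coefficients. The inductive step goes through because the exchange-matrix entries among $I_2$ agree with those of $Q_2$ and because the sign of $c_k$ used in the mutation rule equals the sign of $\pi_2(c_k)$ — this is where sign coherence and the non-vanishing of $c$-vectors enter. Since $\bk_2$ is reddening for $Q_2$, at the end $\pi_2(c_k)=-e_{\sigma_2(k)}\le 0$ and nonzero for each $k\in I_2$, and sign coherence upgrades this to $c_k\le 0$, so all of $I_2$ is red after $(\bk_1,\bk_2)$. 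It remains to check that the $I_1$-vertices stay red while $\bk_2$ runs. When $\bk_1$ is maximal green this is immediate: the $\bZ^{I_1'}$-part of each $I_2$-vertex's $c$-vector vanishes after $\bk_1$, so the mutations of $\bk_2$ do not move the $I_1$-rows at all, and every mutation performed along $(\bk_1,\bk_2)$ is at a green vertex — which also yields the maximal green statement.

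I expect the general reddening case of this last step to be the main obstacle: then the $\bZ^{I_1'}$-part of the $I_2$-vertices' $c$-vectors need not vanish after $\bk_1$, so running $\bk_2$ can in principle perturb the rows indexed by $I_1$ — which are non-positive but have a nonzero $\bZ^{I_1'}$-part — and one must argue, using only sign coherence, that they never pick up a positive entry. There is no obvious shortcut through the maximal green case, since a quiver admitting a reddening sequence need not admit a maximal green one. A cleaner and likely more robust route, in the spirit of the scattering-diagram arguments already used for Proposition~\ref{prop: DT for subquivers}, is to show that the cluster scattering diagram of the triangular extension fibers over that of $Q_1$ with fibers modeled on that of $Q_2$, and that the chamber path realizing $(\bk_1,\bk_2)$ from the positive to the negative chamber is the concatenation of a lift of the DT path of $Q_1$ with a DT path of $Q_2$ run inside a single fiber; making that fibered structure precise is then the real content of the argument.
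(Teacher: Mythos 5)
The statement you are proving is cited from [CL]; the paper does not reprove it, so there is no in-paper proof to compare against. Your high-level plan---track the $C$-matrix of the triangular extension along $(\bk_1,\bk_2)$ in the splitting $\bZ^{I'}=\bZ^{I_1'}\oplus\bZ^{I_2'}$ and invoke sign coherence---is the right one, and the first half of the execution (the effect of $\bk_1$ on the $I_1$-rows and on the $\bZ^{I_2'}$-components of the $I_2$-rows) is correct.

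The genuine issue is in the final step, but the dichotomy you draw there between the maximal green and general reddening case is a red herring: the $\bZ^{I_1'}$-part of $c_j$ vanishes after $\bk_1$ for \emph{every} $j\in I_2$, whether $\bk_1$ is maximal green or merely reddening, and this is also what you must actually prove in the maximal green case---it is not ``immediate''. The missing lemma is that throughout $\bk_1$ the $I_1\times I_2$ block $\delta'$ of the exchange matrix satisfies $\delta'=c^{(1)}\delta$, where $c^{(1)}$ is the $C$-matrix of $Q_1$. One shows this by induction: initially $\delta'=\delta=\id\cdot\delta$; the mutation rule for a coefficient block agrees with left multiplication by the mutated $c^{(1)}$ as long as the row $\delta'_k$ is sign-coherent with $c^{(1)}_k$; and $\delta\geq 0$ forces $\delta'_k=c^{(1)}_k\delta$ to carry the sign of $c^{(1)}_k$, which is itself sign-coherent by Theorem~\ref{thm: sign coherence}. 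With this lemma, mutating at $k\in I_1$ contributes $[\epsilon_{jk}]_+\,c_k$ to $c_j$ ($j\in I_2$) if $k$ is green, or $[-\epsilon_{jk}]_+\,c_k$ if $k$ is red; since $\epsilon_{jk}=-\delta'_{kj}$ has the opposite sign to $c_k$, the multiplier is zero in either case, so $c_j=(0,e_j)$ for all $j\in I_2$ at every step of $\bk_1$. After $\bk_1$ the $I_1\times I_2$ block is $-\sigma_1\delta\leq 0$, and the same argument applied with the roles reversed (the new quiver is a triangular extension of $Q_2$ by $\mu_{\bk_1}(Q_1)$) shows the $I_1$-rows are never touched during $\bk_2$ while the $I_2$-rows evolve exactly as the $C$-matrix of $Q_2$. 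This closes the reddening case uniformly, with no need for the scattering-diagram detour you propose; the maximal green refinement then drops out because at every step the mutated vertex has the same color in $Q$ as in the corresponding principal-coefficient $Q_1$ or $Q_2$.
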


Based on this theorem, we can view the cluster DT transformation $\DT_Q$ as the composition of those of its full subquivers $\DT_{Q_2}\circ \DT_{Q_1}$. Moreover, we prove the following result for $F$-polynomials of cluster DT transformations on triangular extensions.

\begin{prop}\label{prop: F-polynomial for triangular extensions} Suppose $Q$ is a triangular extension of $Q_1$ by $Q_2$. Let $\delta$ be the matrix encoding the arrows from vertices of $Q_1$ to vertices of $Q_2$. Suppose both $Q_1$ and $Q_2$ admit DT transformations, and let $F'_i$ be the $F$-polynomial of $\DT_{Q_1}$ or $\DT_{Q_2}$ at the vertex $i$ (depending on whether $i$ is in $Q_1$ or $Q_2$). Then the $F$-polynomials of the DT transformation on $Q$ are given by
\[
F_i=\left\{\begin{array}{ll} F'_i & \text{if $i$ is in $Q_1$}, \\
F'_i\big|_{X_j\rightsquigarrow X_j\prod_kF'^{\delta_{kj}}_k} & \text{if $i$ is in $Q_2$}.\end{array}\right.
\]
\end{prop}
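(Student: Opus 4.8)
The plan is to combine Theorem \ref{thm: reddening for triangular extension}, which says $\DT_Q = \DT_{Q_2}\circ\DT_{Q_1}$ as cluster transformations, with the separation formula (Theorem \ref{thm: separation formula}) and the tropical data for DT (Equation \eqref{eq: DT F polynomial}). First I would record what happens to the initial cluster Poisson variables under $\DT_{Q_1}$: since $\DT_{Q_1}$ is the DT transformation of the full subquiver $Q_1$, applying the second half of \eqref{eq: DT F polynomial} with exchange matrix $\epsilon_1$ gives $\DT_{Q_1}(X_i) = X_i^{-1}\prod_{j\in I_1}F'^{(\epsilon_1)_{ij}}_j$ for $i\in I_1$, while for $i\in I_2$ the variable $X_i$ is untouched by $\DT_{Q_1}$ because $\DT_{Q_1}$ only involves mutations at vertices of $Q_1$ — so $\DT_{Q_1}(X_i)=X_i$ for $i\in I_2$. (Here one must be slightly careful: "the $F$-polynomial of $\DT_{Q_1}$" a priori means the $F$-polynomial computed inside the cluster Poisson algebra of $Q_1$, in the variables $\{X_j\}_{j\in I_1}$; I would note that the mutation sequence $\bk_1$ lives entirely in $I_1$ and the $C$-matrix entries $(c_1)_{ij}$ for $i,j\in I_1$ coincide whether computed in $Q$ or $Q_1$ — this is exactly the fact that mutations at $Q_1$-vertices do not see $Q_2$ because $\delta$ has a definite sign, so no $Q_2\to Q_1$ arrows are ever created. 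This justifies treating $F'_i$ as a polynomial in the full variable set that happens to only involve $\{X_j\}_{j\in I_1}$.)

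Next I would handle the two cases. For $i\in I_1$: the initial cluster variable $A_i$ is the same in $Q$ and $Q_1$, and $\DT_{Q_2}$ fixes $A_i$ (it mutates only at $Q_2$-vertices, and $A_i$ is a cluster variable of the initial $Q_2$-less part), so $\DT_Q(A_i) = \DT_{Q_1}(A_i) = A_i^{-1}p(F'_i)$, giving $F_i = F'_i$ as claimed. (One subtlety: $p$ is taken with respect to $\epsilon$, not $\epsilon_1$; but since $F'_i$ only involves $X_j$ with $j\in I_1$ and $p(X_j)=\prod_{k}A_k^{\epsilon_{jk}}$ — which for $j\in I_1$ and the relevant exponents still lands in the right place — the $G$-vector being $-e_i$ forces the identification. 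Actually the cleanest argument is that $\DT_Q$ is a cluster transformation, the $G$-vector of $\DT_Q(A_i)$ is $-e_i$ by tropical duality, and $\DT_Q(A_i)=\DT_{Q_1}(A_i)$ as elements of $\bQ(\bA)$, so its $F$-polynomial is determined and equals $F'_i$.)

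For $i\in I_2$: start from $\DT_Q(A_i) = \DT_{Q_2}\bigl(\DT_{Q_1}(A_i)\bigr)$. Since $\DT_{Q_1}$ fixes every $Q_2$-vertex variable, $\DT_{Q_1}(A_i)=A_i$, so $\DT_Q(A_i)=\DT_{Q_2}(A_i)=A_i^{-1}p_{Q_2}\bigl(F'_i(\{X_j\}_{j\in I_2})\bigr)$ where now I must be careful that $p_{Q_2}$ uses $\epsilon_2$. The point is that to express $\DT_Q(A_i)$ in terms of the \emph{initial} cluster Poisson variables of $Q$ I should not stop here: rather, I should write $\DT_Q(A_i) = \DT_{Q_1}\bigl(\DT_{Q_2}(A_i)\bigr)$ using that $\DT_{Q_1}$ and $\DT_{Q_2}$ commute? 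No — they need not commute, but Theorem \ref{thm: reddening for triangular extension} fixes the order $\DT_{Q_2}\circ\DT_{Q_1}$. So instead I would compute directly: $\DT_Q(A_i)$ is obtained by first applying $\DT_{Q_1}$ (which fixes $A_i$) and then $\DT_{Q_2}$, and $\DT_{Q_2}(A_i) = A_i^{-1}p(F'_i(\bX))$ — but this separation formula for $\DT_{Q_2}$ is written relative to the cluster Poisson variables \emph{after} applying $\DT_{Q_1}$ is irrelevant since $\DT_{Q_1}$ fixed them; however the $p$-map and the Poisson variables that $F'_i$ refers to must be re-expressed. The correct bookkeeping: $F_i(\bX)$ is defined by $\DT_Q(A_i) = A_i^{-1}p(F_i(\bX))$ with $\bX$ the initial Poisson variables of $Q$. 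We have $\DT_Q(A_i) = \DT_{Q_2}(A_i)$, and the $F$-polynomial of $\DT_{Q_2}(A_i)$ \emph{as a cluster variable of the $Q_2$-cluster-algebra sitting inside $\cA_Q$} is $F'_i$ in the variables $Y_j := p(X_j)$-preimages for $j\in I_2$ — but the Poisson variables of the $Q_2$-subalgebra are not the restrictions of the $X_j$; they are modified by the $\delta$-arrows. Precisely, the exchange matrix of $Q$ restricted to how $\DT_{Q_2}$ acts is $\epsilon_2$ on $I_2\times I_2$, and the separation formula \eqref{eq: DT F polynomial} for $\DT_{Q_2}$ inside $\cX_Q$ reads $\DT_{Q_2}(X_i) = X_i^{-1}\prod_{j}F_j(\bX)^{\epsilon_{ij}}$ over all $j\in I$; comparing with the intrinsic $Q_2$-formula $\DT_{Q_2}(X_i) = X_i^{-1}\prod_{j\in I_2}F'^{(\epsilon_2)_{ij}}_j$ and using $\epsilon_{ij}=(\epsilon_2)_{ij}$ for $i,j\in I_2$ while $\epsilon_{ij}=-\delta_{ji}$ for $i\in I_2, j\in I_1$, forces $F'_i$ (intrinsic to $Q_2$, in its own Poisson variables $X_j^{Q_2}$) to be obtained from $F_i$ (in the $Q$-variables) by the substitution relating $X_j^{Q_2}$ to $X_j\prod_k F'^{\delta_{kj}}_k$. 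Inverting this substitution yields exactly $F_i = F'_i\big|_{X_j\rightsquigarrow X_j\prod_k F'^{\delta_{kj}}_k}$.

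The cleanest route, which I would actually write, is to verify that the proposed formula satisfies the defining property: set $\tilde F_i := F'_i\big|_{X_j\rightsquigarrow X_j\prod_k F'^{\delta_{kj}}_k}$ for $i\in I_2$ and $\tilde F_i := F'_i$ for $i\in I_1$, and check that $A_i^{-1}p(\tilde F_i) = \DT_Q(A_i)$ by chasing $\DT_{Q_2}\circ\DT_{Q_1}$ through the $p$-map, using $p(X_j) = \prod_k A_k^{\epsilon_{jk}}$ and the two known separation formulas. The key computation is that the substitution $X_j\rightsquigarrow X_j\prod_k F'^{\delta_{kj}}_k$ is precisely what the $p$-map sees when passing from the $Q_2$-Poisson variables to the $Q$-Poisson variables: under $\DT_{Q_1}$, the cluster variables $A_k$ for $k\in I_1$ become $A_k^{-1}p(F'_k)$, and $p_{Q_2}(X_j) = \prod_{k\in I_2}A_k^{(\epsilon_2)_{jk}}\cdot\prod_{k\in I_1}A_k^{\delta_{kj}}$ becomes, after $\DT_{Q_1}$, exactly $p(X_j)\prod_{k\in I_1}F'^{\delta_{kj}}_k$ up to the monomial part that gets absorbed into the $G$-vector $-e_i$. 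I expect the main obstacle to be this last bookkeeping step: correctly tracking which factors land in the $G$-vector (Laurent monomial) part versus the $F$-polynomial part, and confirming that the $G$-vector of $\DT_Q(A_i)$ is indeed $-e_i$ throughout so that the entire non-monomial contribution is the $F$-polynomial — this is guaranteed by tropical cluster duality (Theorem \ref{thm: tropical cluster duality}) applied to $\DT_Q$, which exists by Theorem \ref{thm: reddening for triangular extension}. Once the $G$-vector is pinned down, matching the $F$-polynomial parts is a direct substitution check.
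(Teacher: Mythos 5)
Your overall architecture — decompose $\DT_Q$ via Theorem \ref{thm: reddening for triangular extension} as the mutation sequence $(\bk_1,\bk_2)$, handle the two index sets separately, and recognize that a substitution involving the $F'_k$'s and $\delta$ must appear — is the same as the paper's, and your treatment of $i\in I_1$ is essentially right. But there is a genuine error in the $i\in I_2$ analysis that your later caveats don't fully repair, and your "cleanest route" is left unexecuted.

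The specific error: you assert early on that ``for $i\in I_2$ the variable $X_i$ is untouched by $\DT_{Q_1}$ because $\DT_{Q_1}$ only involves mutations at vertices of $Q_1$ --- so $\DT_{Q_1}(X_i)=X_i$ for $i\in I_2$.'' This is false. Mutating at $k\in I_1$ changes $X_j$ for any $j$ adjacent to $k$, including $j\in I_2$ (see \eqref{eq: X mutation}); the $\delta$-arrows mean such adjacencies exist. In fact this is precisely the point: the paper's key computation is that after the sequence $\bk_1$ the cluster Poisson variable $X'_j$ of a $Q_2$-vertex $j$ is \emph{not} $X_j$ but rather $X_j\prod_{k\in I_1}F'^{\delta_{kj}}_k$ (the paper's Equation \eqref{eq: intermediate expression}). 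The non-triviality of this transformation is exactly what produces the substitution in the conclusion. You do eventually acknowledge (``the Poisson variables of the $Q_2$-subalgebra are not the restrictions of the $X_j$; they are modified by the $\delta$-arrows'') that something like this must be going on, but that directly contradicts your earlier claim, and you never derive the precise formula. Deriving it requires exactly the two facts the paper extracts from sign coherence: (a) the $C$-vector of each $j\in I_2$ stays a standard basis vector throughout $\bk_1$, so the $C$-matrix factor in the separation formula \eqref{thm: separation formula} contributes only $X_j$; and (b) after the reddening sequence $\bk_1$ the arrows between $I_1$ and $I_2$ are reversed, giving exchange-matrix entries $\delta_{kj}$ from $j\in I_2$ to $k\in I_1$, so the $F$-polynomial factor contributes $\prod_k F'^{\delta_{kj}}_k$.

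Also note a bookkeeping slip in your last paragraph: you write $p_{Q_2}(X_j) = \prod_{k\in I_2}A_k^{(\epsilon_2)_{jk}}\cdot\prod_{k\in I_1}A_k^{\delta_{kj}}$, but the $p$-map for $Q_2$ alone should not involve $A_k$ for $k\in I_1$ at all, and if you meant the $p$-map for $Q$ restricted to $j\in I_2$ then the exponent on $A_k$ for $k\in I_1$ should be $\epsilon_{jk}=-\delta_{kj}$, not $+\delta_{kj}$. This kind of sign confusion is what the paper avoids by working on the $\cX$-side throughout: it never leaves the cluster Poisson variables and simply substitutes the intermediate expression \eqref{eq: intermediate expression} into the $Q_2$-$F$-polynomials at the end. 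If you want to salvage your $\cA$-side chase, you would need to pin down the $G$-vector of $\DT_Q(A_i)$ (which you correctly note is $-e_i$ by Theorem \ref{thm: tropical cluster duality}) and then carefully separate the Laurent-monomial part from the $p(F_i)$ part under the composition; that is doable but strictly more painful than the paper's direct route.
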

\begin{proof} Let $\bk_1$ be a reddening sequence of $Q_1$ and let $\bk_2$ be a reddening sequence of $Q_2$. By Theorem \ref{thm: reddening for triangular extension}, the mutation sequence $(\bk_1,\bk_2)$ is a reddening sequence for the triangular extension $Q$. Since $(\bk_1,\bk_2)$ starts with $\bk_1$ and never mutates anywhere on $Q_1$ after finishing $\bk_1$, we know from the $F$-polynomial mutation formula \eqref{eq: F mutation} that $F_i=F'_i$ for all vertices $i$ in $Q_1$. 

On the other hand, if we view vertices of $Q_2$ as frozen during the mutation sequence $\bk_1$, then the arrows between $I_1$ and $I_2$ must change in the same way as the framing arrows in the quiver $\tilde{Q}_1$ with principal coefficients. By the sign coherence theorem \ref{thm: sign coherence}, there will never be arrows created between $I_2$ and the framing vertices in $\tilde{Q}_1$, and hence the $C$-vector of any vertex $j\in I_2$ remains a standard basis vector. After finishing the mutation sequence $\bk_1$, the arrows between $I_1$ and $I_2$ must be reversed such that there are $\delta_{kj}$ many arrows pointing from a vertex $j\in I_2$ to a vertex $k\in I_1$. Thus, according to Theorem \ref{thm: separation formula}, at the moment right after the mutation sequence $\bk_1$, the cluster Poisson variable $X'_j$ of a vertex $j\in I_2$ is expressed in terms of the initial cluster Poisson variables as
\begin{equation}\label{eq: intermediate expression}
X'_j=X_j\prod_{k\in I_1}F'^{\delta_{kj}}_k.
\end{equation}
In particular, since $\delta_{kj}\geq 0$, the right-hand side of \eqref{eq: intermediate expression} is always a polynomial. Finally, the remaining mutation sequence $\bk_2$ will give rise to $F$-polynomials $F'_j$ for each vertex $j\in I_2$ in terms of these intermediate cluster Poisson variables, and to re-express them in terms of the initial cluster Poisson variables, we just need to substitute \eqref{eq: intermediate expression} into $F'_j$'s, and that is what is precisely the formula claimed in this proposition.
\end{proof}

By combining Proposition \ref{prop: F-polynomial for triangular extensions} with Lemma \ref{lem: formula for pointed labeled posets}, we obtain the following corollary, which is useful when computing $F$-polynomials of DT transformations for triangular extensions.

\begin{cor}\label{cor: extension} In addition to the assumptions in Propsition \ref{prop: F-polynomial for triangular extensions}, let us suppose that all $F$-polynomials of $\DT_{Q_1}$ and $\DT_{Q_2}$ are equal to ideal functions of simply-labeled pointed posets $(P(i),L(i))$. Then for any vertex $j$ in $Q_2$, its $F$-polynomial of $\DT_Q$ is equal to the ideal function of a simply-labeled pointed poset obtained by attaching $\delta_{ij}$ many copies of the $(P(i),L(i))$ to any Hasse diagram vertex $X_j$ with an arrow from the lowest point of each $P(i)$ to that vertex labeled by $X_j$.
\end{cor}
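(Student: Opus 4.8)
The plan is to combine the two ingredients named in the statement: Proposition~\ref{prop: F-polynomial for triangular extensions}, which tells us how the $F$-polynomials transform under a triangular extension, and Lemma~\ref{lem: formula for pointed labeled posets}, which tells us how the ideal function of a pointed labeled poset relates to the ideal function of the poset with its minimal element removed. First I would fix a vertex $j$ in $Q_2$ and write, by Proposition~\ref{prop: F-polynomial for triangular extensions},
\[
F_j = F'_j\big|_{X_m\rightsquigarrow X_m\prod_k F'^{\delta_{km}}_k},
\]
where the substitution is applied to every cluster Poisson variable $X_m$ with $m$ in $Q_2$. Since $F'_j = F_{(P(j),L(j))}$ is an ideal function of a simply-labeled pointed poset, each monomial $L_I$ appearing in it is a product of variables $X_m$ corresponding to the labels of an ideal $I \subseteq P(j)$; after the substitution, each factor $X_m$ in that monomial becomes $X_m\prod_k F'^{\delta_{km}}_k = X_m\prod_k F_{(P(k),L(k))}^{\delta_{km}}$.

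The key computation is then to recognize the right-hand side as an ideal function of an enlarged poset. I would describe the target poset $\widehat{P}(j)$ explicitly: start with $P(j)$, and for each Hasse-diagram vertex carrying the label $X_m$ (for $m$ in $Q_2$, and in particular $m$ could be $j$ itself at the minimal vertex) and for each vertex $k$ in $Q_1$, attach $\delta_{km}$ disjoint copies of $(P(k),L(k))$ below that vertex, drawing an arrow from the minimal element of each attached copy up to the chosen vertex $X_m$. Concretely this realizes the claim in the corollary ``attaching $\delta_{ij}$ many copies of $(P(i),L(i))$ to any Hasse diagram vertex $X_j$''. The verification that $F_{(\widehat{P}(j),\widehat{L}(j))}$ equals $F_j$ proceeds by analyzing ideals of $\widehat{P}(j)$: an ideal $\widehat{I}$ of $\widehat{P}(j)$ restricts to an ideal $I$ of $P(j)$, and for each vertex $X_m$ of $P(j)$ that lies in $I$, the part of $\widehat{I}$ inside the attached copies of $(P(k),L(k))$ below that vertex is an arbitrary ideal of each such copy (this uses that the copies are attached only \emph{below} vertices of $P(j)$, so no order relation of $\widehat{P}(j)$ forces anything in the copies once the vertex above is in $\widehat{I}$; conversely if $X_m \notin I$ then nothing below it can be in $\widehat{I}$). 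Summing $\widehat{L}_{\widehat{I}}$ over all such $\widehat{I}$ therefore factors as $\sum_I \prod_{m : X_m \in I}\big(\prod_k F_{(P(k),L(k))}^{\delta_{km}}\big)\cdot L_I$ — wait, I need to be careful about which $X_m$ get modified: only the $m$ in $Q_2$; for those that are in $Q_1$, $P(j)$ has no such labels by construction since $F'_j$ only involves variables indexed by $Q_2$. This matches exactly the substituted expression above, so the two ideal functions agree.

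One subtlety I would address explicitly is the role of the minimal vertex of $P(j)$: by Lemma~\ref{lem: formula for pointed labeled posets}, $F'_j = 1 + L_{j_{\min}}F_{(P(j)',L(j)')}$, and the minimal vertex $j_{\min}$ is labeled $X_j$; the substitution $X_j \rightsquigarrow X_j\prod_k F'^{\delta_{kj}}_k$ turns the prefactor $L_{j_{\min}} = X_j$ into $X_j\prod_k F_{(P(k),L(k))}^{\delta_{kj}}$, and attaching the copies of $(P(k),L(k))$ below the minimal vertex of $\widehat{P}(j)$ keeps $\widehat{P}(j)$ pointed (its new minimal elements are the minimal elements of the attached copies, but once one descends into a copy one stays within a single copy and its own pointed structure is preserved — actually $\widehat{P}(j)$ may fail to be pointed if $\delta_{kj}\geq 2$ or several $k$ contribute, since then there are several incomparable copies hanging below; I would simply note that the relevant claim is that $F_{(\widehat P(j),\widehat L(j))}$ equals the correct $F$-polynomial, and pointedness per se is only invoked for the \emph{subquiver} pieces $P(k)$, which are pointed by hypothesis). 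The main obstacle, and the step deserving the most care in the write-up, is precisely this bookkeeping of ideals of the glued poset: making sure that ``attach below'' is interpreted so that an ideal of $\widehat P(j)$ decomposes as an ideal of $P(j)$ together with, for each in-ideal vertex, an independent choice of ideal in each attached copy, and that the empty-ideal/constant-term $1$'s line up. Everything else is a direct substitution into Proposition~\ref{prop: F-polynomial for triangular extensions} together with the definition of ideal function.
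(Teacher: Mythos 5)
The strategy is the right one — apply Proposition~\ref{prop: F-polynomial for triangular extensions} at the vertex $j$, expand $F'_j$ over ideals of $P(j)$, and recognize the substituted polynomial as an ideal function of a glued poset — and the key factoring computation you carry out is correct. However, you have the Hasse-diagram direction inverted, and this produces a spurious worry about pointedness.

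In the paper's convention an arrow $i\to j$ in a Hasse diagram means $i>j$, with arrows drawn pointing downward, so the minimum sits at the bottom. The corollary prescribes, for each Hasse vertex $p$ of $P(j)$ labeled $X_m$ and each $i$ in $Q_1$, drawing an arrow from the minimal element of each of the $\delta_{im}$ attached copies of $P(i)$ \emph{to} $p$. That means $\min P(i) > p$: the copies are grafted \emph{above} $p$ in the poset order, not below. Your description ``attach below that vertex'' and ``the copies are attached only below vertices of $P(j)$'' has it backwards, even though the ideal-counting logic you then write down (if $p\in\widehat I$ the attached copies contribute an arbitrary ideal each, if $p\notin\widehat I$ they must contribute the empty ideal) is exactly the correct logic for copies sitting above $p$. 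One can also see the intended direction in Figure~\ref{fig: first example}, where the chains $X_7\leftarrow X_9$, etc., sit above the cube poset with arrows pointing downward into it.

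Because of this inversion you wonder whether $\widehat P(j)$ stays pointed when several copies ``hang below'' the minimal vertex. This is not an issue: since every attachment adds material above some vertex of $P(j)$, the unique minimum of $P(j)$ remains the unique minimum of $\widehat P(j)$, so $\widehat P(j)$ is pointed exactly as the corollary asserts. This is also where Lemma~\ref{lem: formula for pointed labeled posets} slots in naturally as the paper intends: the minimal vertex of $P(j)$ is labeled $X_j$, $F'_j = 1 + X_j F_{(P(j)',L(j)')}$, and the substitution $X_j\rightsquigarrow X_j\prod_k F'^{\delta_{kj}}_k$ corresponds precisely to grafting the $\delta_{kj}$ copies of $P(k)$ above that minimal vertex, which does not disturb its minimality. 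With the direction corrected, the rest of your argument is the intended proof.
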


\section{\texorpdfstring{$F$-Polynomials of DT on Acyclic Quivers}{}}

\begin{defn} A quiver is said to be \emph{acyclic} if there are no oriented cycles in the quiver. A vertex $i$ in an acyclic quiver $Q$ is said to be a \emph{source} if $\epsilon_{ij}\geq 0$ for all $j\in I$.
\end{defn}


Let $Q$ be an acyclic quiver. We define a simply-labeled poset $(P(i), L(i))$ for each vertex $i$ of $Q$ as follows. The elements in $P(i)$ are homotopic families of oriented paths in $Q$ that end at $i$. We are using homotopy here because we would like to count the composition of a path $\gamma$ from $j$ to $i$ with the idle path at $i$ the same as $\gamma$ itself. The partial order on $P(i)$ is given by inclusion as subpaths up to homotopy. In particular, the idle path at $i$ is considered a path as well, and it is the unique minimal element in $P(i)$. We call $P(i)$ the \emph{ascendant tree} of the vertex $i$. We label the ascendant tree $P(i)$ by labeling each path $\gamma$ from $j$ to $i$ by $X_j$. 


\begin{thm}\label{thm: DT for acyclic quivers} For each vertex $i$ in an acyclic quiver $Q$, the $F$-polynomial of the cluster DT transformation on $Q$ for the vertex $i$ is equal to the ideal function of the simply-labeled pointed poset $(P(i),L(i))$, i.e., 
\[
F_i=F_{(P(i),L(i))}.
\]
\end{thm}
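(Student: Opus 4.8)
The plan is to induct on the number of vertices of the acyclic quiver $Q$, using the triangular-extension machinery (Proposition \ref{prop: F-polynomial for triangular extensions} / Corollary \ref{cor: extension}) to peel off a source vertex. Since $Q$ is acyclic, it has at least one source $s$; write $Q_1 = Q|_{\{s\}}$ (a single vertex with no arrows) and $Q_2 = Q|_{I\setminus\{s\}}$. Because $s$ is a source, all arrows between $s$ and the rest of $Q$ point out of $s$, so $Q$ is exactly the triangular extension of $Q_1$ by $Q_2$, with $\delta$ the row recording the numbers of arrows $s\to j$. Both pieces are acyclic, hence admit reddening sequences (by \cite{BDP}), so Proposition \ref{prop: F-polynomial for triangular extensions} applies. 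The base case is the one-vertex quiver, where $\DT(A_s) = A_s^{-1}(1 + X_s)$, i.e. $F_s = 1 + X_s$, which matches the ideal function of the two-element chain $P(s) = \{\text{idle path}, \text{idle path}\}$ — wait, more precisely $P(s)$ for an isolated vertex is the single point (the idle path) labeled… no: re-examining, the ascendant tree of $s$ consists of all oriented paths ending at $s$; for an isolated vertex that is just the idle path, a one-element poset labeled $X_s$, whose ideal function is $1 + X_s$. Good.

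For the inductive step, consider a vertex $i$ of $Q$. If $i = s$, then since $s$ is a source with no incoming arrows, $\DT$ restricted behaves as in the base case and $F_s = 1 + X_s$; and indeed $P(s)$ is again a single point labeled $X_s$ (no nontrivial path ends at a source), so the claim holds. If $i \neq s$, then $i$ is a vertex of $Q_2$, and by induction the $F$-polynomial of $\DT_{Q_2}$ at $i$ is $F_{(P_2(i), L_2(i))}$, where $P_2(i)$ is the ascendant tree computed inside $Q_2$. By Proposition \ref{prop: F-polynomial for triangular extensions}, $F_i^{Q}$ is obtained from $F_{(P_2(i),L_2(i))}$ by the substitution $X_j \rightsquigarrow X_j F_s^{\delta_{sj}}$ for each $j \in I\setminus\{s\}$, and $F_s = 1 + X_s$. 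Now I invoke Corollary \ref{cor: extension}: this substitution, applied to an ideal function of a simply-labeled pointed poset, corresponds combinatorially to attaching $\delta_{sj}$ copies of the poset $(P(s), L(s))$ — a single vertex labeled $X_s$ — below each Hasse-diagram vertex labeled $X_j$, i.e. inserting, under every occurrence of $X_j$ in $P_2(i)$, exactly $\delta_{sj}$ new minimal-type elements labeled $X_s$ covered by that occurrence. The final step is to check that the resulting labeled poset is precisely the ascendant tree $P(i)$ computed in $Q$ with its path-labeling: an oriented path $\gamma$ in $Q$ ending at $i$ either avoids $s$ (these are exactly the paths in $Q_2$, giving $P_2(i)$) or starts with one of the $\delta_{sj}$ arrows $s\to j$ and then continues along a path in $Q_2$ from $j$ to $i$; under the subpath partial order, such a path covers (from below) the corresponding path starting at $j$, which is exactly the "attach $\delta_{sj}$ copies of $X_s$ below each $X_j$" operation. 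Matching the partial orders and labelings on both sides finishes the induction.

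The main obstacle I anticipate is the bookkeeping in this last combinatorial identification — specifically, making sure the subpath partial order on $P(i)$ in $Q$ restricts correctly to $P_2(i)$ on the "avoids $s$" part and that the newly attached $X_s$-elements sit in the right order-theoretic position (covered by, and only by, the appropriate $X_j$'s, with the right multiplicity $\delta_{sj}$ and no spurious relations among the different copies). One must be slightly careful that a path ending at $i$ may pass through $s$ only at its very start (since $s$ is a source, no path can re-enter or pass through $s$ in the middle), which is what guarantees the clean "prepend an arrow out of $s$" description; this is where acyclicity and the source condition are really used. A secondary point to verify is that the homotopy convention (idle path absorbed) is consistent with treating the single-vertex $P(s)$ as one point rather than two, and that pointedness is preserved at each stage so that Corollary \ref{cor: extension} genuinely applies. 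Once these order-theoretic checks are in place, the theorem follows.
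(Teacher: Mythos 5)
Your proposal follows the paper's proof exactly: induct on the number of vertices, peel off a source $s$, realize $Q$ as a triangular extension of the singleton $\{s\}$ by $Q\setminus\{s\}$, and apply Proposition \ref{prop: F-polynomial for triangular extensions} to reduce to the substitution $X_j\rightsquigarrow X_j(1+X_s)^{\delta_{sj}}$, which is then interpreted as an attachment operation on Hasse diagrams. The one slip is directional: in the paper's convention (Hasse-diagram arrows point downward, minimal element at the bottom) a path through $s$ is \emph{greater} than the path from $j$ it extends — you in fact say this correctly with ``such a path covers the corresponding path starting at $j$'' — so the new $X_s$-vertex must be placed \emph{above} $X_j$, with a new Hasse arrow pointing from it down to $X_j$, exactly as Corollary \ref{cor: extension} states and as the paper's proof says (``attaching branches on top''); your phrases ``below each Hasse-diagram vertex labeled $X_j$'', ``under every occurrence'', and ``minimal-type elements $\ldots$ covered by that occurrence'' invert this, and if taken literally describe a different poset whose ideal function is wrong (attaching $X_s$ below a single vertex labeled $X_j$ gives $1+X_s+X_sX_j$, not the required $1+X_j+X_jX_s$). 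With that orientation corrected, the bijection you describe between paths ending at $i$ in $Q$ and pairs of a $Q_2$-path with an optional initial arrow out of $s$ is precisely the paper's concluding step.
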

\begin{proof} We will do an induction on the number of vertices of $Q$. If there is only one vertex, then the cluster DT transformation is given by a single mutation at that vertex, which gives an $F$-polynomial $F=1+X=[X]$. Now inductively, let $i$ be a source of $Q$. The full subquiver $Q'$ with the vertex $i$ removed is still acyclic, and the whole quiver can be viewed as a triangular extension of a singleton quiver $\{i\}$ by $Q'$. Now by Proposition \ref{prop: F-polynomial for triangular extensions}, the $F$-polynomial for the source $i$ is just $F=1+X_i=[X_i]$, whereas the $F$-polynomial for any other vertex $j$ can be obtained by taking $F_{(P'(j),L'(j))}$ (which is the $F$-polynomial for $\DT_{Q'}$ by the inductive hypothesis) and then substituting any $X_k$ in it by $X_k(1+X_i)^{\epsilon_{ik}}$. But this is precisely attaching $\epsilon_{ik}$ many branches on top of every vertex labeled by $X_k$ in the Hasse diagram, and we see that the resulting substituted polynomial coincides with the ideal function $F_{(P(j),L(j))}$ as claimed.
\end{proof}

\begin{exmp} Consider the following acyclic quiver. The $F$-polynomials of $\DT$ on this quiver are given on as ideal functions of labeled posets on the right.
\[
\begin{tikzpicture}[baseline=10]
    \node (1) at (0,1) [] {$1$};
    \node (2) at (-1,0) [] {$2$};
    \node (3) at (1,0) [] {$3$};
    \draw [->] (2) -- (1);
    \draw [->] (1) -- (3);
    \draw [->] (2) -- (3);
    \draw [->] (2) to [bend right] (3);
\end{tikzpicture} \hspace{2cm}
F_1=\left[\begin{tikzpicture}[baseline=10]
   \node (1) at (0,0) [] {$X_1$};
   \node (2) at (0,1) [] {$X_2$};
   \draw [->] (2) -- (1);
\end{tikzpicture}\right], \hspace{1cm}
F_2=[X_2], \hspace{1cm} F_3=\left[\begin{tikzpicture}[baseline=25]
   \node (1) at (0,0) [] {$X_3$};
   \node (2) at (0,1) [] {$X_1$};
   \node (3) at (0,2) [] {$X_2$};
   \node (4) at (-1,1) [] {$X_2$};
   \node (5) at (1,1) [] {$X_2$};
   \draw [->] (2) -- (1);
   \draw [->] (3) -- (2);
   \draw [->] (4) -- (1);
   \draw [->] (5) -- (1);
\end{tikzpicture}\right]
\]
\end{exmp}

\section{\texorpdfstring{$F$-Polynomials of DT on Quivers of Surface Type}{}}

Let $S$ be an oriented surface with a set $M$ of marked points. We call the marked points in the interior of $S$ \emph{punctures} and call the marked points along $\partial S$ \emph{boundary marked points}. In particular, if $\partial S \neq \emptyset$, we require that there is at least one marked point on each connected component of $\partial S$ Fomin, Shapiro, and Thurston \cite{FST} introduce the concept of ``tagged triangulations'', generalizing ideal triangulations of surfaces. 

\begin{defn} On a surface with marked points $(S,M)$, an \emph{arc} is a homotopy equivalence family of unoriented curves $\gamma$ on $S$, connecting one point in $M$ to another point (possibly the same point) in $M$, such that $\gamma$ is not null-homotopic, and there is no self-intersection.
\end{defn}

\begin{defn} There are two possible decorations we can put on either end of an arc: \emph{plain} or \emph{notched}. A \emph{tagged arc} is an arc with (possibly different) decorations at its two ends, such that 
\begin{itemize}
    \item the arc does not bound a once-punctured monogon;
    \item if an arc ends at a boundary marked point, then that end must be plain;
    \item if the arc is a loop, then both ends are tagged the same way.
\end{itemize}
We denote a notched end of an arc by $\begin{tikzpicture}\draw[decoration={markings, mark=at position 0.9 with {\arrow{Rays}}}, postaction={decorate}] (0,0) -- (1,0);  
\end{tikzpicture}$, and denote a plain end of an arc by not decorating it. 
\end{defn}

\begin{defn} Two distinct tagged arcs $\alpha$ and $\beta$ are said to be \emph{compatible} if the followings are satisfied.
\begin{itemize}
    \item The untagged versions of $\alpha$ and $\beta$ are either homotopic or do not intersect in their interior (up to homotopy).
    \item If the untagged versions of $\alpha$ and $\beta$ are not homotopic and they share an end point at $p$, then they have be tagged in the same way near $p$.
    \item If the untagged versions of $\alpha$ and $\beta$ are homotopic, then one of their ends are decorated in the same way and the other end is decorated differently.
\end{itemize}
A maximal set of pairwise compatible tagged arcs is called a \emph{tagged triangulation}.
\end{defn}

Note that it follows from the definition that notched ends can only be present near a puncture, and if both decorations are present near a puncture, then there can only be two tagged arcs with homotopic supports and opposite decorations at that puncture.

There is a canonical way to turn a triangulation $T$ of $(S,M)$ into a tagged triangulation. If there are no self-folded triangles present in $T$, then $T$ is automatically a tagged triangulation with all arcs decorated plainly at both ends. For any self-folded triangle, we replace the arc that bounds a punctured monogon by a tagged arc with a notch at the puncture (see Figure \ref{fig: tagged triangulation convention}).
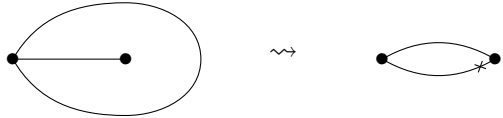
\begin{figure}[H]
    \centering
    \begin{tikzpicture}[baseline=0]
        \draw (0,0) -- (-1.5,0);
        \draw (-1.5,0) to [out=60,in=180] (0,0.75) to [out = 0,in=90] (1,0) to [out=-90,in=0] (0,-0.75) to [out=180,in=-60] (-1.5,0);
        \node at (-1.5,0) [] {$\bullet$};
        \node at (0,0) [] {$\bullet$};
    \end{tikzpicture} \quad \quad $\rightsquigarrow$ \quad \quad 
    \begin{tikzpicture}[baseline=0]
        \draw[decoration={markings, mark=at position 0.9 with {\arrow{Rays}}}, postaction={decorate}] (-1.5,0) to [bend right] (0,0);
        \draw (-1.5,0) to [bend left] (0,0);
        \node at (-1.5,0) [] {$\bullet$};
        \node at (0,0) [] {$\bullet$};
    \end{tikzpicture}
    \caption{Replacing a self-folded triangle by a tagged arc with a notch at the puncture.}
    \label{fig: tagged triangulation convention}
\end{figure}

The introduction of tagged arcs is to allow mutation of a triangulation when 
self-folded triangles are present. In particular, when there is a puncture with both decorations present, then this punctured must be inside a bigon; the mutation at each of the two incident tagged arcs will produce the following two tagged triangulations.
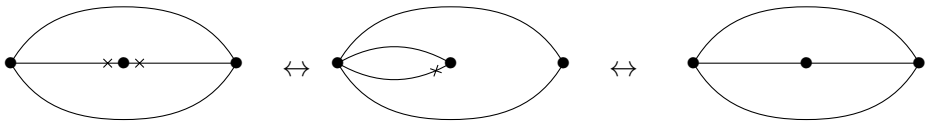
\begin{figure}[H]
    \centering
    \begin{tikzpicture}[baseline=-5]
        \draw[decoration={markings, mark=at position 0.9 with {\arrow{Rays}}}, postaction={decorate}](-1.5,0) -- (0,0);
        \draw[decoration={markings, mark=at position 0.9 with {\arrow{Rays}}}, postaction={decorate}] (1.5,0) -- (0,0);
        \draw (-1.5,0) to [out=60,in=180] (0,0.75) to [out=0,in=120] (1.5,0);
        \draw (-1.5,0) to [out=-60,in=180] (0,-0.75) to [out=0,in=-120] (1.5,0);
        \foreach \i in {-1,0,1}
        {
        \node at (\i*1.5,0) [] {$\bullet$};
        }
    \end{tikzpicture}\quad $\leftrightarrow$
    \begin{tikzpicture}[baseline=-5]
        \draw[decoration={markings, mark=at position 0.9 with {\arrow{Rays}}}, postaction={decorate}] (-1.5,0) to [bend right] (0,0);
        \draw (-1.5,0) to [bend left] (0,0);
        \draw (-1.5,0) to [out=60,in=180] (0,0.75) to [out=0,in=120] (1.5,0);
        \draw (-1.5,0) to [out=-60,in=180] (0,-0.75) to [out=0,in=-120] (1.5,0);
        \foreach \i in {-1,0,1}
        {
        \node at (\i*1.5,0) [] {$\bullet$};
        }
    \end{tikzpicture}\quad $\leftrightarrow$ \quad
    \begin{tikzpicture}[baseline=-5]
        \draw(-1.5,0) -- (1.5,0);
        \draw (-1.5,0) to [out=60,in=180] (0,0.75) to [out=0,in=120] (1.5,0);
        \draw (-1.5,0) to [out=-60,in=180] (0,-0.75) to [out=0,in=-120] (1.5,0);
        \foreach \i in {-1,0,1}
        {
        \node at (\i*1.5,0) [] {$\bullet$};
        }
    \end{tikzpicture}
    \caption{From the middle tagged triangulation, the left picture is obtained by mutating at the plain arc, and the right picture is obtained by mutating at the notched arc.}
    \label{fig: tagged triangulation mutation}
\end{figure}

From a tagged triangulation $T$ of a surface with marked points $(S,M)$, one can produce a quiver $Q_T$. The vertices of $Q_T$ is in bijection with the non-boundary tagged arcs (tagged arcs whose support is not homotopic to a curve on $\partial S$) in $T$. To draw the arrows in $Q_T$, we first observe that $T$ decomposes $S$ into two types of regions: triangles and bigons. Inside each triangule region we draw a counterclockwisely oriented $3$-cycle among the corresponding quiver vertices; inside each bigon, we draw the following pattern on the right among the four quiver vertices involved. If some of the arcs present are boundary arcs, we simply delete that vertex with all arrows incident to it. In the end, we erase a maximal subset of oriented $2$-cycles. Quivers coming from a tagged triangulation are said to be of \emph{surface type}. 
\begin{figure}[H]
    \centering
    \begin{tikzpicture}[baseline=0]
        \draw (-150:1.5) -- (-30:1.5) -- (0,1.5) -- cycle;
        \foreach \i in {1,2,3}
        {
        \node [red] (\i) at (150+\i*120:0.75) [] {$\bullet$}; 
        }
        \draw [->,red] (1) -- (2);
        \draw [->,red] (2) -- (3);
        \draw [->,red] (3) -- (1);
    \end{tikzpicture} \hspace{3cm}
    \begin{tikzpicture}[baseline=0,scale=1.2]
        \draw[decoration={markings, mark=at position 0.9 with {\arrow{Rays}}}, postaction={decorate}] (-1.5,0) to [bend right] (0,0);
        \draw (-1.5,0) to [bend left] (0,0);
        \draw (-1.5,0) to [out=60,in=180] (0,0.75) to [out=0,in=120] (1.5,0);
        \draw (-1.5,0) to [out=-60,in=180] (0,-0.75) to [out=0,in=-120] (1.5,0);
        \node [red] (0) at (-0.75,0.25) [] {$\bullet$};
        \node [red] (1) at (-0.75,-0.25) [] {$\bullet$};
        \node [red] (2) at (0.4,0.7) [] {$\bullet$};
        \node [red] (3) at (0.4,-0.7) [] {$\bullet$};
        \draw [->,red] (2) -- (0);
        \draw [->,red] (2) -- (1);
        \draw [->,red] (0) -- (3);
        \draw [->,red] (1) -- (3);
        \draw [->, red] (3) -- (2);
    \end{tikzpicture}
    \caption{Building blocks for quivers of surface type.}
    \label{fig: quiver of surface type building blocks}
\end{figure}
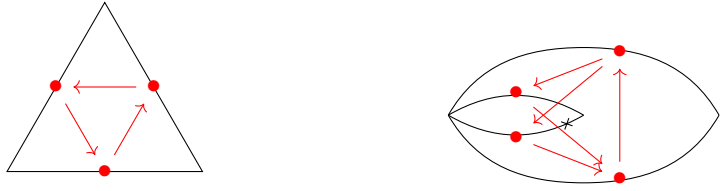

The following result of Goncharov and Shen states that most quivers of surface types admit a cluster DT transformation.

\begin{thm}[{\cite[Theorem 1.3]{GS2}}]\label{thm: DT for surfaces} Suppose $(S,M)$ satisfies the following conditions:
\begin{itemize}
    \item $g(S)+|M|\geq 3$ or $S$ is an annulus with two boundary marked points (one on each boundary);
    \item if $\partial S=\emptyset$, then $|M|>1$.
\end{itemize}
Then any quiver $Q_T$ associated with a tagged triangulation $T$ of $(S,M)$ admits a cluster DT transformation. In particular, any mutation sequence that achieves the following is a reddening sequence:
\begin{itemize}
    \item changing the decoration at each punctured end of a tagged arc to the opposite decoration;
    \item rotating each boundary component of $\partial S$ by one tick (with respect to the induced orientation on $\partial S$).
\end{itemize}
\end{thm}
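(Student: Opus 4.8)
The plan is to reduce the statement to Keller's combinatorial characterization of the cluster DT transformation \cite{KelDT}: since the cluster DT transformation is unique whenever it exists (as recorded above), it is enough to exhibit a single reddening mutation sequence $\mathbf k$ on $Q_T$, together with a quiver isomorphism $\sigma\colon Q_T\to\mu_{\mathbf k}(Q_T)$ whose $C$-matrix $c$ satisfies $c_{i\sigma(j)}=-\delta_{ij}$; equivalently, $\mathbf k$ is reddening exactly when every vertex of the final quiver is red. Accordingly I would split the argument into three parts: (i) realize the advertised transformation of $T$---flip the tag at every punctured arc-end and rotate each boundary component by one tick---as an honest finite sequence of flips, hence as a mutation sequence $\mathbf k$ of $Q_T$; (ii) show that this transformation induces an isomorphism $\sigma$ from $Q_T$ to $\mu_{\mathbf k}(Q_T)$; (iii) compute the $C$-matrix of $\mathbf k$ and verify $c_{i\sigma(j)}=-\delta_{ij}$.

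For (i): rotating a triangulated boundary polygon by one tick is classical---repeatedly flip the fan of arcs incident to a fixed boundary marked point, in cyclic order, which drags the triangulation over by one step---and iterating this over all boundary components moves each of them by one tick; changing all tags at a puncture $p$ is the composition of the elementary ``re-tagging at $p$'' operations of Fomin--Shapiro--Thurston \cite{FST}, each of which is a flip. The only delicate configuration is a puncture lying inside a bigon carrying two tagged arcs with the same support and opposite tags, where one must use the two flips displayed in Figure \ref{fig: tagged triangulation mutation} in the correct order. Composing all of this realizes $T\rightsquigarrow T'$ by finitely many flips, which we take to be $\mathbf k$; since the quiver of a flipped triangulation is the mutation of the quiver \cite{FST}, this gives $\mu_{\mathbf k}(Q_T)=Q_{T'}$. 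Part (ii) is then essentially formal: the map ``rotate the support by one tick, swap plain and notched at every punctured end'' is a bijection of the non-boundary tagged arcs that preserves the compatibility relation, and it preserves the incidence data (triangles and bigons, up to the symmetric swap of the two arcs in each tagged bigon) that defines the arrows of $Q_T$, so it is an isomorphism onto $Q_{T'}$.

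The heart of the matter is (iii), which I expect to be the main obstacle. I would use the surface interpretation of $C$-vectors: for a triangulation $T'$ reached from the initial $T$ by flips, the $C$-vector relative to $T$ of an arc $\tau\in T'$ is the vector of shear coordinates, measured in $T$, of the elementary lamination $\ell_\tau$ attached to $\tau$---the curve obtained by pushing $\tau$ slightly to one side and, at each of its punctured ends, spiraling it into the puncture against the orientation of $S$. The claim to prove is that for every arc $\tau_i$ of $T'$ the lamination $\ell_{\tau_i}$ meets the arcs of $T$ only in a consistently signed manner, so that all its shear coordinates are non-positive, and that the resulting matrix of shear coordinates is precisely $c_{i\sigma(j)}=-\delta_{ij}$. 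Intuitively this holds because ``rotate by one tick and open the notches'' drags each arc of $T$ just past exactly one arc of $T$ in the clockwise sense and past none of the others, producing a single $-1$ and zeros elsewhere; making this rigorous amounts to a local case analysis of a flip near a triangle, near a tagged bigon, and near a puncture carrying a self-folded triangle, followed by an appeal to the sign coherence theorem (Theorem \ref{thm: sign coherence}) to pass from ``the visible crossings are non-positive'' to ``the whole $C$-vector is non-positive.'' The hypotheses $g(S)+|M|\ge 3$ (with the annulus exception) and $|M|>1$ when $\partial S=\emptyset$ enter here: they exclude the degenerate small surfaces---such as the once-punctured torus---whose quivers admit no reddening sequence at all, and they guarantee that the elementary laminations above are well defined and non-trivial. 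Once $\mathbf k$ is shown to be reddening, the existence and uniqueness statements recorded earlier produce the cluster DT transformation of $Q_T$, and the sequence built in (i) is the advertised one.
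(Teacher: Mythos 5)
This theorem is cited from \cite[Theorem 1.3]{GS2} and the paper supplies no proof of its own, so there is nothing paper-internal to compare your argument against; I can only compare it to Goncharov--Shen's cited proof and assess it on its own terms.

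Your route is a genuinely different one from \cite{GS2}. Goncharov--Shen work geometrically: they construct an explicit automorphism of the moduli space of framed $PGL_2$-local systems (for surfaces, the ``change of framing at punctures plus rotation of marked points'') and verify it satisfies the characterizing properties of the cluster DT transformation by analyzing its action on canonical functions, rather than by producing an explicit mutation sequence and checking its $C$-matrix by hand. What your approach would buy, if completed, is a purely combinatorial proof at the level of quivers and laminations; what it costs is that part (iii), which you correctly identify as the heart, is carried quite a bit further in \cite{GS2} than in your sketch.

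The concrete gaps are in (iii). First, the statement that the $C$-vector of $\tau_i \in T'$ relative to $T$ equals the vector of shear coordinates in $T$ of the elementary lamination $\ell_{\tau_i}$ is itself a nontrivial theorem (essentially Fomin--Thurston together with tropical duality, Theorem \ref{thm: tropical cluster duality}), and the sign/spiraling conventions for tagged arcs are exactly where the bookkeeping is delicate; you cite none of this, and the theorem cannot go through without it. Second, your ``local case analysis plus sign coherence'' step is not an argument as written: sign coherence (Theorem \ref{thm: sign coherence}) only tells you each $C$-vector is uniformly signed, so you would still need to exhibit at least one strictly negative entry in every $C$-vector to conclude the sequence is reddening, and to get the precise form $c_{i\sigma(j)}=-\delta_{ij}$ you in any case need the full shear-coordinate computation, not just a sign determination. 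Third, in (i) you tacitly invoke the Fomin--Shapiro--Thurston connectivity of tagged triangulations under flips to realize the re-tagging and rotation as a mutation sequence; this is exactly where the surface hypotheses ($g(S)+|M|\ge 3$ etc.) enter and should be cited explicitly. None of these are wrong ideas, but as stated they are placeholders where the actual work lives, so the proposal is an outline of a plausible alternative proof rather than a proof.
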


For the rest of this section, we would like to focus on a family of arcs that we call ``admissible'', and try to express their $F$-polynomials under the cluster DT transformation using ideal functions of labeled posets.

\begin{defn} Let $T$ be a triangulation of $(S,M)$ (possibly with self-folded triangles). A puncture is said to be \emph{admissible} (with respect to the triangulation $T$) if there are no arcs connecting $p$ back to itself and there are at least three arcs incident to $p$. A boundary marked point $s$ is said to be \emph{admissible} (with respect to the triangulation $T$) if there are no arcs connecting $s$ back to itself and $s$ lies on a boundary with at least two boundary marked points. An arc in $T$ is said to be \emph{admissible} (with respect to the triangulation $T$) if it is 
\begin{enumerate}
    \item the unique arc connecting two non-adjacent admissible boundary marked points;
    \item the unique arc connecting an admissible boundary marked point and an admissible puncture;
    \item the unique arc connecting wo admissible punctures.
\end{enumerate}
\end{defn}

For each of the three types of admissible arcs, we define a simply-labeled pointed poset $(P(a),L(a))$ for them using a Hasse diagram as follows.

\begin{enumerate}
    \item Suppose $a$ is the unique arc connecting two non-adjacent admissible boundary marked points $s$ and $t$. Let $b_1,b_2,\dots, b_m$ be the arcs that end at $s$ after $a$ (in the order with respect to the induced orientation on $\partial S$), and let $c_1,c_2,\dots, c_n$ be the arcs that end at $t$ after $a$ (in the order with respect to the induced orientation on $\partial S$). Then the poset $P(a)$ consists of two linearly ordered chains $b_m>b_{m-1}>\cdots >b_1>a$ and $c_n>c_{n-1}>\cdots >c_1>a$, both ending at the same element $a$. We label elements in $P(a)$ by $L(a):i\mapsto X_i$. A picture of the Hasse diagram is shown on the left in Figure \ref{fig: Hasse diagrams for surface quivers}. 

    \item Suppose $a$ is the unique arc connecting an admissible boundary marked point $s$ and an admissible puncture $p$. Let $b_1,b_2,\dots,b_m$ be the arcs that end at $s$ after $a$ (in the order with respect to the induced orientation on $\partial S$), and let $c_1,c_2,\dots, c_n,c_{n+1}=a$ be the arcs that end at $p$ (ordered in the counterclockwise direction). By assumption, we have $n\geq 2$. If $m\geq 1$, then the labeled poset $(P(a),L(a))$ is defined by the Hasse diagram shown in the middle of Figure \ref{fig: Hasse diagrams for surface quivers}. If $m=0$, then the labeled poset $P(a)$ is a linearly ordered chain $c_{n-1}>c_{n-2}>\cdots >c_1>a$, and the labeling is given by $L(a):i\mapsto X_i$.

    \item Suppose $a$ is the unique arc connecting two admissible punctures $p$ and $q$. Let $b_1, b_2, \dots, b_m, b_{m+1}=a$ be the arcs that end at $p$ (ordered in the counterclockwise direction) and let $c_1,c_2,\dots, c_n,c_{n+1}=a$ be the arcs that end at $q$ (ordered in the counterclockwise direction). By assumption, we have $m,n\geq 2$. Then the labeled poset $(P(a),L(a))$ is defined by the Hasse diagram shown on the right in Figure \ref{fig: Hasse diagrams for surface quivers}. 
\end{enumerate}
\begin{figure}[H]
    \centering
    \begin{tikzpicture}
    \node (0) at (0,0.5) [] {$X_a$};
    \node (1) at (-1,1) [] {$X_{b_1}$};
    \node (2) at (-1,2) [] {$\vdots$};
    \node (3) at (-1,3) [] {$X_{b_m}$};
    \node (4) at (1,1) [] {$X_{c_1}$};
    \node (5) at (1,2) [] {$\vdots$};
    \node (6) at (1,3) [] {$X_{c_n}$};
    \draw [->] (3) -- (2);
    \draw [->] (2) -- (1);
    \draw [->] (1) -- (0);
    \draw [->] (6) -- (5);
    \draw [->] (5) -- (4);
    \draw [->] (4) -- (0);
    \end{tikzpicture} \hspace{2cm}
        \begin{tikzpicture}
    \node (0) at (0,0.5) [] {$X_a$};
    \node (1) at (-1,1) [] {$X_{b_1}$};
    \node (2) at (-1,2) [] {$\vdots$};
    \node (3) at (-1,3) [] {$X_{b_m}$};
    \node (4) at (1,1) [] {$X_{c_1}$};
    \node (5) at (1,2) [] {$\vdots$};
    \node (6) at (1,3) [] {$X_{c_n}$};
    \draw [->] (3) -- (2);
    \draw [->] (2) -- (1);
    \draw [->] (1) -- (0);
    \draw [->] (6) -- (5);
    \draw [->] (5) -- (4);
    \draw [->] (4) -- (0);
    \draw [->] (6) -- (1);
    \end{tikzpicture}\hspace{2cm}
     \begin{tikzpicture}
    \node (0) at (0,0.5) [] {$X_a$};
    \node (1) at (-1,1) [] {$X_{b_1}$};
    \node (2) at (-1,2) [] {$\vdots$};
    \node (3) at (-1,3) [] {$X_{b_m}$};
    \node (4) at (1,1) [] {$X_{c_1}$};
    \node (5) at (1,2) [] {$\vdots$};
    \node (6) at (1,3) [] {$X_{c_n}$};
    \node (7) at (0,3.5) [] {$X_a$};
    \draw [->] (3) -- (2);
    \draw [->] (2) -- (1);
    \draw [->] (1) -- (0);
    \draw [->] (6) -- (5);
    \draw [->] (5) -- (4);
    \draw [->] (4) -- (0);
    \draw [->] (6) -- (1);
    \draw [->] (3) -- (4);
    \draw [->] (7) -- (6);
    \draw [->] (7) -- (3);
    \end{tikzpicture}
    \caption{Hasse diagrams for the three types of arcs. Left: unique arc connecting two admissible boundary marked points. Middle: unique arc connecting an admissible boundary marked point and an admissible puncture. Right: unique arc connecting two admissible punctures.}
    \label{fig: Hasse diagrams for surface quivers}
\end{figure}
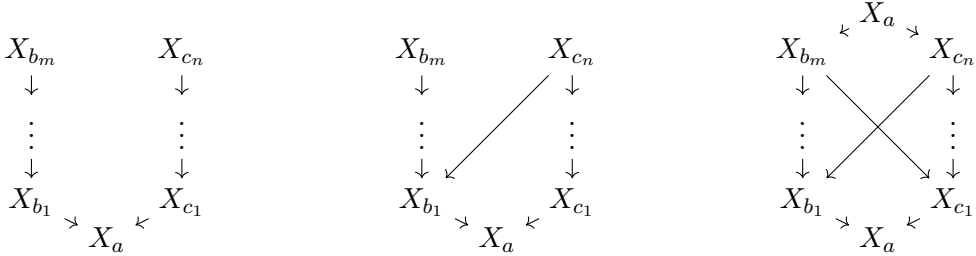

The main goal of this section is to prove the following theorem.

\begin{thm}\label{thm: F poly for DT on surface type quivers} Let $T$ be a triangulation of $(S,M)$. Let $a$ be an admissible arc in $T$. Then the $F$-polynomial of the cluster DT transformation on $Q_T$ for the vertex $a$ is equal to the ideal function of the simply-labeled pointed poset $(P(a),L(a))$, i.e., 
\[
F_a=F_{(P(a),L(a))}
\]
\end{thm}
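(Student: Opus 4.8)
The plan is to prove Theorem~\ref{thm: F poly for DT on surface type quivers} by computing the wall-crossing of the cluster variable $A_a$ along the explicit reddening sequence provided by Theorem~\ref{thm: DT for surfaces}, using the $F$-polynomial mutation formula \eqref{eq: F mutation} and tracking how the intermediate cluster Poisson variables evolve. The key structural observation is that the reddening sequence described in Theorem~\ref{thm: DT for surfaces} only changes the triangulation in a neighborhood of the two endpoints of $a$ (flipping tags at admissible punctures, rotating boundary components by one tick), and these two local modifications commute and affect disjoint collections of arcs; so it suffices to understand the effect of each endpoint-modification separately and then combine them. I would organize the argument around a sequence of elementary flips: starting from $T$, each successive flip in the reddening sequence replaces one arc $b_j$ (or $c_j$) adjacent to an endpoint with the next arc in the cyclic order around that endpoint, and by \eqref{eq: F mutation} the $F$-polynomial of the newly created arc is obtained from the old one by the binomial exchange relation involving the neighbors.

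First I would set up the combinatorial bookkeeping: fix an admissible arc $a$ with endpoints (two boundary marked points, or a boundary marked point and a puncture, or two punctures), enumerate the arcs $b_1,\dots,b_m$ and $c_1,\dots,c_n$ incident to the two endpoints in the prescribed cyclic order, and describe precisely which flip sequence realizes ``rotate by one tick'' / ``flip the tag'' — the point being that $a$ itself is flipped exactly once, at the very end, and the arc it becomes is again a copy of $a$ (at the rotated/retagged position), which is why a second copy of $X_a$ appears at the top of the Hasse diagram in the two-puncture case (and why no such copy appears in the boundary–boundary case, where after rotating both endpoints the arc occupies a genuinely new slot of the triangulation whose variable coincides with $A_a$ of the original). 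Second, I would compute the $F$-polynomials of the arcs $b_j, c_j$ as they are successively flipped: by an induction on $j$, the $F$-polynomial of the $j$-th flipped arc equals the ideal function of the sub-poset consisting of the chain below it, and the relevant instance of \eqref{eq: F mutation} is exactly Lemma~\ref{lem: formula for pointed labeled posets} read backwards — the exchange relation $F'' = F'^{-1}(\text{monomial} + \text{monomial})$ becomes, after clearing denominators and using sign coherence to identify which $C$-vector monomial appears, the recursion $F_{(P,L)} = 1 + L_{i_{\min}} F_{(P',L')}$. Third, I would handle the interaction of the two endpoints: when a puncture is admissible and the arc $a$ runs between two punctures, the crucial cross-relation that produces the edge $b_m \to c_1$ (i.e., $X_{b_m}$ dominating $X_{c_1}$ in the Hasse diagram of Figure~\ref{fig: Hasse diagrams for surface quivers}) comes from the quiver arrow between the last-flipped arc at one puncture and the first arc at the other; I would verify this by inspecting the local quiver pattern around a puncture (the ``fan'' of triangles) and tracking the $\epsilon$-entries through the flips.

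An alternative — and possibly cleaner — route is inductive in the spirit of the acyclic case: realize the surface-type quiver $Q_T$, near the admissible arc, as (a piece of) a triangular extension or a full subquiver of a larger, better-understood quiver, and then invoke Corollary~\ref{cor: truncation} or Corollary~\ref{cor: extension} to reduce to a base case. Concretely, the ``fan'' of arcs around an admissible puncture looks like a type-$A$ (linear) acyclic piece glued on, so Proposition~\ref{prop: F-polynomial for triangular extensions} should let me build the chain-shaped posets of Figure~\ref{fig: Hasse diagrams for surface quivers} by attaching linear chains one vertex at a time, exactly as in the proof of Theorem~\ref{thm: DT for acyclic quivers}; the two-puncture case would then be assembled by attaching the two fans and accounting for the single extra $X_a$ at the top. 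I expect to have to treat the three types of admissible arcs as three (closely parallel) cases, plus the degenerate $m=0$ sub-case in type~(2).

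\textbf{Main obstacle.} The hard part will be controlling the \emph{cross-term} between the two endpoints — showing that flipping all the arcs at one endpoint, then all at the other, produces precisely the single extra covering relation $X_{b_m} > X_{c_1}$ (and, in the two-puncture case, the extra top element $X_a$ with $X_a > X_{b_m}$ and $X_a > X_{c_n}$) and no further relations. This requires keeping careful track of which arrows are created and destroyed in $Q_T$ as the flips proceed near each endpoint, and in particular checking that the two ``fans'' do not interact beyond this one arrow; the bookkeeping is delicate because flips near a puncture with a self-folded triangle change the quiver in ways that are easy to get wrong, and because one must simultaneously track the $C$-matrix (via sign coherence, Theorem~\ref{thm: sign coherence}) to know which monomial survives in each instance of \eqref{eq: F mutation}. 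A secondary technical point is justifying that the two endpoint-modifications genuinely commute at the level of the full reddening sequence, so that the $F$-polynomial computation factors as claimed; here I would lean on the uniqueness of $\DT$ and hence of the $F$-polynomials $F_a$ (independence of the reddening sequence, noted after \eqref{eq: DT F polynomial}) to choose whichever ordering of the flips is most convenient.
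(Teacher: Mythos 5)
Your proposal takes a genuinely different route from the paper, and I think it is worth spelling out what the paper does because it largely side\-steps the ``main obstacle'' you correctly identify.

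The paper does \emph{not} track the $F$-polynomial of $a$ through the entire global reddening sequence. Instead, for the two-puncture case with $m=n=3$ (the base case), it exhibits a short explicit mutation sequence $\bk=(a,b_1,b_2,c_1,c_2,a)$ after which the local picture is fully notched; the rest of a reddening sequence for $Q_T$ never mutates at $a$ again, so by \eqref{eq: F mutation} the polynomial $F_a$ is already determined by $\bk$. The crucial next step is to \emph{localize}: one extracts the $5$-vertex full subquiver $Q'$ spanned by $\{a,b_1,b_2,c_1,c_2\}$, observes that $\bk'=(a,b_1,b_2,c_1,c_2,a,b_1,c_1)$ is a reddening sequence on $Q'$, and thus reduces the computation to $\DT_{Q'}$; since $\mu_a(Q')$ is acyclic, Proposition~\ref{prop: DT for adjacent clustsers} together with Theorem~\ref{thm: DT for acyclic quivers} finishes the base case in a few lines. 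For $m+n>6$ the paper does not re-run the mutation sequence: it performs a single flip at one $b_j$, invokes Proposition~\ref{prop: DT for adjacent clustsers} once more, and then uses the insertion Lemma~\ref{lem: insertion} (read in reverse) to show that the change of variables produced by one flip is exactly ``insert one node into a chain of the Hasse diagram,'' giving an induction on $m+n$. The two boundary cases are then deduced from the two-puncture case, not treated in parallel: one glues a triangulated \emph{collar} along the relevant boundary components, making $Q_T$ a full subquiver of a bigger surface quiver for which $a$ now joins two punctures, and then applies Proposition~\ref{prop: DT for subquivers} and Lemma~\ref{lem: setting to 0} to truncate the poset. Compared to your plan, this buys a lot: the localization to $Q'$ plus the insertion lemma replaces the entire delicate bookkeeping of which arrows between the two fans get created and destroyed (your ``cross-term'' problem), and the collar reduction replaces the three parallel cases you anticipated with a single master case and a purely poset-level truncation.

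Two concrete issues to flag in your proposal. First, ``the reddening sequence only changes the triangulation in a neighborhood of the endpoints of $a$'' is not accurate -- the reddening sequence of Theorem~\ref{thm: DT for surfaces} acts at \emph{every} puncture and boundary component; what is true (and what the paper exploits) is that $F_a$ is insensitive to mutations away from $a$, which is why one needs to engineer a reddening sequence whose $a$-mutations all occur in a short initial segment. Second, your assertion that ``$a$ is flipped exactly once, at the very end'' is also off: in the paper's base case $a$ is mutated twice, and it is precisely the second mutation at $a$, combined with Proposition~\ref{prop: DT for adjacent clustsers}, that produces the second copy of $X_a$ at the top of the two-puncture Hasse diagram. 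Your alternative idea of realizing the local picture via full subquivers and truncation is in the right spirit -- that is essentially how the boundary cases go -- but without the collar extension you have no concrete larger quiver to truncate from, and without the insertion lemma the induction to general $m,n$ is missing a mechanism.
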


\begin{proof} We will prove the last case (arcs connecting two punctures) first, and then deduce the first two cases from the last case using Proposition \ref{prop: DT for subquivers}.

Suppose $a$ is an arc connecting two admissible punctures $p$ and $q$. Let us first assume that each of $p$ and $q$ is incident to exactly three arcs (the minimum requirement in Theorem \ref{thm: F poly for DT on surface type quivers}). Then the local picture of the triangulation $T$ near the arc $a$ looks like the left picture in Figure \ref{fig: local picture near an arc connecting two punctures}. If we mutate along the sequence $\bk=(a,b_1,b_2,c_1,c_2,a)$, the local picture changes to the one on the right in Figure \ref{fig: local picture near an arc connecting two punctures}. 
\begin{figure}[H]
    \centering
    \begin{tikzpicture}[scale=1.5,baseline=0]
        \draw (-1,0) -- node [above] {$a$} (1,0) node [right] {$q$};
        \draw (-1,0) node [left] {$p$} -- node [right] {$b_1$} (0,1) -- node [left] {$c_2$} (1,0) -- node [left] {$c_1$} (0,-1) -- node[right] {$b_2$} cycle;
        \draw [dashed] (0,0) ellipse (1.5 and 1);
    \end{tikzpicture}\hspace{3cm}
    \begin{tikzpicture}[scale=1.5,baseline=0]
        \draw [decoration={markings, mark=at position 0.9 with {\arrow{Rays}}, mark=at position 0.17 with {\arrow{Rays}}}, postaction={decorate}] (-1,0) -- node [above] {$a$} (1,0) node [right] {$q$};
        \draw [decoration={markings, mark=at position 0.17 with {\arrow{Rays}}}, postaction={decorate}] (-1,0) node [left] {$p$} -- node [right] {$b_2$} (0,1);
        \draw [decoration={markings, mark=at position 0.9 with {\arrow{Rays}}}, postaction={decorate}] (0,1) -- node [left] {$c_1$} (1,0);
        \draw [decoration={markings, mark=at position 0.17 with {\arrow{Rays}}}, postaction={decorate}] (1,0) -- node [left] {$c_2$} (0,-1);
        \draw [decoration={markings, mark=at position 0.9 with {\arrow{Rays}}}, postaction={decorate}] (0,-1) -- node[right] {$b_1$} (-1,0);
        \draw [dashed] (0,0) ellipse (1.5 and 1);
    \end{tikzpicture}
    \caption{Local picture of the triangulation $T$ before and after the mutation sequence $\mu_\bk$.}
    \label{fig: local picture near an arc connecting two punctures}
\end{figure}
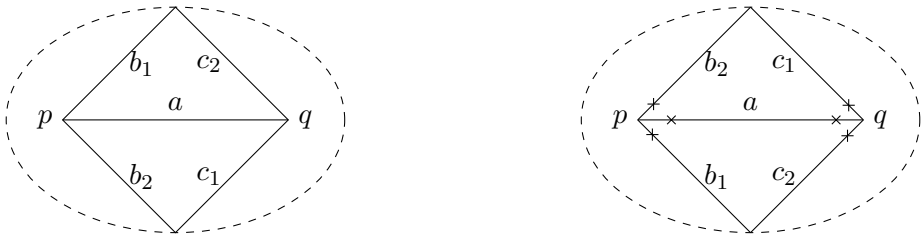

Note that in the right picture in Figure \ref{fig: local picture near an arc connecting two punctures}, all the arcs incident to the punctures $p$ and $q$ are notched near these two punctures, and the arc $a$ is already notched at both ends. By Theorem \ref{thm: DT for surfaces}, the task of changing decorations is done at the two punctures $p$ and $q$, and hence no more mutation is needed at the arc $a$. In other words, there is a reddening sequence for $Q_T$ that starts with the mutation sequence $\bk$, and nowhere in the remaining reddening sequence does the quiver vertex $a$ show up. Now we recall from Equation \ref{eq: F mutation} that $F$-polynomials only change when a mutation occurs at the quiver vertex; thus, the $F$-polynomial $F_a$ of the cluster DT transformation on $Q_T$ for the vertex $a$ is the same as the $F$-polynomial for $a$ after the mutation sequence $\bk$. 

Since $\bk$ only involves five quiver vertices, we can extract this local full subquiver $Q'$ (left picture in Figure \ref{fig: local subquiver for two punctures}) and consider the mutation sequence $\bk$ acting on it. Note that extending the mutation sequence $\bk$ to $\bk'=(a,b_1,b_2,c_1,c_2,a,b_1,c_1)$ does not change the $F$-polynomial $F_a$. But $\bk'$ is actually a reddening sequence on $Q'$, and hence $F_a$ is also the $F$-polynomial of $\DT_{Q'}$ for the vertex $a$. Now to compute the $F$-polynomials of $\DT_{Q'}$, we can make use of Proposition \ref{prop: DT for adjacent clustsers} and Theorem \ref{thm: DT for acyclic quivers}.
\begin{figure}[H]
    \centering
\begin{tikzpicture}[baseline=0]
    \node (0) at (0,0) [] {$a$};
    \node (1) at (1,1) [] {$c_2$};
    \node (2) at (1,-1) [] {$c_1$};
    \node (3) at (-1,-1) [] {$b_2$};
    \node (4) at (-1,1) [] {$b_1$};
    \draw [<-] (0) -- (2);
    \draw [<-] (2) -- (1);
    \draw [<-] (1) -- (0);
    \draw [<-] (0) -- (4);
    \draw [<-] (4) -- (3);
    \draw [<-] (3) -- (0);
    \draw [<-] (4) -- (1);
    \draw [<-] (2) -- (3);
    \end{tikzpicture} \hspace{3cm}
    \begin{tikzpicture}[baseline=0]
    \node (0) at (0,0) [] {$a$};
    \node (1) at (1,1) [] {$c_2$};
    \node (2) at (1,-1) [] {$c_1$};
    \node (3) at (-1,-1) [] {$b_2$};
    \node (4) at (-1,1) [] {$b_1$};
    \draw [->] (0) -- (2);
    \draw [->] (1) -- (0);
    \draw [->] (0) -- (4);
    \draw [->] (3) -- (0);
    \end{tikzpicture}
    \caption{Local full subquiver $Q'$ (left) and its mutated quiver $\mu_a(Q')$ (right).}
    \label{fig: local subquiver for two punctures}
\end{figure}
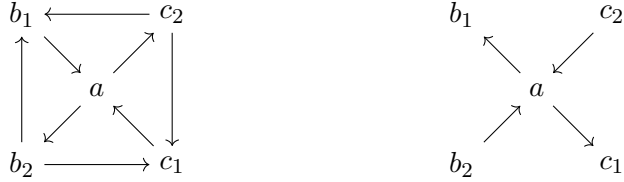

Based on Theorem \ref{thm: DT for acyclic quivers}, we list the $F$-polynomials of $\DT_{\mu_a(Q')}$ as below. Here we use primed notations for the $F$-polynomials and the cluster Poisson variables for the quiver $\mu_a(Q')$ to distinguish from those associated with the full subquiver $Q'$. We also make use of Lemma \ref{lem: formula for pointed labeled posets} to simplify the formulas.
\[
F'_a=1+X'_a(1+X'_{b_2})(1+X'_{c_2}), \quad\quad F'_{b_1}=1+X'_{b_1}F'_a, \quad\quad F'_{b_2}=1+X'_{b_2}, 
\]
\[
F'_{c_1}=1+X'_{c_1}F'_a, \quad\quad F'_{c_2}=1+X'_{c_2}.
\]
According to Equation \ref{eq: F mutation}, 
\begin{align*}
\mu_a(F'_a)=&\frac{(1+X'_{b_1}F'_a)(1+X'_{c_1}F'_a)+X'_a(1+X'_{b_2})(1+X'_{c_2})}{F'_a}\\
=&\frac{F'_a+(X'_{b_1}+X'_{c_1})F'_a+X'_{b_1}X'_{c_1}F'^2_a}{F'_a}\\
=&1+X'_{b_1}+X'_{c_1}+X'_{b_1}X'_{c_1}F'_a.
\end{align*}
Next, according to Proposition \ref{prop: DT for adjacent clustsers}, we need to do the following change of variables:
\[
X'_a=X_a^{-1}, \quad \quad X'_{b_1}=\frac{X_aX_{b_1}}{1+X_a}, \quad \quad X'_{b_2}=X_{b_2}(1+X_a), \quad \quad X'_{c_1}=\frac{X_aX_{c_1}}{1+X_a}, \quad \quad X'_{c_2}=X_{c_2}(1+X_a),
\]
and then multiply the outcome by $(1+X_a)$. One can check that the result is equal to the ideal function of the labeled poset on the left of Figure \ref{fig: labeled poset for an arc connecting two punctures}.
\begin{figure}[H]
\centering
\begin{tikzpicture}[baseline=20]
    \node (0)  at (0,0) [] {$X_a$};
    \node (1) at (-1,1) [] {$X_{b_1}$};
     \node (2) at (-1,2) [] {$X_{b_2}$};
      \node (3) at (1,1) [] {$X_{c_1}$};
       \node (4) at (1,2) [] {$X_{c_2}$};
        \node (5) at (0,3) [] {$X_a$};
        \draw [->] (1) -- (0);
        \draw [->] (2) -- (1);
        \draw [->] (3) -- (0);
        \draw [->] (4) -- (3);
        \draw [->] (5) -- (4);
        \draw [->] (5) -- (2);
        \draw [->] (4) -- (1);
        \draw [->] (2) -- (3);
\end{tikzpicture}
\hspace{3cm}
\begin{tikzpicture}[baseline=40,scale=0.9]
    \node (0)  at (0,0.25) [] {$X'_a$};
    \node (1) at (-1,1) [] {$X'_{b_1}$};
     \node (2) at (-1,2) [] {$X'_{b_3}$};
     \node (3) at (-1,4) [] {$X'_{b_m}$};
     \node (8) at (-1,3) [] {$\vdots$};
     \node (9) at (1,3) [] {$\vdots$};
      \node (4) at (1,1) [] {$X'_{c_1}$};
       \node (5) at (1,2) [] {$X'_{c_2}$};
       \node (6) at (1,4) [] {$X'_{c_n}$};
        \node (7) at (0,4.75) [] {$X'_a$};
        \draw [->] (1) -- (0);
        \draw [->] (2) -- (1);
        \draw [->] (4) -- (0);
        \draw [->] (5) -- (4);
        \draw [->] (3) -- (4);
        \draw [->] (6) -- (1);
        \draw [->] (7) -- (6);
        \draw [->] (7) -- (3);
        \draw [->] (8) -- (2);
        \draw [->] (3) -- (8);
        \draw [->] (9) -- (5);
        \draw [->] (6) -- (9);
\end{tikzpicture}
\caption{Labeled posets for admissible arcs connecting two punctures: base case (left) and the inductive hypothesis (right).}
\label{fig: labeled poset for an arc connecting two punctures}
\end{figure}
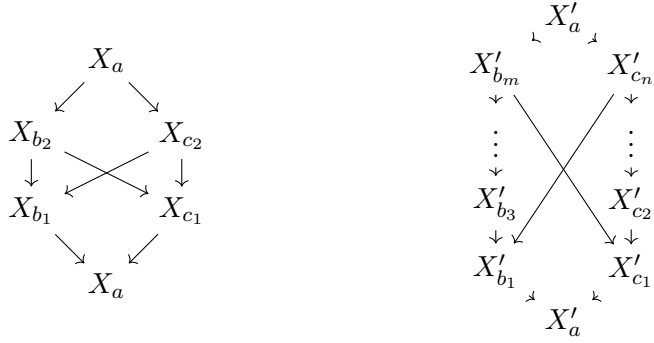

For the general case of arcs connecting two punctures, we will do an induction on the sum $m+n$ of the numbers of arcs incident to the admissible punctures $p$ and $q$. The base case with $m+n=3+3=6$ has been taken care of above. Suppose $a$ is an arc connecting the two punctures $p$ and $q$, with $m+n>6$. Since $m+n>6$ and $m,n\geq 3$, we can assume without loss of generality that $m>3$. Now take the arc $b_2$ and flip (mutate) it. The new arc $b'_2$ connects the other endpoints of $b_1$ and $b_3$, which are not $q$ because $a$ is assumed to be the only arc shared by $p$ and $q$. This decreases the total number $m+n$ by $1$. By the induction hypothesis, the $F$-polynomial $F'_a$ is given as the ideal function of the labeled poset on the right of Figure \ref{fig: labeled poset for an arc connecting two punctures}. To get back the desired $F$-polynomial $F_a$, we just need to apply Proposition \ref{prop: DT for adjacent clustsers} to the mutation at $b'_2$. The only cluster Poisson variables affected are 
\begin{equation}\label{eq2}
X'_{b_1}=X_{b_1}(1+X_{b_2}^{-1})^{-1} \quad \quad \text{and} \quad \quad X'_{b_3}=X_{b_3}(1+X_{b_2}),
\end{equation}
while all other cluster Poisson variables in the labeled poset are simply substituted as $X'_i=X_i$. Now we apply Lemma \ref{lem: insertion} in the reverse direction to conclude that the change of variables in \eqref{eq2} is the same as replacing $X'_{b_1}$ by $X_{b_1}$, replacing $X'_{b_3}$ by $X_{b_3}$, and then inserting $X_{b_2}$ in between $X_{b_1}$ and $X_{b_3}$. This concludes the induction.

\bigskip

For the remaining two types of admissible arcs, we can reduce them to the case of an arc connecting two admissible punctures by adding a ``collar'' along the boundary. Suppose $C$ is a connected component of $\partial S$ with at least two marked points. The \emph{collar extension} of the surface $(S,M)$ along $C$ glues a copy of $C\times [0,1]$ to $S$ by identifying $C$ with $C\times \{0\}$, and places the same number of boundary marked points along $C\times \{1\}$. We can triangulate the collar strip as shown in Figure \ref{fig: collar strip}, which extends any triangulation $T$ of $(S,M)$ to a triangulation of the collar extension.
\begin{figure}[H]
    \begin{tikzpicture}
        \draw (0,0) -- (7,0);
        \draw (0,1) -- (7,1);
        \foreach \i in {1,2,3,5,6,7}
        {
        \draw (\i-0.5,0) -- (\i-0.5,1);
        \draw (\i-0.5,1) -- (\i,0.5);
        \draw (\i-0.5,0) -- (\i-1,0.5);
        }
        \node at (3.5,0.5) [] {$\cdots$};
        \draw [dashed] (0,-0.25) -- (0,1.25);
        \draw [dashed] (7,-0.25) -- (7,1.25);
    \end{tikzpicture}
    \caption{Triangulation of the collar strip}
    \label{fig: collar strip}
\end{figure}
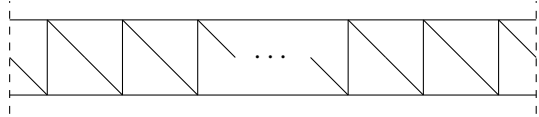

We observe that if $s$ is an admissible boundary marked point with respect to a triangulation $T$ of $(S,M)$, then it becomes an admissible puncture on the collar extension with the induced triangulation $T'$. If $a$ is the unique arc connecting an admissible boundary marked point $s$ to an admissible puncture $p$, we add a collar extension to the boundary component containing $s$. If $a$ is the unique arc connecting two non-adjacent admissible boundary marked points, we add a collar extension to each of the boundary components containing these two boundary marked points (they may also lie on the same boundary component, in which case we only add one collar extension). Moreover, the quiver $Q_T$ for the original triangulation is now a full subquiver of the quiver $Q_{T'}$: the extra vertices in $Q_{T'}$ all come from the non-boundary arcs inside the collar strip. By Theorem \ref{thm: DT for surfaces}, if $Q_T$ has a cluster DT transformation, so does $Q_{T'}$.

Since $Q_T$ is a full subquiver of $Q'_T$, we can make use of Proposition \ref{prop: DT for subquivers} and Lemma \ref{lem: setting to 0} to write down the $F$-polynomial $F_a$ of the cluster DT transformation on $Q_T$ at $a$. By treating $a$ as a unique arc connecting two admissible punctures in the collar extension, we obtain an $F$-polynomial $F'_a$ that is equal to the ideal function of a labeled poset similar to the right picture in Figure \ref{fig: Hasse diagrams for surface quivers}. To reduce to $F_a$, we need to set some of the cluster Poisson variables to $0$. In particular, the boundary arcs on $\partial S$ right after the admissible boundary marked points are among the vanishing cluster Poisson variables, and they generate an anti-ideal that contains all the labelings that will be set to $0$. Thus, we can use Lemma \ref{lem: setting to 0} to slice off the anti-ideal portion of the Hasse diagram and obtain the $F$-polynomial $F_a$, as shown in the first two pictures in Figure \ref{fig: Hasse diagrams for surface quivers}.
\begin{figure}[H]
    \centering
    \begin{tikzpicture}
        \path [fill=lightgray] (-2,0) arc (-180:0:2 and 0.5) -- (2,1) arc (0:-180:2 and 0.5) --cycle;
        \draw (-2,0) arc (-180:0:2 and 0.5);
        \draw (-2,1) arc (-180:0:2 and 0.5);
        \draw (0,-0.5) -- (0,-1.5) node [below] {$a$};
        \draw (0,-0.5) -- (0.5,-1.25) node [below right] {$b_1$};
        \draw (0,-0.5) -- (1,-1.1) node [right] {$b_2$};
        \node at (1,-0.7) [] {$\vdots$};
        \draw (0,-0.5) -- (-0.5,-1.25);
        \draw (0,-0.5) -- (-1,-1.1);
        \node at (-1,-0.7) [] {$\vdots$};
        \draw [red] (0,-0.5) to [out=45, in=-120] (1,0.57);
        \draw [red] (0,-0.5) arc (-90:-120:2 and 0.5) node [black] {$\bullet$};
        \draw [red] (0,-0.5) arc (-90:-60:2 and 0.5) node [black] {$\bullet$};
        \draw [red] (0,-0.5) to (0,0.5);
        \node at (1,0.57) [] {$\bullet$};
        \node at (0,0.5) [] {$\bullet$};
        \node at (0,-0.5) [] {$\bullet$};
    \end{tikzpicture}
    \caption{We color the cluster Poisson variables on $Q_{T'}$ that need to be set to zero in red.}
    \label{fig:my_label}
\end{figure}
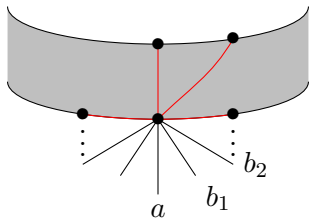

Lastly, if there are no arcs in $T$ that are incident to $s$ after $a$ (with respect to the orientation on $\partial S$), then $b_1$ itself needs to be set to $0$. If the other end of $a$ is an admissible puncture, then the right picture in Figure \ref{fig: Hasse diagrams for surface quivers} collapses down to a linearly ordered chain of the form $X_a\leftarrow X_{c_1}\leftarrow \cdots \leftarrow X_{c_{n-1}}$. \end{proof}

\begin{rmk} When self-folded triangles are present (which means that there is an arc going from a marked point back to itself, making the marked point not admissible), one may find $F$-polynomials of the cluster DT transformation that cannot possibly be ideal functions of a simply-labeled pointed poset. For example, consider the twice punctured disk with a tagged triangulation shown in Figure \ref{fig: twice punctured disk}. The quiver is given on the right of the same figure. One can check that the mutation sequence $(5,1,3,4,2,5)$ is a maximal green sequence, and by going through that sequence, one finds that the $F$-polynomial of DT for the vertex $2$ is
\begin{align*}
F_2=&1 + X_2+ X_1X_2+ X_2X_3 + X_2X_4+ X_1X_2X_3+ X_1X_2X_4 + X_2X_3X_4+ X_1X_2X_3X_4\\
 &   + X_1X_2X_3X_5  + X_1X_2X_4X_5  + X_2X_3X_4X_5  + 2X_1X_2X_3X_4X_5+X_1X_2X_3X_4X_5^2.
\end{align*}
However, there is no simply-labeled pointed poset that has $F_2$ as its ideal function.
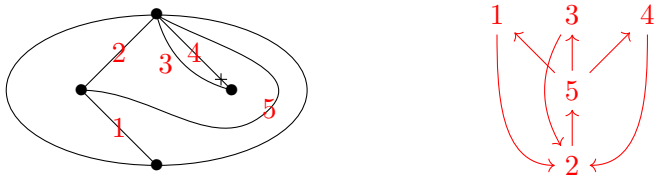
\begin{figure}[H]
    \centering
    \begin{tikzpicture}[baseline=0]
        \draw (0,0) ellipse (2 and 1);
        \draw (0,-1) -- node [red] {$1$} (-1,0) -- node [red] {$2$} (0,1);
        \draw [decoration={markings, mark=at position 0.9 with {\arrow{Rays}}}, postaction={decorate}] (0,1) -- node [red] {$4$} (1,0);
        \draw (0,1) to [bend right] node [red,left] {$3$} (1,0);
        \draw (-1,0) to [out=0,in=-135] (1.5,-0.25)  node [red] {$5$} to [out=45,in=-30] (0,1);
        \foreach \i in {-1,1}
        {
        \node at (\i,0) [] {$\bullet$};
        \node at (0,\i) [] {$\bullet$};
        }
    \end{tikzpicture} \hspace{2cm}
    \begin{tikzpicture}[baseline=0]
    \node [red] (5) at (0,0) [] {$5$};
    \node [red] (1) at (-1,1) [] {$1$};
    \node [red] (2) at (0,-1) [] {$2$};
    \node [red] (3) at (0,1) [] {$3$};
    \node [red] (4) at (1,1) [] {$4$};
    \draw [->, red] (2) -- (5);
    \foreach \i in {1,4}
    {
    \draw [->, red] (5)  -- (\i);
    \draw [->, red] (\i) to [out=-90, in=180*\i] (2);
    }
    \draw [->, red] (5) -- (3);
    \draw [->, red] (3) to [bend right] (2);
    \end{tikzpicture}
    \caption{Example of an $F$-polynomial of DT that is not an ideal function.}
    \label{fig: twice punctured disk}
\end{figure}
\end{rmk}

\section{\texorpdfstring{$F$-Polynomial of DT on Quivers associated with a Triple of Flags}{}}

In \cite{FGteich}, Fock and Goncharov consider the configuration space of a triple of flags in $\bC^n$ and construct a quiver $Q_n$ for the cluster structure on this configuration space. This quiver contains quivers for many interesting families of cluster varieties as subquivers (e.g., double Bruhat cells, Grassmannians). Thus, by Proposition \ref{prop: DT for subquivers}, knowing the $F$-polynomials of DT on $Q_n$ can inform us about the $F$-polynomials of DT on many more families of cluster varieties. In this section, we will consider the quiver $Q_n$ and show that its cluster DT $F$-polynoimals are also given by ideal functions of labeled posets.

Let us first describe the quiver $Q_n$. Consider the simplex 
\[
S=\{(x,y,z) \mid x,y,z\geq 0, x+y+z=n-1\}
\]
and consider the set of lattice points on $S$. These lattice points lie on lines that are parallel to coordinate hyperplanes. These lines in turn triangulate $S$ into smaller triangles. Among these smaller triangles, some are pointing in the same direction as $S$, and some are not. For those that are pointing in the same direction as $S$, we draw a little bipartite tripod of the form $\begin{tikzpicture}[baseline=-2,scale=0.5]
\foreach \i in {0,1,2}
{
\draw (0,0) -- (90+\i*120:0.5);
\draw [fill=white] (90+\i*120:0.5) circle [radius=0.1];
\draw [fill=black] (0,0) circle [radius=0.1];
}
\end{tikzpicture}$ inside. This produces a bipartite graph $\bW_n$ on $S$. This refined triangulation of $S$ is called an $n$-triangulation in \cite{FGteich,GS2} and the bipartite graph $\bW_n$ is called an \emph{$\SL_n$-ideal web} in \cite{Gon}. See Figure \ref{fig: bipartite graph} for an example when $n=5$.
\begin{figure}[H]
    \centering
    \begin{tikzpicture}
        \draw (-150:2) -- (-30:2) -- (90:2) -- cycle;
        \draw (-0.866*1.5,-0.25) -- (0.866*1.5,-0.25) -- (0.866,-1) -- (-0.866*0.5,1.25)--(0.866*0.5,1.25) -- (-0.866,-1) -- cycle;
        \draw (-0.866,0.5) -- (0.866,0.5) -- (0,-1) -- cycle;
    \end{tikzpicture}
    \hspace{2cm}
    \begin{tikzpicture}
        \draw [lightgray](-150:2) -- (-30:2) -- (90:2) -- cycle;
        \draw[lightgray] (-0.866*1.5,-0.25) -- (0.866*1.5,-0.25) -- (0.866,-1) -- (-0.866*0.5,1.25)--(0.866*0.5,1.25) -- (-0.866,-1) -- cycle;
        \draw[lightgray] (-0.866,0.5) -- (0.866,0.5) -- (0,-1) -- cycle;
        \draw (0,2) -- (0,1.5);
        \draw (0,1.5) -- (-0.866*0.5,1.25);
        \draw (0,1.5) -- (0.866*0.5,1.25);
        \draw [fill=white] (0,2) circle [radius=0.1];
        \draw [fill=black] (0,1.5) circle [radius=0.1];
        \foreach \i in {0,1}
        {
        \draw (-0.866*0.5+0.866*\i,1.25) -- (-0.866*0.5+0.866*\i,0.75);
        \draw (-0.866*0.5+0.866*\i,0.75) -- (-0.866+0.866*\i,0.5);
        \draw (-0.866*0.5+0.866*\i,0.75) -- (0.866*\i,0.5);
        \draw [fill=white] (-0.866*0.5+0.866*\i,1.25) circle [radius=0.1];
        \draw [fill=black] (-0.866*0.5+0.866*\i,0.75) circle [radius=0.1];
        }
        \foreach \i in {0,1,2}
        {
        \draw (-0.866+0.866*\i,0.5) -- (-0.866+0.866*\i,0);
        \draw (-0.866+0.866*\i,0) -- (-0.866*1.5+0.866*\i,-0.25);
        \draw (-0.866+0.866*\i,0) -- (-0.866*0.5+0.866*\i,-0.25);
        \draw [fill=white] (-0.866+0.866*\i,0.5) circle [radius=0.1];
        \draw [fill=black] (-0.866+0.866*\i,0) circle [radius=0.1];
        }
        \foreach \i in {0,...,3}
        {
        \draw (-0.866*1.5+0.866*\i,-0.25) -- (-0.866*1.5+0.866*\i,-0.75);
        \draw (-0.866*1.5+0.866*\i,-0.75) -- (-0.866*2+0.866*\i,-1);
        \draw (-0.866*1.5+0.866*\i,-0.75) -- (-0.866+0.866*\i,-1);
        \draw [fill=white] (-0.866*1.5+0.866*\i,-0.25) circle [radius=0.1];
        \draw [fill=black] (-0.866*1.5+0.866*\i,-0.75) circle [radius=0.1];
        }
        \foreach \i in {0,...,4}
        {
        \draw [fill=white] (-2*0.866+0.866*\i,-1) circle [radius=0.1];
        }
    \end{tikzpicture} \hspace{2cm}
    \begin{tikzpicture}
        \draw [lightgray](-150:2) -- (-30:2) -- (90:2) -- cycle;
        \draw [lightgray] (0,2) -- (0,1.5);
        \draw [lightgray] (0,1.5) -- (-0.866*0.5,1.25);
        \draw [lightgray] (0,1.5) -- (0.866*0.5,1.25);
        \draw [lightgray,fill=white] (0,2) circle [radius=0.1];
        \draw [lightgray,fill=lightgray] (0,1.5) circle [radius=0.1];
        \node [red] (00) at (0,1) [] {$\bullet$};
        \foreach \i in {0,1}
        {
        \draw [lightgray] (-0.866*0.5+0.866*\i,1.25) -- (-0.866*0.5+0.866*\i,0.75);
        \draw [lightgray] (-0.866*0.5+0.866*\i,0.75) -- (-0.866+0.866*\i,0.5);
        \draw [lightgray] (-0.866*0.5+0.866*\i,0.75) -- (0.866*\i,0.5);
        \draw [lightgray, fill=white] (-0.866*0.5+0.866*\i,1.25) circle [radius=0.1];
        \draw [lightgray, fill=lightgray] (-0.866*0.5+0.866*\i,0.75) circle [radius=0.1];
        \node [red] (1\i) at (-0.866*0.5+0.866*\i,0.25) [] {$\bullet$};
        }
        \foreach \i in {0,1,2}
        {
        \draw [lightgray] (-0.866+0.866*\i,0.5) -- (-0.866+0.866*\i,0);
        \draw [lightgray] (-0.866+0.866*\i,0) -- (-0.866*1.5+0.866*\i,-0.25);
        \draw [lightgray] (-0.866+0.866*\i,0) -- (-0.866*0.5+0.866*\i,-0.25);
        \draw [lightgray, fill=white] (-0.866+0.866*\i,0.5) circle [radius=0.1];
        \draw [lightgray, fill=lightgray] (-0.866+0.866*\i,0) circle [radius=0.1];
        \node [red] (2\i) at (-0.866+0.866*\i,-0.5) [] {$\bullet$};
        }
        \foreach \i in {0,...,3}
        {
        \draw [lightgray] (-0.866*1.5+0.866*\i,-0.25) -- (-0.866*1.5+0.866*\i,-0.75);
        \draw [lightgray] (-0.866*1.5+0.866*\i,-0.75) -- (-0.866*2+0.866*\i,-1);
        \draw [lightgray] (-0.866*1.5+0.866*\i,-0.75) -- (-0.866+0.866*\i,-1);
        \draw [lightgray, fill=white] (-0.866*1.5+0.866*\i,-0.25) circle [radius=0.1];
        \draw [lightgray, fill=lightgray] (-0.866*1.5+0.866*\i,-0.75) circle [radius=0.1];
        }
        \foreach \i in {0,...,4}
        {
        \draw [lightgray, fill=white] (-2*0.866+0.866*\i,-1) circle [radius=0.1];
        }
        \draw [red,->] (11) -- (10);
        \draw [red,->] (22) -- (21);
        \draw [red,->] (21) -- (20);
        \draw [red,->] (10) -- (00);
        \draw [red,->] (20) -- (10);
        \draw [red,->] (21) -- (11);
        \draw [red,->] (00) -- (11);
        \draw [red,->] (11) -- (22);
        \draw [red,->] (10) -- (21);
    \end{tikzpicture}
    \caption{The $5$-triangulation of a simplex, the corresponding $\SL_5$-ideal web, and the associated quiver $Q_5$.}
    \label{fig: bipartite graph}
\end{figure}
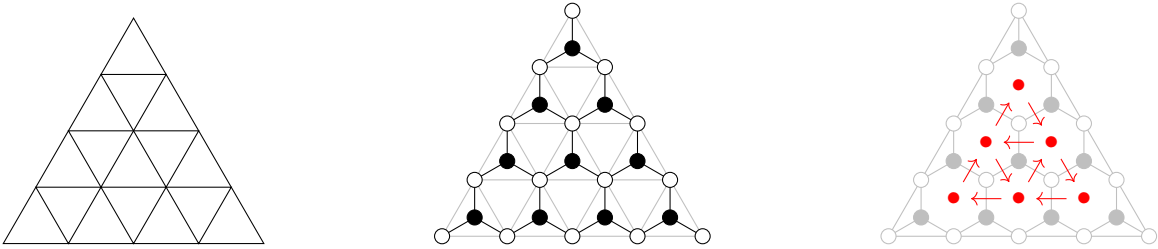
To obtain the quiver $Q_n$, we put a quiver vertex inside each bounded face of the $\SL_n$-ideal web, and draw a clockwise $3$-cycle around each white vertex that is not on $\partial S$. Note that the vertices of the quiver $Q_n$ are in bijection with triples of non-negative integers $(a,b,c)$ such that $a+b+c=n-3$. See Figure \ref{fig: bipartite graph} for an example when $n=5$.

\begin{thm}[{\cite[Theorem 1.3]{GS2}}]\label{thm: DT for triple of flags} $Q_n$ quivers all admit cluster DT transformations. 
\end{thm}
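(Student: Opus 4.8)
Since this is \cite[Theorem 1.3]{GS2}, I outline the argument one would run. The plan is to realize $Q_n$ as the exchange quiver of a geometric moduli space and then invoke Keller's characterization \cite{KelDT} of the cluster DT transformation by a reddening mutation sequence. By the construction of Fock and Goncharov \cite{FGteich}, the $n$-triangulation of the simplex $S$ — equivalently the $\SL_n$-ideal web $\bW_n$ — equips the configuration space $\Conf_3(\cA)$ of triples of decorated flags for $\SL_n$ with a cluster $\cA$-structure whose exchange quiver is precisely $Q_n$. So it suffices to exhibit a cluster automorphism of the cluster ensemble attached to $Q_n$ whose $C$-matrix is $-\id$, i.e. one realized by a reddening sequence.

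First I would write down the candidate transformation geometrically. Fixing a volume form on $\bC^n$ identifies $\bC^n$ with its dual and induces an involution $*$ on decorated flags (the ``orthogonal'' decorated flag), which up to a twist is the Chevalley involution of $\SL_n$. The candidate DT transformation is
\[
(\mathsf{A}_1,\mathsf{A}_2,\mathsf{A}_3)\longmapsto (\mathsf{A}_3^{*},\mathsf{A}_2^{*},\mathsf{A}_1^{*}),
\]
and one checks it is a positive, hence cluster, automorphism of $\Conf_3(\cA)$ acting on the initial web coordinates in the way prescribed for a DT transformation. This last verification is a direct computation with the combinatorics of $\bW_n$: the web has a manifest $\bZ/3$ rotational symmetry matching the cyclic relabeling, and the reflection part is visible because $*$ reverses the flag filtrations.

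The remaining and main step is to produce an actual reddening mutation sequence realizing this transformation. The cleanest route is amalgamation in the sense of Fock--Goncharov: $\Conf_3(\cA)$ for $\SL_n$ is amalgamated from copies of $\Conf_3(\cA)$ for $\SL_2$, one for each small triangle of the $n$-triangulation, glued along the lattice segments; DT transformations are compatible with amalgamation, and for $\SL_2$ the space $\Conf_3(\cA)$ has a reddening sequence since its quiver is of finite (acyclic) type, handled in \cite{BDP}. Splicing these local reddening sequences in the order dictated by the layers of the triangle gives a reddening sequence for $Q_n$. The main obstacle is the bookkeeping in this splicing — one must track the $C$-vectors through the entire sequence and confirm they all become negative — and the delicate part is the behavior near $\partial S$, where the web is not locally the standard tripod and the gluing could a priori spoil sign coherence; here one leans on the sign-coherence theorem (Theorem \ref{thm: sign coherence}) together with a careful choice of the order of amalgamation to control the interface arrows. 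An alternative, more hands-on approach would be an induction on $n$ peeling off one edge of the triangle via the triangular-extension machinery (Theorem \ref{thm: reddening for triangular extension}); but the arrows crossing such an edge in $Q_n$ are not all co-oriented, so $Q_n$ is not literally a triangular extension of $Q_{n-1}$, and some form of the geometric argument seems unavoidable.
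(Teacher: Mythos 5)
The statement you are proving is cited to \cite{GS2}; the present paper does not reprove it, but it does recall (in the passage preceding Lemma \ref{lem: right most vertices}) the actual mechanism behind it: an explicit, iteratively constructed maximal green sequence on $Q_n$. Your proposal diverges from this in a way that contains a real gap.

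The opening framing is fine, and the geometric candidate (compose the Chevalley/duality involution $*$ with the cyclic relabeling of the three flags) is indeed the kind of map Goncharov--Shen use. The problem is in how you propose to certify it by a reddening sequence. You claim that $\Conf_3(\cA)$ for $\SL_n$ is amalgamated from copies of $\Conf_3(\cA)$ for $\SL_2$, one per small triangle of the $n$-triangulation, and that one can splice their reddening sequences. But $\Conf_3(\cA)$ for $\SL_2$ has no mutable vertices at all (its quiver is a bare triple of frozen edge variables), so each local piece has an empty reddening sequence, and splicing empty sequences produces the empty sequence --- certainly not a reddening sequence on $Q_n$ for $n\geq 4$. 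Moreover, ``DT transformations are compatible with amalgamation'' is not an available black box: amalgamation glues along frozen vertices, which DT transformations do not act on, so the compatibility you want would itself require an argument (and in the relevant references the implication runs the other way, from $\Conf_3$ to amalgamated moduli spaces, not into $\Conf_3$). Your instinct that $Q_n$ is not a triangular extension of $Q_{n-1}$ is correct; however, the conclusion that ``some form of the geometric argument seems unavoidable'' is not. What actually works, and what this paper quotes and then uses in the proof of Theorem \ref{thm: F polynomials for DT of triple of flags}, is a concrete combinatorial maximal green sequence: for $n>3$ one first runs the subsequences $\bk_a=((n-a-3,a,0),(n-a-3,a-1,1),\dots,(n-a-3,0,a))$ for $a=0,\dots,n-3$, which mutate only at green vertices and turn the rightmost vertex of each row red, and then recurses on the copy of $Q_{n-1}$ sitting in the lower-left corner. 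Verifying that this sequence mutates only green vertices and eventually makes everything red is a finite, tractable $C$-vector computation (as in \cite[Proposition 4.3]{SWflag}), and it bypasses both the amalgamation issue and the failure of the triangular-extension machinery. Replacing your amalgamation step with this explicit induction would close the gap.
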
 

In \cite{Wengdb}, Weng proves that the cluster DT transformation on $T\backslash \GL_n^{e,w_0}/T$ (which has $Q_n$ as an initial quiver) can be computed geometrically as the composition of the following three morphisms:
\begin{enumerate}
    \item the amalgamation morphism $\spec(\mathcal{X})\rightarrow \GL_n^{e,w_0}$ defined by Fock and Goncharov \cite{FGamalgamation};
    \item the morphism of taking generalized minors $\GL_n^{e,w_0}\rightarrow \spec(\mathcal{A})$ defined by Berenstein, Fomin, and Zelevinsky \cite{BFZ};
    \item the cluster ensemble morphism $\spec(\mathcal{A})\rightarrow \spec(\mathcal{X})$ defined by Fock and Goncharov \cite{FGensemble}, which we have already seen in \eqref{eq: p map}.
\end{enumerate}

By using the ideal web, we can combinatorialize these morphisms above. First, the composition of the amalgamation morphism and the morphism of taking generalized minors can be computed combinatorially using Postnikov's boundary measurement map \cite{Pos}. In detail, we first add an external edge to each white vertex on the left side and on the right side of the ideal web $\bW_n$ (in particular, the top white vertex gets two external edges, one on each side). We label these two sets of external edges by $1,2,\dots, n$ from top to bottom. Inside the extended bipartite graph $\tilde{\bW}_n$, we orient the non-vertical edges from left to right and orient the vertical edges from top to bottom. We hence call the external edges on the left \emph{sources} and the external edges on the right \emph{sinks}. An example of this orientation when $n=5$ is drawn in the left picture of Figure \ref{fig: perfect orientation}. Let $\gamma$ be a path in $\tilde{\bW}_n$ going from a source $i$ to a sink $j$. A bounded face $f$ in $\tilde{\bW}_n$ is said to be \emph{dominated} by $\gamma$ if it lies on the right side of $\gamma$ with respect to its orientation. We denote the set of bounded faces dominated by $\gamma$ by $\hat{\gamma}$. For any two subsets $I,J\subset \{1,2,\dots, n\}$ of the same size, we consider families of $|I|$ many pairwise disjoint paths that go from each element in $I$ to each element in $J$, and collect these families into a set $C(I,J)$; in other words, elements in $C(I,J)$ are of the form $\Gamma=\{\gamma_1,\gamma_2,\dots, \gamma_{|I|}\}$, where each $\gamma_i$ is a path from an element in $I$ to an element in $J$. 

We associate a formal variable $X_f$ with each bounded face $f$ in the ideal web $\bW_n$. Postnikov's boundary measurement map produces a polynomial $M_I^J$ in $X_f$'s for each pair of equal-size subsets $I,J\subset\{1,2,\dots, n\}$ \footnote{In general, Postnikov's boundary measurement map produces a rational function in $X_f$'s, and its actual definition has an extra winding number factor than \eqref{eq: boundary measurement map}; however, since the orientation we put on $\bW_n$ is acyclic, we can ignore this winding number factor and the result is guaranteed to be a polynomial.}:
\begin{equation}\label{eq: boundary measurement map} M_I^J=\sum_{\Gamma\in C(I,J)}\prod_{\gamma\in \Gamma}\prod_{f\in \hat{\gamma}}X_f.
\end{equation}

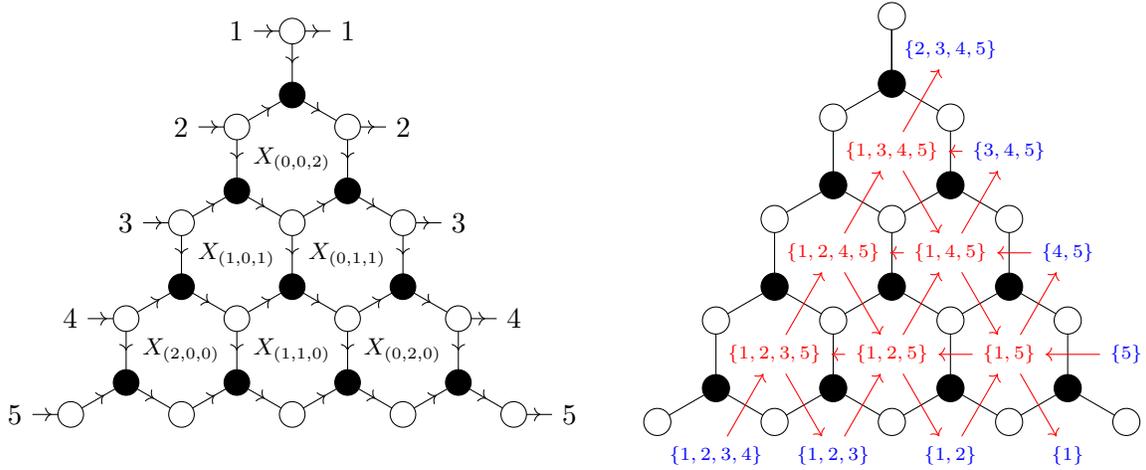
\begin{figure}[H]
    \centering
    \begin{tikzpicture}[scale=1.7,baseline=0]
    \foreach \i in {1,...,5}
    {
    \draw[decoration={markings, mark=at position 0.5 with {\arrow{>}}}, postaction={decorate}] (-0.3-0.866*\i*0.5+0.866*0.5,2.75-0.75*\i) node [left] {$\i$} -- (-0.866*\i*0.5+0.866*0.5,2.75-0.75*\i);
    \draw[decoration={markings, mark=at position 0.7 with {\arrow{>}}}, postaction={decorate}] (0+0.866*\i*0.5-0.866*0.5,2.75-0.75*\i) -- (0.3+0.866*\i*0.5-0.866*0.5,2.75-0.75*\i) node[right] {$\i$};
    }
        \draw[decoration={markings, mark=at position 0.5 with {\arrow{>}}}, postaction={decorate}] (0,2) -- (0,1.5);
        \draw[decoration={markings, mark=at position 0.5 with {\arrow{<}}}, postaction={decorate}] (0,1.5) -- (-0.866*0.5,1.25);
        \draw[decoration={markings, mark=at position 0.5 with {\arrow{>}}}, postaction={decorate}] (0,1.5) -- (0.866*0.5,1.25);
        \draw [fill=white] (0,2) circle [radius=0.1];
        \draw [fill=black] (0,1.5) circle [radius=0.1];
        \foreach \i in {0,1}
        {
        \draw[decoration={markings, mark=at position 0.5 with {\arrow{>}}}, postaction={decorate}](-0.866*0.5+0.866*\i,1.25) -- (-0.866*0.5+0.866*\i,0.75);
        \draw[decoration={markings, mark=at position 0.5 with {\arrow{<}}}, postaction={decorate}] (-0.866*0.5+0.866*\i,0.75) -- (-0.866+0.866*\i,0.5);
        \draw[decoration={markings, mark=at position 0.5 with {\arrow{>}}}, postaction={decorate}] (-0.866*0.5+0.866*\i,0.75) -- (0.866*\i,0.5);
        \draw [fill=white] (-0.866*0.5+0.866*\i,1.25) circle [radius=0.1];
        \draw [fill=black] (-0.866*0.5+0.866*\i,0.75) circle [radius=0.1];
        }
        \foreach \i in {0,1,2}
        {
        \draw[decoration={markings, mark=at position 0.5 with {\arrow{>}}}, postaction={decorate}] (-0.866+0.866*\i,0.5) -- (-0.866+0.866*\i,0);
        \draw[decoration={markings, mark=at position 0.5 with {\arrow{<}}}, postaction={decorate}] (-0.866+0.866*\i,0) -- (-0.866*1.5+0.866*\i,-0.25);
        \draw [decoration={markings, mark=at position 0.5 with {\arrow{>}}}, postaction={decorate}](-0.866+0.866*\i,0) -- (-0.866*0.5+0.866*\i,-0.25);
        \draw [fill=white] (-0.866+0.866*\i,0.5) circle [radius=0.1];
        \draw [fill=black] (-0.866+0.866*\i,0) circle [radius=0.1];
        }
        \foreach \i in {0,...,3}
        {
        \draw [decoration={markings, mark=at position 0.5 with {\arrow{>}}}, postaction={decorate}](-0.866*1.5+0.866*\i,-0.25) -- (-0.866*1.5+0.866*\i,-0.75);
        \draw [decoration={markings, mark=at position 0.5 with {\arrow{<}}}, postaction={decorate}](-0.866*1.5+0.866*\i,-0.75) -- (-0.866*2+0.866*\i,-1);
        \draw [decoration={markings, mark=at position 0.5 with {\arrow{>}}}, postaction={decorate}](-0.866*1.5+0.866*\i,-0.75) -- (-0.866+0.866*\i,-1);
        \draw [fill=white] (-0.866*1.5+0.866*\i,-0.25) circle [radius=0.1];
        \draw [fill=black] (-0.866*1.5+0.866*\i,-0.75) circle [radius=0.1];
        }
        \foreach \i in {0,...,4}
        {
        \draw [fill=white] (-2*0.866+0.866*\i,-1) circle [radius=0.1];
        }
        \node at (0,1) [] {\footnotesize{$X_{(0,0,2)}$}};
        \node at (-0.866*0.5,0.25) [] {\footnotesize{$X_{(1,0,1)}$}};
        \node at (0.866*0.5,0.25) [] {\footnotesize{$X_{(0,1,1)}$}};
        \node at (-0.866,-0.5) [] {\footnotesize{$X_{(2,0,0)}$}};
        \node at (0.866,-0.5) [] {\footnotesize{$X_{(0,2,0)}$}};
        \node at (0,-0.5) [] {\footnotesize{$X_{(1,1,0)}$}};
    \end{tikzpicture} \hspace{0.5cm}
    \begin{tikzpicture}[scale=1.8, baseline=0]
    \foreach \i in {1,...,5}
        \draw (0,2) -- (0,1.5);
        \draw (0,1.5) -- (-0.866*0.5,1.25);
        \draw (0,1.5) -- (0.866*0.5,1.25);
        \draw [fill=white] (0,2) circle [radius=0.1];
        \draw [fill=black] (0,1.5) circle [radius=0.1];
        \foreach \i in {0,1}
        {
        \draw (-0.866*0.5+0.866*\i,1.25) -- (-0.866*0.5+0.866*\i,0.75);
        \draw (-0.866*0.5+0.866*\i,0.75) -- (-0.866+0.866*\i,0.5);
        \draw (-0.866*0.5+0.866*\i,0.75) -- (0.866*\i,0.5);
        \draw [fill=white] (-0.866*0.5+0.866*\i,1.25) circle [radius=0.1];
        \draw [fill=black] (-0.866*0.5+0.866*\i,0.75) circle [radius=0.1];
        }
        \foreach \i in {0,1,2}
        {
        \draw (-0.866+0.866*\i,0.5) -- (-0.866+0.866*\i,0);
        \draw (-0.866+0.866*\i,0) -- (-0.866*1.5+0.866*\i,-0.25);
        \draw (-0.866+0.866*\i,0) -- (-0.866*0.5+0.866*\i,-0.25);
        \draw [fill=white] (-0.866+0.866*\i,0.5) circle [radius=0.1];
        \draw [fill=black] (-0.866+0.866*\i,0) circle [radius=0.1];
        }
        \foreach \i in {0,...,3}
        {
        \draw (-0.866*1.5+0.866*\i,-0.25) -- (-0.866*1.5+0.866*\i,-0.75);
        \draw (-0.866*1.5+0.866*\i,-0.75) -- (-0.866*2+0.866*\i,-1);
        \draw (-0.866*1.5+0.866*\i,-0.75) -- (-0.866+0.866*\i,-1);
        \draw [fill=white] (-0.866*1.5+0.866*\i,-0.25) circle [radius=0.1];
        \draw [fill=black] (-0.866*1.5+0.866*\i,-0.75) circle [radius=0.1];
        }
        \foreach \i in {0,...,4}
        {
        \draw [fill=white] (-2*0.866+0.866*\i,-1) circle [radius=0.1];
        }
        \node [blue] (00)  at (0.866*0.5,1.75)[ ] {\tiny{$\{2,3,4,5\}$}};
        \node [red] (10) at (0,1) [] {\tiny{$\{1,3,4,5\}$}};
        \node [blue] (11) at (0.866,1) [] {\tiny{$\{3,4,5\}$}};
        \node [blue] (22) at (0.866*1.5,0.25) [] {\tiny{$\{4,5\}$}};
        \node [blue] (33) at (0.866*2,-0.5) [] {\tiny{$\{5\}$}};
        \node [red] (20) at (-0.866*0.5,0.25) [] {\tiny{$\{1,2,4,5\}$}};
        \node [red] (21) at (0.866*0.5,0.25) [] {\tiny{$\{1,4,5\}$}};
        \node [red] (30) at (-0.866,-0.5) [] {\tiny{$\{1,2,3,5\}$}};
        \node [red] (32) at (0.866,-0.5) [] {\tiny{$\{1,5\}$}};
        \node [red] (31)at (0,-0.5) [] {\tiny{$\{1,2,5\}$}};
        \node [blue] (40) at (-0.866*1.5,-1.25) [] {\tiny{$\{1,2,3,4\}$}};
        \node [blue] (41) at (-0.866*0.5,-1.25) [] {\tiny{$\{1,2,3\}$}};
        \node [blue] (43) at (0.866*1.5,-1.25) [] {\tiny{$\{1\}$}};
        \node [blue] (42) at (0.866*0.5,-1.25) [] {\tiny{$\{1,2\}$}};
        \draw [red,->] (11) -- (10);
        \draw [red,->] (22) -- (21);
        \draw [red,->] (21) -- (20);
        \draw [red,->] (10) -- (00);
        \draw [red,->] (20) -- (10);
        \draw [red,->] (21) -- (11);
        \draw [red,->] (10) -- (21);
        \draw [red,->] (30) -- (20);
        \draw [red,->] (20) -- (31);
        \draw [red,->] (31) -- (21);
        \draw [red,->] (21) -- (32);
        \draw [red,->] (32) -- (22);
        \draw [red,->] (31) -- (30);
        \draw [red,->] (32) -- (31);
        \draw [red,->] (33) -- (32);
        \draw [red,->] (40) -- (30);
        \draw [red,->] (30) -- (41);
        \draw [red,->] (41) -- (31);
        \draw [red,->] (31) -- (42);
        \draw [red,->] (42) -- (32);
        \draw [red,->] (32) -- (43);
    \end{tikzpicture}
    \caption{Left: orientation on $\tilde{\bW}_5$ and the face variables. Right: the subsets $J_f$ associated with faces in $\bW_5$ and the extended quiver $\tilde{Q}_5$.}
    \label{fig: perfect orientation}
\end{figure}

Recall that the amalgamation morphism \cite{FGamalgamation} produces an upper triangulation matrix $x$ with entries being polynomials in the face variables $X_f$'s. On the other hand, the \emph{$(I,J)$th generalized minor} of $x$, denoted by $\Delta_I^J(x)$, is defined to be the determinant of the submatrix formed from the $I$-indexed rows and the $J$-indexed columns of $M$. By comparing the amalgamation morphism and the Postnikov's boundary measurement map, it is not hard to see that
\[
\Delta_I^J(x)=M_I^J.
\]
Thus, to compute the cluster DT transformation on $Q_n$, it suffices to know what ratios of generalized minors to take. By following \cite{Wengdb}, the generalized minors can be associated with faces of the ideal web $\bW_n$ as follows. We first equip the $k$th boundary face along the bottom edge of $S$ (counting from the left) with the set $\{1,2,\dots, n-k\}$, and equip the $k$th boundary face along the right edge of $S$ (counting from the top) with the set $\{k+1,k+2,\dots, n\}$. We then equip each bounded face $f$ corresponding to the triple $(a,b,c)$ with the subset
\begin{equation}\label{eq: def of J_f}
J_f=J_{(a,b,c)}=\{1,2,\dots, a+1, n-c,n-c+1,\dots, n\}.
\end{equation}
Note that it follows from the condition $a+b+c=n-3$ that $|J_{(a,b,c)}|=n-b-1$. We leave the boundary faces along the left edge of $S$ untouched. In the end, we extend the quiver $Q_n$ to a slightly bigger quiver $\tilde{Q}_n$ by adding a frozen vertex for each boundary face along the left and the bottom edge of $S$ (the ones associated with a subset of $\{1,2,\dots, n\}$, and then drawing arrows between these frozen vertices and the right most and bottom most quiver vertices of $Q_n$ so that they form counterclockwise $3$-cycles around the right most and bottom most black vertices in $\bW_n$. The right picture in Figure \ref{fig: perfect orientation} is a picture depicting the sets $J_f$'s and the extended quiver $\tilde{Q}_n$ for $n=5$. 

Now for each vertex $f$ in the extended quiver $\tilde{Q}_n$, we define a boundary measurement polynomial $M_f:=M_{\{1,2,\dots, |J_f|\}}^{J_f}$. We also denote the exchange matrix of $\tilde{Q}_n$ by $\tilde{\epsilon}$. 

\begin{thm} [{\cite[Theorem 1.1]{Wengdb}}] The cluster DT transformation on $Q_n$ acts on the cluster Poisson variables by
\begin{equation}\label{eq: DT in terms of M_f}
\DT(X_f)=\prod_g M_g^{\tilde{\epsilon}_{fg}}.
\end{equation}
\end{thm}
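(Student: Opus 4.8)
The plan is to derive \eqref{eq: DT in terms of M_f} by combining the geometric computation of the cluster DT transformation on $T\backslash\GL_n^{e,w_0}/T$ with the identification $\Delta_I^J(x)=M_I^J$ recorded above. Work on the cluster Poisson variety $\spec(\cX)$ of $T\backslash\GL_n^{e,w_0}/T$, which has $Q_n$ as an initial quiver. The essential geometric input is that on this variety $\DT$ coincides with the composition of the amalgamation morphism $a\colon\spec(\cX)\to\GL_n^{e,w_0}$, the morphism of generalized minors $\GL_n^{e,w_0}\to\spec(\cA)$, and the ensemble map $p\colon\spec(\cA)\to\spec(\cX)$ of \eqref{eq: p map}; this is the content of the double Bott--Samelson description of $\DT$. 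Under $a$ the matrix of coordinate functions on $\GL_n^{e,w_0}$ pulls back to the upper-triangular matrix $x$ whose entries are the face-variable polynomials produced by Postnikov's boundary measurement construction.

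With this in place the computation is purely formal. The ensemble map sends a cluster Poisson variable to a monomial in the cluster $\cA$-variables, $p(X_f)=\prod_g A_g^{\tilde{\epsilon}_{fg}}$, the product ranging over all vertices $g$ of the extended quiver $\tilde Q_n$, frozen ones included. Pulling this expression back along the first two morphisms, the cluster $\cA$-variable $A_g$ becomes the generalized minor $\Delta_{\{1,\dots,|J_g|\}}^{J_g}(x)$ of the amalgamation matrix $x$, where $J_g$ is the subset attached to the face $g$ by \eqref{eq: def of J_f} and the boundary-face rule; by $\Delta_I^J(x)=M_I^J$ and the definition $M_g=M_{\{1,\dots,|J_g|\}}^{J_g}$ this is exactly $M_g$. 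Hence
\[
\DT(X_f)=\prod_g\Delta_{\{1,\dots,|J_g|\}}^{J_g}(x)^{\tilde{\epsilon}_{fg}}=\prod_g M_g^{\tilde{\epsilon}_{fg}},
\]
as claimed.

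The two steps that carry the real content, and where I expect the difficulty to lie, are the two facts imported above. The first is the matching of the frozen data: one must verify that the frozen vertices of $\tilde Q_n$, their subset labels, and the arrows joining them to $Q_n$ reproduce exactly the frozen part of the cluster Poisson structure on $T\backslash\GL_n^{e,w_0}/T$, so that the exponents appearing in $p$ genuinely form $\tilde{\epsilon}$ and not a truncation or sign twist of it; this is a bookkeeping argument about the $\SL_n$-ideal web and the amalgamation, together with the normalization coming from the $T\times T$-quotient. The second is $\Delta_I^J(x)=M_I^J$ itself, which is the Lindstr\"om--Gessel--Viennot lemma for the acyclic orientation on $\tilde{\bW}_n$: a minor of the path matrix $x$ is the signed generating function of families in $C(I,J)$, and acyclicity forces every sign to be $+1$ and removes the winding-number correction present in Postnikov's general formula, so the minor equals the positive polynomial \eqref{eq: boundary measurement map}. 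Once both are available, \eqref{eq: DT F polynomial} provides a useful sanity check: for non-frozen $g$ one has $M_g=(\text{Laurent monomial})\cdot F_g$, while the frozen $M_g$'s are the monomials that reassemble the prefactor $X_f^{-1}$, recovering $\DT(X_f)=X_f^{-1}\prod_g F_g^{\epsilon_{fg}}$.
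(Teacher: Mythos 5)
The statement you are asked to prove is imported in the paper as a citation to \cite[Theorem 1.1]{Wengdb}; the paper does not reprove it, but it does lay out all the ingredients in the surrounding paragraphs (the three-morphism factorization of $\DT$ through $\GL_n^{e,w_0}$, the boundary-measurement identity $\Delta_I^J(x)=M_I^J$, and the face-to-subset dictionary $J_f$). Your proposal correctly reassembles these ingredients into a proof: you apply the pullbacks $a^*\circ(\text{minors})^*\circ p^*$ in the right order, observe that $p^*(X_f)=\prod_g A_g^{\tilde{\epsilon}_{fg}}$, that each $A_g$ pulls back to the generalized minor $\Delta_{\{1,\dots,|J_g|\}}^{J_g}$ of the amalgamation matrix $x$, and that this equals $M_g$ by the Lindstr\"om--Gessel--Viennot identity for the acyclic orientation of $\tilde{\bW}_n$. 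This is exactly the logical route the paper indicates.

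One small point worth tightening: the map $p$ defined in \eqref{eq: p map} is written with the product running only over $j\in I$, i.e., using $\epsilon$ rather than $\tilde{\epsilon}$. The cluster ensemble map for $T\backslash\GL_n^{e,w_0}/T$ of course involves the frozen variables, so the correct exponent matrix is the extended one $\tilde{\epsilon}$, as you assert; but this extension of \eqref{eq: p map} is implicit rather than stated in the paper, so it is worth saying explicitly (and it is precisely the ``matching of frozen data'' you flag as the first real piece of content). Your identification of the two nontrivial inputs — the frozen-data bookkeeping and the $\Delta=M$ identity — and your final sanity check against \eqref{eq: DT F polynomial} are both accurate.
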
 

We observe that for each boundary face $f$ in $\bW_n$, there is a unique family of pairwise non-intersecting paths in $C(I_f,J_f)$ and therefore $M_f$ is just a monomial. On the other hand, for each bounded face $f$ in $\bW_n$, we would like to relate the terms in the boundary measurement map to lozenge tilings of a hexagon.

\begin{defn} For a triple of non-negative integers $(a,b,c)$, we define $H_{(a,b,c)}$ to be a hexagon with side lengths $(a+1,b+1,c+1,a+1,b+1,c+1)$ and interior angles being all $120^\circ$. A \emph{lozenge tiling} of $H_{(a,b,c)}$ is a tiling of $H_{(a,b,c)}$ with the standard lozenge tile, which is a rhombus with all sides being unit length and interior angles being $(60^\circ, 120^\circ, 60^\circ,120^\circ)$.
\end{defn}

\begin{prop}\label{prop: bijection between terms in Mf and lozenge tilings} For each bounded face $f$ corresponding to a triple $(a,b,c)$, the terms in $M_f$ are in bijection with lozenge tilings of $H_{(a,b,c)}$.
\end{prop}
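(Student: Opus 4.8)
The plan is to construct an explicit bijection and then check that it matches monomials. The first step is to describe what a path family $\Gamma\in C(\{1,\dots,|J_f|\},J_f)$ can look like. Because the chosen orientation on $\tilde{\bW}_n$ is acyclic with all sources on the left boundary and all sinks on the right boundary, any family of pairwise disjoint paths is automatically non-crossing, so source $k$ is joined to the $k$-th smallest element of $J_f$. Using $J_f=\{1,\dots,a+1\}\cup\{n-c,\dots,n\}$ and $|J_f|=n-b-1=a+c+2$, the family splits into $a+1$ ``upper'' paths $k\mapsto k$ for $k\le a+1$ and $c+1$ ``lower'' paths landing on $n-c,\dots,n$. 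I would then show that, outside a bounded region $R_f$ of $\bW_n$ surrounding the face $f$, every path of $\Gamma$ is forced: the upper paths hug the top-left boundary and the lower paths hug the bottom-right boundary, each dominating a set of faces independent of $\Gamma$. The key point making this rigorous is that $J_f$ is a union of an initial and a final segment, so a disjoint non-crossing family has ``no room'' for detours away from the boundary except inside $R_f$. Consequently, up to a common monomial factor coming from the forced portions, $M_f$ is a sum over the free configurations of $\Gamma$ inside $R_f$.

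The second step is to invoke the classical dictionary between lozenge tilings and non-intersecting lattice paths. A tiling of $H_{(a,b,c)}$ is determined by the positions of the lozenges of any one fixed orientation; taking, say, the ``$c$-direction'' lozenges produces $c+1$ non-intersecting monotone lattice paths across the hexagon with prescribed endpoints, equivalently a plane partition inside an $(a+1)\times(b+1)\times(c+1)$ box. I would match these $c+1$ lattice paths with the $c+1$ lower web paths of $\Gamma$ inside $R_f$ through a local rule: at each white or trivalent black vertex of $\bW_n$ the two admissible ways a web path may proceed correspond to the two step types of a lattice path, and the faces of $R_f$ that a web path dominates correspond exactly to the lattice cells lying below the lozenge path, i.e. to the cubes stacked over the corresponding column of the box. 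Combined with the first step, this assigns to a tiling $\tau$ the term of $M_f$ equal to the common monomial times $\prod_g X_g^{h_g(\tau)}$, where $h_g(\tau)$ is the height of the cube stack over the column dual to $g$; different tilings yield different exponent vectors, so this is a genuine bijection onto the set of terms of $M_f$.

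The main obstacle is the local matching together with the endpoint bookkeeping, and I expect the delicate points to be: (i) verifying that a path cannot leave $R_f$ and re-enter, and that the boundary-hugging portions of the upper and lower paths are uniquely determined — this is where the initial-plus-final-segment shape of $J_f$ is used, and where the precise combinatorial identification of $R_f$ with the $(a+1)\times(b+1)\times(c+1)$ box must be nailed down; (ii) checking the local turning rule at the vertices of $\bW_n$ is an honest bijection with the two lattice-path step types, including the behaviour at trivalent black vertices — a finite local check; and (iii) confirming that dominated faces translate into the correct multiplicities, since exponents larger than $1$ arise precisely when several of the $c+1$ lower paths dominate a common face, matching cube heights greater than $1$ in the plane partition, so the analysis in (iii) must be carried out for the whole family at once rather than path by path. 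Once (i)--(iii) are established, injectivity and surjectivity are immediate, since both sides are then encoded by the same monotone non-intersecting lattice-path data inside $R_f$; alternatively, one could identify $M_f$ with a principally specialized skew Schur polynomial and cite the known indexing of its monomials by plane partitions in a box, but the bijective route above is the more self-contained one.
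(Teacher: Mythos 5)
Your proposal follows essentially the same approach as the paper's proof: both observe that the paths to sinks $1,\dots,a+1$ are forced, that the remaining $c+1$ paths only carry freedom inside a hexagonal region surrounding $f$ (the paper's ``hexagonal span'' $Q_{(a,b,c)}$, your $R_f$), and then translate those $c+1$ free paths into the $c+1$ non-intersecting lattice paths of a fixed orientation that encode a lozenge tiling of $H_{(a,b,c)}$. The delicate points you flag (that paths cannot wander and re-enter, and the local step-type matching at the web vertices) are exactly the ones the paper also relies on, handled there via the explicit convex-hull description of $Q_{(a,b,c)}$ and the ``first edge fully inside $H_{(a,b,c)}$, then every other edge'' rule, so your proposal is a faithful alternative phrasing of the same argument.
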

\begin{proof} Based on the way $\tilde{\bW}_n$ is oriented, there are no choices for the paths that end at the first $a+1$ indices $1,2,\dots, a+1$ of $J_{(a,b,c)}$. For the remaining $c+1$ many indices, we can denote them as $b+k+1$ for $a+2\leq k\leq a+c+2$. Then the path ending at the sink $b+k+1$ in $J_{(a,b,c)}$ must come in from the source $k$. Due to the restrictions imposed by this pairing of the sources and sinks, we see that the only freedom in choosing different families of pairwise disjoint paths lies inside a confined region of the ideal web $\bW_n$. To be precise, let us define the \emph{hexagonal span} of the vertex $(a,b,c)$ of the quiver $Q_n$ to be the full subquiver $Q_{(a,b,c)}$ defined by the quiver vertices inside the closed convex hull of the points 
\[
(a,0,b+c),\quad (0,a,b+c), \quad (0,a+b,c), \quad (c,0,a+b), \quad (a+c,b,0), \quad (a+c,0,b). 
\]
Then different families of pairwise disjoint paths from $I_{(a,b,c)}$ to $J_{(a,b,c)}$ only differ within the edges adjacent to the faces corresponding to vertices inside $Q_{(a,b,c)}$. Figure \ref{fig: hexagonal region} highlights the hexagonal span $Q_{(2,1,1)}$ in $Q_7$ and the corresponding region in $\bW_7$.
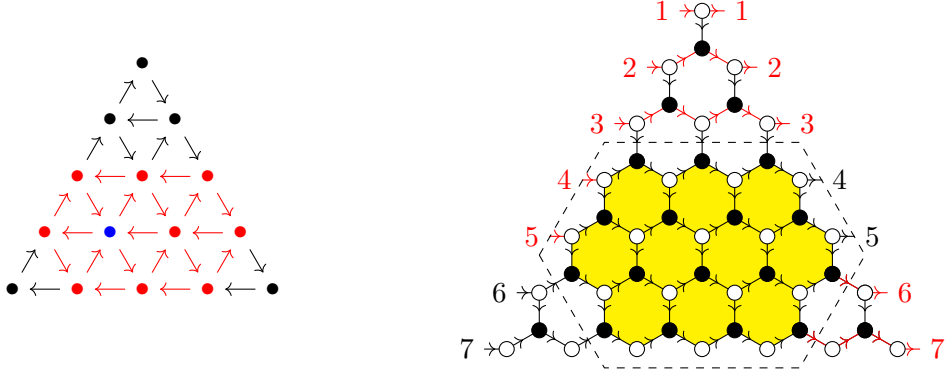
\begin{figure}[H]
    \centering
    \begin{tikzpicture}[baseline=0]
        \node (0) at (-150:2) [] {$\bullet$};
        \node [red] (1) at (-0.866,-1) [] {$\bullet$};
        \node [red] (2) at (-90:1) [] {$\bullet$};
        \node [red] (3) at (0.866,-1) [] {$\bullet$};
        \node (4) at (-30:2) [] {$\bullet$};
        \node [red] (5) at (-0.866*1.5,-0.25) [] {$\bullet$};
        \node [blue] (6) at (-150:0.5) [] {$\bullet$};
        \node [red] (7) at (-30:0.5) [] {$\bullet$};
        \node [red] (8) at (0.866*1.5,-0.25) [] {$\bullet$};
        \node [red] (9) at (-0.866,0.5) [] {$\bullet$};
        \node [red] (10) at (0,0.5) [] {$\bullet$};
        \node [red] (11)  at (0.866,0.5) [] {$\bullet$};
        \node (12) at (-0.866*0.5,1.25) [] {$\bullet$};
        \node (13) at (0.866*0.5,1.25) [] {$\bullet$};
        \node (14) at (0,2) [] {$\bullet$};
        \draw [->] (1) -- (0);
        \draw [->] (0) -- (5);
        \draw [red,->] (2) -- (1);
        \draw [red,->] (3) -- (2);
        \draw [->] (4) -- (3);
        \draw [red,->] (5) --(1);
        \draw [red,->] (1) -- (6);
        \draw [red,->] (6) -- (2);
        \draw [red,->] (2) --(7);
        \draw [red,->] (7) -- (3);
        \draw [red,->] (3) -- (8);
        \draw[->] (8) -- (4);
        \draw [red,->] (6) -- (5);
        \draw [red,->] (7) -- (6);
        \draw [red,->] (8) -- (7);
        \draw [red,->] (5) -- (9);
        \draw [red,->] (9) -- (6);
        \draw [red,->] (6) -- (10);
        \draw [red,->] (10) -- (7);
        \draw [red,->] (7) -- (11);
        \draw [red,->] (11) -- (8);
        \draw [red,->] (10) -- (9);
        \draw [red,->] (11) -- (10);
        \draw [->] (9) -- (12);
        \draw [->] (12) -- (10);
        \draw [->] (10) -- (13);
        \draw [->] (13) -- (11);
        \draw [->] (13) -- (12);
        \draw [->] (12) -- (14);
        \draw [->] (14) -- (13);
    \end{tikzpicture}\hspace{2cm}
    \begin{tikzpicture}[baseline=-20]
    \path [fill=yellow] (0.866-0.866*0.5*5, 2.75-0.75*5) -- (0.866-0.866*0.5*5+0.866*0.5, 2.75-0.75*5+0.25) -- (0.866-0.866*0.5*5+0.866*0.5, 2.75-0.75*5+0.75) -- (0.866-0.866*0.5*5+0.866, 2.75-0.75*5+1) -- (0.866-0.866*0.5*5+0.866*1.5, 2.75-0.75*5+0.75)--(0.866-0.866*0.5*5+0.866*2, 2.75-0.75*5+1) -- (0.866-0.866*0.5*5+0.866*2.5, 2.75-0.75*5+0.75) -- (0.866-0.866*0.5*5+0.866*3, 2.75-0.75*5+1) -- (0.866-0.866*0.5*5+0.866*3.5, 2.75-0.75*5+0.75) -- (0.866-0.866*0.5*5+0.866*3.5, 2.75-0.75*5+0.25)-- (0.866-0.866*0.5*5+0.866*4, 2.75-0.75*5+0) -- (0.866-0.866*0.5*5+0.866*4, 2.75-0.75*5-0.5) -- (0.866-0.866*0.5*5+0.866*3.5, 2.75-0.75*5-0.75) -- (0.866-0.866*0.5*5+0.866*3.5, 2.75-0.75*5-1.25) -- (0.866-0.866*0.5*5+0.866*3, 2.75-0.75*5-1.5)-- (0.866-0.866*0.5*5+0.866*2.5, 2.75-0.75*5-1.25) -- (0.866-0.866*0.5*5+0.866*2, 2.75-0.75*5-1.5) -- (0.866-0.866*0.5*5+0.866*1.5, 2.75-0.75*5-1.25) -- (0.866-0.866*0.5*5+0.866*1, 2.75-0.75*5-1.5)-- (0.866-0.866*0.5*5+0.866*0.5, 2.75-0.75*5-1.25)-- (0.866-0.866*0.5*5+0.866*0.5, 2.75-0.75*5-0.75) --(0.866-0.866*0.5*5, 2.75-0.75*5-0.5) -- cycle; 
    \foreach \i in {1,2,3}
    {
    \draw[red, decoration={markings, mark=at position 0.5 with {\arrow{>}}}, postaction={decorate}] (-0.3-0.866*\i*0.5+0.866,2.75-0.75*\i) node [left] {$\i$} -- (-0.866*\i*0.5+0.866,2.75-0.75*\i);
    \draw[red, decoration={markings, mark=at position 0.7 with {\arrow{>}}}, postaction={decorate}] (0.866*\i*0.5,2.75-0.75*\i) -- (0.3+0.866*\i*0.5,2.75-0.75*\i) node[right] {$\i$};
    }
     \foreach \i in {4,5}
    {
    \draw[red, decoration={markings, mark=at position 0.5 with {\arrow{>}}}, postaction={decorate}] (-0.3-0.866*\i*0.5+0.866,2.75-0.75*\i) node [left] {$\i$} -- (-0.866*\i*0.5+0.866,2.75-0.75*\i);
    \draw[decoration={markings, mark=at position 0.7 with {\arrow{>}}}, postaction={decorate}] (0.866*\i*0.5,2.75-0.75*\i) -- (0.3+0.866*\i*0.5,2.75-0.75*\i) node[right] {$\i$};
    }
     \foreach \i in {6,7}
    {
    \draw[decoration={markings, mark=at position 0.5 with {\arrow{>}}}, postaction={decorate}] (-0.3-0.866*\i*0.5+0.866,2.75-0.75*\i) node [left] {$\i$} -- (-0.866*\i*0.5+0.866,2.75-0.75*\i);
    \draw[red, decoration={markings, mark=at position 0.7 with {\arrow{>}}}, postaction={decorate}] (0.866*\i*0.5,2.75-0.75*\i) -- (0.3+0.866*\i*0.5,2.75-0.75*\i) node[right] {$\i$};
    }
    \foreach \i in {1,2}
    {
     \foreach \j in {1,...,\i}
     {
     \draw[decoration={markings, mark=at position 0.5 with {\arrow{>}}}, postaction={decorate}] (0.866*\j-0.866*0.5*\i, 2.75-0.75*\i) -- (0.866*\j-0.866*0.5*\i,2.25-0.75*\i);
     \draw[red,decoration={markings, mark=at position 0.5 with {\arrow{>}}}, postaction={decorate}] (0.866*\j-0.866*0.5*\i-0.866*0.5, 2-0.75*\i) -- (0.866*\j-0.866*0.5*\i,2.25-0.75*\i);
     \draw[red,decoration={markings, mark=at position 0.5 with {\arrow{>}}}, postaction={decorate}] (0.866*\j-0.866*0.5*\i,2.25-0.75*\i) -- (0.866*\j-0.866*0.5*\i+0.866*0.5, 2-0.75*\i);
     \draw [fill=white] (0.866*\j-0.866*0.5*\i, 2.75-0.75*\i) circle [radius=0.1];
     \draw [fill=black] (0.866*\j-0.866*0.5*\i,2.25-0.75*\i) circle [radius=0.1];
     }
    } 
    \foreach \i in {3,...,6}
    {
     \foreach \j in {1,...,\i}
     {
     \draw[decoration={markings, mark=at position 0.5 with {\arrow{>}}}, postaction={decorate}] (0.866*\j-0.866*0.5*\i, 2.75-0.75*\i) -- (0.866*\j-0.866*0.5*\i,2.25-0.75*\i);
     \draw[decoration={markings, mark=at position 0.5 with {\arrow{>}}}, postaction={decorate}] (0.866*\j-0.866*0.5*\i-0.866*0.5, 2-0.75*\i) -- (0.866*\j-0.866*0.5*\i,2.25-0.75*\i);
     \draw[decoration={markings, mark=at position 0.5 with {\arrow{>}}}, postaction={decorate}] (0.866*\j-0.866*0.5*\i,2.25-0.75*\i) -- (0.866*\j-0.866*0.5*\i+0.866*0.5, 2-0.75*\i);
     \draw [fill=white] (0.866*\j-0.866*0.5*\i, 2.75-0.75*\i) circle [radius=0.1];
     \draw [fill=black] (0.866*\j-0.866*0.5*\i,2.25-0.75*\i) circle [radius=0.1];
     }
    } 
    \draw[red, decoration={markings, mark=at position 0.5 with {\arrow{>}}}, postaction={decorate}] (0.866*5-0.866*0.5*5,2.25-0.75*5) -- (0.866*5-0.866*0.5*5+0.866*0.5, 2-0.75*5);
    \draw[red, decoration={markings, mark=at position 0.5 with {\arrow{>}}}, postaction={decorate}] (0.866*5-0.866*0.5*6,2.25-0.75*6) -- (0.866*5-0.866*0.5*6+0.866*0.5, 2-0.75*6);
        \draw[red, decoration={markings, mark=at position 0.5 with {\arrow{>}}}, postaction={decorate}] (0.866*6-0.866*0.5*6,2.25-0.75*6) -- (0.866*6-0.866*0.5*6+0.866*0.5, 2-0.75*6);
         \draw[red,decoration={markings, mark=at position 0.5 with {\arrow{>}}}, postaction={decorate}] (0.866*6-0.866*0.5*6-0.866*0.5, 2-0.75*6) -- (0.866*6-0.866*0.5*6,2.25-0.75*6);
    \draw [fill=white] (0.866*6-0.866*0.5*6, 2.75-0.75*6) circle [radius=0.1];
     \draw [fill=black] (0.866*5-0.866*0.5*5,2.25-0.75*5) circle [radius=0.1];
     \draw [fill=black] (0.866*5-0.866*0.5*6,2.25-0.75*6) circle [radius=0.1];
     \draw [fill=black] (0.866*6-0.866*0.5*6,2.25-0.75*6) circle [radius=0.1];
    \foreach \j in {1,...,7}
    {
    \draw [fill=white] (0.866*\j-0.866*0.5*7, 2.75-0.75*7) circle [radius=0.1];
    }
    \draw [dashed]  (0.866*0.5-0.866*0.5*5, 2.5-0.75*5) -- (0.866*0.5-0.866*0.5*5+0.866, 2.5-0.75*5+1.5) --  (0.866*0.5-0.866*0.5*5+0.866*4, 2.5-0.75*5+1.5) -- (0.866*0.5-0.866*0.5*5+0.866*5, 2.5-0.75*5) -- (0.866*0.5-0.866*0.5*5+0.866*4, 2.5-0.75*5-1.5) -- (0.866*0.5-0.866*0.5*5+0.866, 2.5-0.75*5-1.5) -- cycle ;
    \end{tikzpicture}
    \caption{The hexagonal span $Q_{(2,1,1)}$ and its corresponding region where families of pairwise disjoint paths differ. Note that subset associated with $(2,1,1)$ is $J_{(2,1,1)} = \{1,2,3,6,7\}$ (see Equation \eqref{eq: def of J_f}), and the source set is $\{1,2,3,4,5\}$. In particular, we color the part shared by any families of pairwise disjoint paths in red in the picture on the right.}
    \label{fig: hexagonal region}
\end{figure}
\begin{figure}[H]
    \centering
    \begin{tikzpicture}
        \foreach \i in {1,2,3}
        {
         \draw [lightgray, decoration={markings, mark=at position 0.5 with {\arrow{>}}}, postaction={decorate}] (0.866*\i, 2.25) -- (0.866*\i+0.866*0.5,2.5);
        \draw [lightgray, decoration={markings, mark=at position 0.5 with {\arrow{>}}}, postaction={decorate}] (0.866*\i+0.866*0.5, 2.5) -- (0.866*\i+0.866,2.25);
        \draw [lightgray, fill=lightgray] (0.866*\i+0.866*0.5,2.5) circle [radius=0.1];
        \draw [lightgray, decoration={markings, mark=at position 0.5 with {\arrow{>}}}, postaction={decorate}] (0.866*\i, 0.25) -- (0.866*\i+0.866*0.5,0);
        \draw [lightgray, decoration={markings, mark=at position 0.5 with {\arrow{>}}}, postaction={decorate}] (0.866*\i+0.866*0.5, 0) -- (0.866*\i+0.866,0.25);
        \draw [lightgray, fill=white] (0.866*\i+0.866*0.5,0) circle [radius=0.1];
        }
        \foreach \i in {1,...,4}
        {
        \draw [lightgray, decoration={markings, mark=at position 0.5 with {\arrow{>}}}, postaction={decorate}] (0.866*\i, 2.25) -- (0.866*\i,1.75);
        \draw [lightgray, decoration={markings, mark=at position 0.5 with {\arrow{>}}}, postaction={decorate}] (0.866*\i, 0.75) -- (0.866*\i,0.25);
        \draw [lightgray, decoration={markings, mark=at position 0.5 with {\arrow{>}}}, postaction={decorate}] (0.866*\i, 1.75) -- (0.866*\i+0.866*0.5,1.5);
        \draw [lightgray, decoration={markings, mark=at position 0.5 with {\arrow{>}}}, postaction={decorate}] (0.866*\i-0.866*0.5,1.5) --(0.866*\i, 1.75);
        \draw [lightgray, decoration={markings, mark=at position 0.5 with {\arrow{>}}}, postaction={decorate}] (0.866*\i, 0.75) -- (0.866*\i+0.866*0.5,1);
        \draw [lightgray, decoration={markings, mark=at position 0.5 with {\arrow{>}}}, postaction={decorate}] (0.866*\i-0.866*0.5,1) --(0.866*\i, 0.75);
        \draw [lightgray, fill=white] (0.866*\i,2.25) circle [radius=0.1];
        \draw [lightgray, fill=lightgray] (0.866*\i,1.75) circle [radius=0.1];
        \draw [lightgray, fill=white] (0.866*\i,0.75) circle [radius=0.1];
        \draw [lightgray, fill=lightgray] (0.866*\i,0.25) circle [radius=0.1];
        }
        \foreach \i in {1,...,5}
        {
        \draw [lightgray, decoration={markings, mark=at position 0.5 with {\arrow{>}}}, postaction={decorate}] (0.866*\i-0.866*0.5, 1.5) -- (0.866*\i-0.866*0.5,1);
        \draw [lightgray, fill=white] (0.866*\i-0.866*0.5,1.5) circle [radius=0.1];
        \draw [lightgray, fill=lightgray] (0.866*\i-0.866*0.5,1) circle [radius=0.1];
        }
        \draw [dashed] (0.866,-0.25) -- (0,1.25) -- (0.866,2.75) -- (4*0.866,2.75) -- (5*0.866,1.25) -- (4*0.866,-0.25) -- cycle;
        \draw [line width=5, pink] (0.866,2.25) -- (0.866*1.5,2.5);
        \draw [line width=5, pink] (0.866*2,2.25) -- (0.866*2,1.75);
        \draw [line width=5, pink] (0.866*2.5,1.5) -- (0.866*3,1.75);
        \draw [line width=5, pink] (0.866*3.5,1.5) -- (0.866*3.5,1);
        \draw [line width=5, pink] (0.866*4,0.75) -- (0.866*4.5,1);
        \draw [line width=5, pink] (0.866*0.5,1.5) -- (0.866*0.5,1);
        \draw [line width=5, pink] (0.866*1,0.75)--(0.866*1.5,1);
        \draw [line width=5, pink] (0.866*2,0.75) --(0.866*2.5,1);
        \draw [line width=5, pink] (0.866*3,0.75) -- (0.866*3,0.25);
        \draw [line width=5, pink] (0.866*3.5,0) -- (0.866*4,0.25);
        \draw [->, red] (0.866*0.5, 2.5) -- (0.866,2.25) -- (0.866*1.5,2.5) -- (0.866*2,2.25) -- (0.866*2,1.75) -- (0.866*2.5,1.5) -- (0.866*3,1.75) -- (0.866*3.5,1.5) -- (0.866*3.5,1) -- (0.866*4,0.75) -- (0.866*4.5,1) -- (0.866*5,0.75);
        \draw [red, ->] (0,1.75) -- (0.866*0.5,1.5) -- (0.866*0.5,1) -- (0.866*1,0.75)--(0.866*1.5,1) -- (0.866*2,0.75) --(0.866*2.5,1) -- (0.866*3,0.75) -- (0.866*3,0.25) -- (0.866*3.5,0) -- (0.866*4,0.25) -- (0.866*4.5,0);
        \draw [dashed] (0.866*0.5,2) -- (0.866,1.25) -- (0.866*0.5,0.5);
        \draw [dashed] (0.866*0.5,2) -- (0.866*1.5,2) -- (0.866*2,2.75);
        \draw [dashed] (0.866,1.25) -- (0.866*2,1.25) -- (0.866*1.5,0.5) -- (0.866*0.5,0.5);
        \draw [dashed] (0.866*2,2.75) -- (0.866*2.5,2) -- (0.866*2,1.25) -- (0.866*1.5,2);
        \draw [dashed] (0.866*2,1.25) -- (0.866*3,1.25) -- (0.866*3.5,2) -- (0.866*2.5,2);
        \draw [dashed] (0.866*1.5,0.5) -- (0.866*2.5,0.5) -- (0.866*3,1.25);
        \draw [dashed] (0.866*2.5,0.5) -- (0.866*3,-0.25) -- (0.866*3.5,0.5) -- (0.866*3,1.25);
        \draw [dashed] (0.866*3.5,0.5) -- (0.866*4,1.25) -- (0.866*3.5,2);
        \draw [dashed] (0.866*2,-0.25) -- (0.866*1.5,0.5);
        \draw [dashed] (0.866*3,2.75) -- (0.866*3.5,2) -- (0.866*4.5,2);
        \draw [dashed] (0.866*4,1.25) -- (0.866*5,1.25);
        \draw [dashed] (0.866*3.5,0.5) -- (0.866*4.5,0.5);
    \end{tikzpicture} \hspace{2cm}
    \begin{tikzpicture}
        \draw [fill=lightgray] (0.866*0.5,2) -- (0.866,1.25) -- (0.866*0.5,0.5) -- (0,1.25) -- cycle;
        \draw (0.866*0.5,2) -- (0.866*1.5,2) -- (0.866*2,2.75) -- (0.866,2.75) -- cycle;
        \draw [fill= gray] (0.866,1.25) -- (0.866*2,1.25) -- (0.866*1.5,2) -- (0.866*0.5,2) -- cycle;
        \draw [fill=lightgray] (0.866*2,2.75) -- (0.866*2.5,2) -- (0.866*2,1.25) -- (0.866*1.5,2) -- cycle;
        \draw [fill=gray] (0.866*2,2.75) -- (0.866*4,2.75) -- (0.866*5,1.25) -- (0.866*4,1.25) -- (0.866*3.5,2) -- (0.866*2.5,2) -- cycle;
        \draw [fill=lightgray] (0.866*4,1.25) -- (0.866*3,-0.25) -- (0.866*2.5,0.5) -- (0.866*3.5,2) -- cycle;
        \draw [fill=gray] (0.866*0.5,0.5) -- (0.866,-0.25) -- (0.866*3,-0.25) -- (0.866*2.5,0.5) -- cycle;
        \draw (0.866*5,1.25) -- (0.866*4,-0.25) -- (0.866*3,-0.25);
        \draw (0.866*2,-0.25) -- (0.866*1.5,0.5) -- (0.866*2,1.25) -- (0.866*3,1.25) -- (0.866*3.5,0.5) -- (0.866*4.5,0.5);
        \draw (0.866*4.5,2) -- (0.866*3.5,2) -- (0.866*3,2.75);
    \end{tikzpicture}
    \caption{Left: example of a lozenge tiling corresponding to a family of pairwise disjoint paths. Right: the corresponding 3D Young diagram, which consists of five cubes sitting inside a box of size $3\times 2\times 2$.}
    \label{fig: lozenge tiling}
\end{figure}
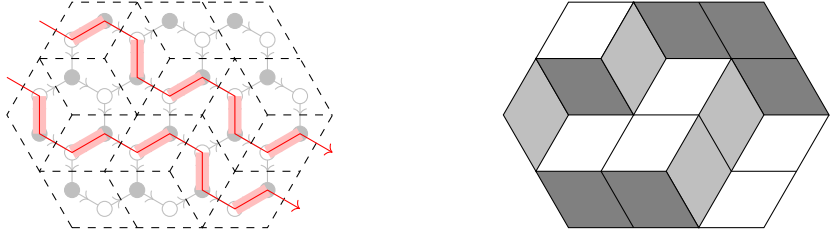
This region of freedom in $\bW_n$ is very close to being the hexagon $H_{(a,b,c)}$ itself. All we need to do is to draw a few more hexagons congruent to the bounded faces of $\bW_n$ surrounding this region of freedom, and then connect the centers of the adjacent hexagons with straight lines. In Figure \ref{fig: hexagonal region} we draw $H_{(2,1,1)}$ in dashed lines. Note that the side length of $H_{(a,b,c)}$ is precisely $(a+1,b+1,c+1,a+1,b+1,c+1)$ scaled by a factor of $\sqrt{2}$, which we may ignore for simplicity.

Next, we observe that for each family $\Gamma$ of pairwise disjoint paths, exactly $c+1$ many of them go through $H_{(a,b,c)}$: they all enter from the upper left edge of $H_{(a,b,c)}$ and exit through the lower right edge of $H_{(a,b,c)}$. To obtain a lozenge tiling of $H_{(a,b,c)}$ from $\Gamma$, for each path $\gamma$ in $\Gamma$ that goes through $H_{(a,b,c)}$, we take the first edge that is completely contained inside $H_{(a,b,c)}$, and then select every other edge along $\gamma$. This gives a collection of edges inside $H_{(a,b,c)}$, and place a lozenge tile with each of these edges in the middle as the line segment connecting the two equilateral triangles inside the lozenge tile. These lozenge tiles in turn determine a unique lozenge tiling of the whole hexagon $H_{(a,b,c)}$. The left picture in Figure \ref{fig: lozenge tiling} gives an example of one such lozenge tiling corresponding to a family of pairwise disjoint paths. Conversely, given any lozenge tiling of $H_{(a,b,c)}$ we can recover the paths $\gamma$ by working our way through the tilings from the upper left edge to the lower right edge. This proves that there is a bijection between terms in the boundary measurement polynomial $M_{f_{(a,b,c)}}$ and lozenge tilings of $H_{(a,b,c)}$.
\end{proof}

It is well-known that lozenge tilings of the hexagon $H_{(a,b,c)}$ are in turn in bijection with \emph{3D Young diagrams} (also known as \emph{plane partitions} that fit inside a box of size $(a+1)\times (b+1)\times (c+1)$. The right picture in Figure \ref{fig: lozenge tiling} shows the 3D Young diagram corresponding to the lozenge tiling in the left picture of the same Figure. Note that the set of 3D Young diagrams 
 inside a box admits a partial order by inclusion, which has a unique minimal element, namely the empty diagram, and a unique maximal element, namely the diagram that fills the whole box. By considering the family of pairwise disjoint paths associated with the empty Young diagram, we see that the term in $M_f$ corresponding to the empty Yound diagram dominates the smallest amount of faces and divides all other terms in $M_f$. Thus, we can factor out this term $N_f$ and write
 \[
 M_f=N_f\Phi_f
 \]
 where $\Phi_f$ is a polynomial in $X_g$'s with a constant term $1$. By convention, we set $N_f=M_f$ for boundary faces, since $M_f$ is itself a monomial. Now Equation \eqref{eq: DT in terms of M_f} can be written as
 \begin{equation}\label{eq: DT of X variable}
\DT(X_f)=\left(\prod_gN_g^{\tilde{\epsilon}_{fg}}\right)\left(\prod_g\Phi_g^{\epsilon_{fg}}\right).
\end{equation}

We claim that $\Phi_f$ can be identified with an ideal function of a simply-labeled pointed poset. In fact, we already have a poset associated with the face $f_{(a,b,c)}$, namely the set of 3D Young diagrams inside a box of size $(a+1)\times (b+1)\times (c+1)$. But we can obtain the same poset by performing a ``3D lift'' of the hexagon span $Q_{(a,b,c)}$. The \emph{3D lift} of the full subquiver $Q_{(a,b,c)}$ is an acyclic quiver with vertex set
\[
\{(x,y,z)\mid 0\leq x\leq c, 0\leq y\leq a, 0\leq z\leq b\}.
\]
Note that this vertex set admits a surjection onto the vertex set of $Q_{(a,b,c)}$ by
\[
\pi:(x,y,z)\mapsto (a,b,c)+x(1,0,-1)+y(-1,1,0)+z(0,-1,1).
\]
Now for every pair of vertices $i$ and $j$ of distance $1$ in the 3D lift, we define $\epsilon_{ij}:=\epsilon^{Q_n}_{\pi(i)\pi(j)}$. Since this 3D lift is an acyclic quiver, it is naturally a Hasse diagram and hence in turn defines a poset $P(a,b,c)$. Based on this construction, it is not hard to see the following fact.

\begin{lem}\label{lem: isomorphic posets} $P(a,b,c)$ is isomorphic to the post of 3D Young diagrams inside a box of size $(a+1)\times (b+1)\times (c+1)$.
\end{lem}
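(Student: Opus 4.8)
Recall (as in the examples of the introduction) that by the poset of $3$D Young diagrams inside a box of size $(a+1)\times(b+1)\times(c+1)$ we mean the poset of unit cells of that box, a cell $(i,j,k)$ being declared $\le (i',j',k')$ when $i\le i'$, $j\le j'$ and $k\le k'$; this is just a product of three chains, of lengths $a+1$, $b+1$, $c+1$, and its order ideals are precisely the $3$D Young diagrams (plane partitions) fitting in the box. So the lemma amounts to reading off the covering relations of $P(a,b,c)$ directly from the definition of the $3$D lift and checking that they are exactly those of such a product of chains.

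The one substantive step is to record the orientations of the arrows of $Q_n$ between lattice-adjacent quiver vertices. By construction there is a single arrow between any two such vertices, and the clockwise $3$-cycle convention around each interior white vertex of $\bW_n$ (followed by the deletion of the resulting $2$-cycles) forces every one of these arrows to point in one of the three directions $v_1=(1,-1,0)$, $v_2=(0,1,-1)$, $v_3=(-1,0,1)$, expressed in the coordinates $(a,b,c)$; note $v_1+v_2+v_3=0$, so that of the six directions joining a vertex of $Q_n$ to its lattice neighbours, namely $\pm v_1,\pm v_2,\pm v_3$, the arrows realize exactly $v_1,v_2,v_3$. The case $n=5$ is drawn in Figure~\ref{fig: bipartite graph}, and the general interior computation is identical.

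Now unwind the $3$D lift. Its surjection $\pi$ sends the three coordinate directions $e_x,e_y,e_z$ of the cube $\{0\le x\le c,\ 0\le y\le a,\ 0\le z\le b\}$ to $(1,0,-1)=-v_3$, $(-1,1,0)=-v_1$ and $(0,-1,1)=-v_2$ respectively. Hence for a cube edge joining $(x,y,z)$ to $(x+1,y,z)$, the two endpoints have $\pi$-images differing by $-v_3$, so the unique $Q_n$-arrow between these images — necessarily in direction $v_3$, since of $\pm v_3$ only $v_3$ occurs — runs from $\pi(x+1,y,z)$ to $\pi(x,y,z)$; pulling back along $\pi$, the corresponding arrow of the $3$D lift runs from $(x+1,y,z)$ to $(x,y,z)$, i.e.\ $(x+1,y,z)>(x,y,z)$ in $P(a,b,c)$. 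The same computation for $e_y$ and $e_z$ shows that incrementing any single coordinate moves strictly upward. Since the edges of the $3$D lift are exactly the cube edges, the Hasse diagram of $P(a,b,c)$ is the grid on $\{0,\dots,c\}\times\{0,\dots,a\}\times\{0,\dots,b\}$ with every edge oriented toward the smaller coordinate, so the order on $P(a,b,c)$ is the componentwise order. The bijection $(x,y,z)\mapsto(y+1,\,z+1,\,x+1)$ is then an order isomorphism from $P(a,b,c)$ onto the poset of cells of the $(a+1)\times(b+1)\times(c+1)$ box, which proves the lemma; in particular $\cI(P(a,b,c))$ is order-isomorphic to the set of $3$D Young diagrams fitting inside that box.

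The main obstacle is entirely contained in the first step — pinning down the arrow directions of $Q_n$ uniformly, and in particular for those vertices of the hexagonal span $Q_{(a,b,c)}$ lying on $\partial S$, where the local web picture degenerates; here one can instead read the orientations off the explicit perfect orientation of $\tilde{\bW}_n$ fixed before Equation~\eqref{eq: boundary measurement map}, or appeal to the extra arrows of the extended quiver $\tilde{Q}_n$. Once the arrow pattern is in hand, the remaining steps are routine index bookkeeping.
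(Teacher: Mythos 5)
The paper states this lemma without proof, preceded only by ``it is not hard to see the following fact,'' so your job here is to fill that gap; your argument does so correctly and, I believe, along the intended lines: identify the three arrow directions $v_1,v_2,v_3$ of $Q_n$, observe that $\pi$ sends the coordinate directions of the box to $-v_3,-v_1,-v_2$, conclude that every cube edge of the $3$D lift is oriented toward the coordinate-wise smaller endpoint, and read off that $P(a,b,c)$ is the product of three chains.

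Two remarks. Your opening reinterpretation --- that ``the poset of $3$D Young diagrams inside a box'' must be read as the poset of \emph{cells} of the box (a product of three chains), so that its order ideals are the plane partitions --- is not pedantry but necessary: taken literally the lemma would already fail on cardinality grounds, since $P(a,b,c)$ has $(a+1)(b+1)(c+1)$ elements while there are exponentially many plane partitions in the box, and it is the cell-poset reading that makes the later appeal to $\cI(P(a,b,c))$ do its job. Second, the worry in your final paragraph about vertices of the hexagonal span near $\partial S$ is unfounded and can be dropped: two lattice-adjacent quiver vertices $(a,b,c)$ and $(a',b',c')$ of $Q_n$ (both with nonnegative barycentric coordinates) share exactly one lattice point of the $n$-triangulation, which is one of $(a+1,b,c+1)$, $(a,b+1,c+1)$, $(a+1,b+1,c)$, and in each case the unique coordinate of that point that could fail to be $\geq 1$ is precisely the one that is forced to be $\geq 1$ by the existence of the neighbour. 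So the shared white vertex is automatically interior, the clockwise $3$-cycle around it always produces the required arrow in the required direction, and no appeal to the perfect orientation of $\tilde{\bW}_n$ or to the extended quiver $\tilde{Q}_n$ is needed.
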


We label each element $i$ in $P(a,b,c)$ by $L(a,b,c) :X_{\pi(i)}$. Below is an example of such labeled posets.

\begin{figure}[H]
    \centering
    \begin{tikzpicture}[baseline=0,scale=0.8]
    \node [blue] (0) at (0,0) [] {\footnotesize{$X_{(2,1,1)}$}};
    \node (1) at (-1,-1) [] {\footnotesize{$X_{(3,1,0)}$}};
    \node (2) at (-1,1.5) [] {\footnotesize{$X_{(3,0,1)}$}};
    \node (3) at (0,2.5) [] {\footnotesize{$X_{(2,0,2)}$}};
    \node (4) at (2.5,0) [] {\footnotesize{$X_{(1,2,1)}$}};
    \node (5) at (1.5,-1) [] {\footnotesize{$X_{(2,2,0)}$}};
    \node [blue] (6) at (1.5,1.5) [] {\footnotesize{$X_{(2,1,1)}$}};
    \node (7) at (2.5,2.5) [] {\footnotesize{$X_{(1,1,2)}$}};
    \node (8) at (5,0) [] {\footnotesize{$X_{(0,3,1)}$}};
    \node (9) at (4,-1) [] {\footnotesize{$X_{(1,3,0)}$}};
    \node (10) at (4,1.5) [] {\footnotesize{$X_{(1,2,1)}$}};
    \node (11) at (5,2.5) [] {\footnotesize{$X_{(0,2,2)}$}};
    \draw [->] (1) -- (0);
    \draw [->] (2) -- (1);
    \draw [->] (2) -- (3);
    \draw [->] (3) -- (0);
    \draw [->] (4) -- (0);
    \draw [->] (5) -- (1);
    \draw [->] (6) -- (2);
    \draw [->] (7) -- (3);
    \draw [->] (5) -- (4);
    \draw [->] (6) -- (5);
    \draw [->] (6) -- (7);
    \draw [->] (7) -- (4);
    \draw [->] (8) -- (4);
    \draw [->] (9) -- (5);
    \draw [->] (10) -- (6);
    \draw [->] (11) -- (7);
    \draw [->] (9) -- (8);
    \draw [->] (10) -- (9);
    \draw [->] (10) -- (11);
    \draw [->] (11) -- (8);
    \end{tikzpicture}
    \caption{The 3D lift of the hexagonal span $Q_{(2,1,1)}$ in $Q_7$ together with the simple labeling.}
    \label{fig: hexagonal span}
\end{figure}
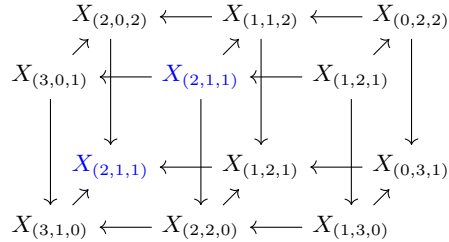

By combining Proposition \ref{prop: bijection between terms in Mf and lozenge tilings} and Lemma \ref{lem: isomorphic posets}, and comparing them with the definition of the boundary measurement polynomial, we can deduce the following conclusion.

\begin{prop} Let $(a,b,c)$ be the triple associated with a bounded face $f$ in $\bW_n$. Then $\Phi_f$ is equal to the ideal function $F_{(P(a,b,c),L(a,b,c))}$.
\end{prop}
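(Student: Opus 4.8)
\emph{Proof strategy.}
The plan is to read the terms of the boundary measurement polynomial $M_f$ through the chain of bijections already at our disposal and keep track of the monomial attached to each term. Write $(a,b,c)$ for the triple of $f$. By Proposition~\ref{prop: bijection between terms in Mf and lozenge tilings} the terms of $M_f$ are in bijection with lozenge tilings of $H_{(a,b,c)}$; lozenge tilings of $H_{(a,b,c)}$ are classically in bijection with 3D Young diagrams $Y$ inside a box of size $(a+1)\times(b+1)\times(c+1)$, which by Lemma~\ref{lem: isomorphic posets} are exactly the ideals $I_Y\in\cI(P(a,b,c))$. Thus each term of $M_f$ is indexed by an ideal $I\in\cI(P(a,b,c))$, and the whole statement reduces to the monomial identity: the term of $M_f$ indexed by $I$ equals $N_f\cdot L_I$, where $L=L(a,b,c)$ and $N_f$ is the monomial of the empty Young diagram, i.e.\ the monomial we factored out to define $\Phi_f$. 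Granting this, Definition~\ref{defn: ideal functions} gives $\Phi_f=M_f/N_f=\sum_{I\in\cI(P(a,b,c))}L_I=F_{(P(a,b,c),L(a,b,c))}$.

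To establish the monomial identity I would induct on $|I|$ along a chain $\emptyset=I_0\subsetneq I_1\subsetneq\cdots\subsetneq I_m=I$ of ideals, each obtained from the previous by adjoining a single element $v_j$ (necessarily a minimal element of the complement of $I_{j-1}$); such a chain exists for every ideal. The base case $I_0=\emptyset$ holds by the definition of $N_f$. For the inductive step, adjoining $v_j$ to $I_{j-1}$ corresponds, on the 3D Young diagram side, to adding a single cube at an addable corner $v_j$, and on the lozenge tiling side to the elementary flip that replaces the three lozenges meeting at the inner corner over $v_j$ by the three lozenges of the opposite orientation. Tracing this flip through the lozenge-to-path dictionary used in the proof of Proposition~\ref{prop: bijection between terms in Mf and lozenge tilings}, the flip reroutes exactly one of the $c+1$ pairwise disjoint paths passing through $H_{(a,b,c)}$, detouring it locally around the single bounded face of $\bW_n$ that is the image $\pi(v_j)$ of $v_j$. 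Since the orientation on $\tilde{\bW}_n$ is the fixed acyclic one and a face is dominated when it lies to the right of a path, this detour enlarges the dominated multiset by precisely the one face $\pi(v_j)$; hence the monomial of $I_j$ equals the monomial of $I_{j-1}$ times $X_{\pi(v_j)}=L_{v_j}$. By the inductive hypothesis the monomial of $I_j$ is $N_f\cdot L_{I_j}$, completing the induction and hence the proof.

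The main obstacle is this inductive step: one has to verify carefully that the flip of a lozenge tiling at the corner over $v$ corresponds to rerouting a single path around exactly the face $\pi(v)$, and that, with the chosen orientation and the ``dominated $=$ to the right'' convention, the detour \emph{adds} $\pi(v)$ to the dominated multiset rather than deletes it --- in particular that the empty Young diagram really yields the $\pi$-minimal monomial $N_f$, consistent with the paragraph preceding the proposition. This is a purely local check: one needs only inspect the portion of $\bW_n$ around the bounded face corresponding to $v$, together with the six lozenge positions around the corresponding vertex of the hexagon $H_{(a,b,c)}$, and compare the two routings of the affected path. Once this is in place, the factorization $M_f=N_f\cdot F_{(P(a,b,c),L(a,b,c))}$ together with the constant-term-$1$ feature of an ideal function gives $\Phi_f=F_{(P(a,b,c),L(a,b,c))}$ at once.
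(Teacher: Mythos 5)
Your proposal is correct and follows essentially the same approach as the paper: the paper also identifies the covering relation in the poset of 3D Young diagrams with the local lozenge flip $\begin{tikzpicture}[baseline=-3]\foreach \i in {0,...,5}{\draw (60+\i*60:0.3) -- (120+\i*60:0.3);}\foreach \i in {0,...,2}{\draw (120*\i:0.3) -- (0,0);}\end{tikzpicture} < \begin{tikzpicture}[baseline=-3]\foreach \i in {0,...,5}{\draw (60+\i*60:0.3) -- (120+\i*60:0.3);}\foreach \i in {0,...,2}{\draw (60+120*\i:0.3) -- (0,0);}\end{tikzpicture}$, and observes that this flip enlarges the dominated multiset by exactly the single face labeling the newly added cube. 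Your write-up is a carefully fleshed-out version (explicitly naming the chain induction, the base case $N_f$ from the empty diagram, and the rerouting of one path), but the key idea and structure match the paper's proof.
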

\begin{proof} The partial order on 3d Young diagrams can be interpreted as the following move on the lozenge tilings: $\begin{tikzpicture}[baseline=-3]
    \foreach \i in {0,...,5}
    {
    \draw (60+\i*60:0.5) -- (120+\i*60:0.5);
    }
    \foreach \i in {0,...,2}
    {
    \draw (120*\i:0.5) -- (0,0);
    }
\end{tikzpicture}< \begin{tikzpicture}[baseline=-3]
    \foreach \i in {0,...,5}
    {
    \draw (60+\i*60:0.5) -- (120+\i*60:0.5);
    }
    \foreach \i in {0,...,2}
    {
    \draw (60+120*\i:0.5) -- (0,0);
    }
\end{tikzpicture}$. This move exactly increases the number of dominated faces (counted with multiplicity) by $1$, and this newly added face is precisely the index of the labeling of the newly added cube in the 3D Young diagram.
\end{proof}

Let us now relate $\Phi_f$ with $F$-polynomials of the cluster DT transformation. In \cite{Wengdb}, Weng proves that $\prod_gN_g^{\tilde{\epsilon}_{fg}}=X_f^{-1}$, and therefore Equation \eqref{eq: DT of X variable} can be simplified to
\begin{equation}\label{eq3} 
\DT(X_f)=X_f^{-1}\prod_g \Phi_g^{\epsilon_{fg}}.
\end{equation}
We observe that this equation resembles the second equation in the separation formula for DT transformations \eqref{eq: DT F polynomial}. This motivates the formulation of the following theorem.

\begin{thm}\label{thm: F polynomials for DT of triple of flags} For any bounded face $f$ of $\bW_n$, $F_f=\Phi_f$.
\end{thm}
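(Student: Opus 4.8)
The plan is to sandwich the two families $\{F_f\}$ and $\{\Phi_f\}$ between the two ``separation-type'' identities already in hand. First I would compare Equation \eqref{eq3} with the cluster Poisson half of the separation formula for the DT transformation \eqref{eq: DT F polynomial}: both express $\DT(X_f)$ as $X_f^{-1}$ times a product of polynomials, so for every bounded face $f$ of $\bW_n$ we obtain the identity
\[
\prod_g F_g^{\epsilon_{fg}} \;=\; \prod_g \Phi_g^{\epsilon_{fg}}
\]
in $\bZ[\bX^{\pm 1}]$, where $g$ runs over all faces. By the Constant Term theorem each $F_g$ has constant term $1$, and by construction so does each $\Phi_g$ (the factorization $M_g=N_g\Phi_g$ strips off the term of the empty $3$d Young diagram, i.e.\ the empty ideal of $P(a,b,c)$). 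Hence the ratios $R_g:=F_g\Phi_g^{-1}$ are rational functions with value $1$ at $\bX=0$ and satisfy $\prod_g R_g^{\epsilon_{fg}}=1$ for all $f$.

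The remaining, and essential, step is to upgrade this multiplicative identity to the honest equality $F_f=\Phi_f$. Since the exchange matrix $\epsilon$ of $Q_n$ is typically singular (already for $Q_4$, the oriented $3$-cycle), this does not follow formally, so I would bring in two further inputs. The first is the $\cA$-side: $\DT(A_f)=A_f^{-1}p(F_f(\bX))$, while Weng's geometric realization of $\DT$ on $T\backslash\GL_n^{e,w_0}/T$ (amalgamation, generalized minors, cluster ensemble map), together with Postnikov's identity $\Delta_I^J(x)=M_I^J$, computes $\DT(A_f)$ explicitly through the boundary-measurement polynomials; combined with $\prod_g N_g^{\tilde\epsilon_{fg}}=X_f^{-1}$ this forces $p(F_f)=p(\Phi_f)$. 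The second is a support constraint: both $F_f$ and $\Phi_f$ only involve the face variables attached to the hexagonal span $Q_{(a,b,c)}$ --- for $\Phi_f$ this is Proposition \ref{prop: bijection between terms in Mf and lozenge tilings}, and for $F_f$ it follows from the truncation principle (Corollary \ref{cor: truncation}) via an induction on $n$, using that $Q_{(a,b,c)}$ is a full subquiver of a triple-of-flags quiver and that setting the remaining face variables to $0$ returns the DT $F$-polynomial on that smaller quiver, which by the inductive hypothesis equals the corresponding ideal function. Restricting the multiplicative identity to this region and using the $3$d Young diagram / ideal-function structure --- in particular Lemma \ref{lem: formula for pointed labeled posets}, which lets one read off $F_f$ and $\Phi_f$ recursively from their linear term $X_f$ upward, and Lemma \ref{lem: isomorphic posets} for the poset identification --- then eliminates the kernel ambiguity and yields $F_f=\Phi_f$.

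The main obstacle is exactly this promotion, i.e.\ controlling $\ker\epsilon$: all the combinatorics needed to identify $\Phi_f$ with an ideal function is already in place, and the only genuinely nontrivial point is that the DT action on cluster Poisson variables, which a priori pins down only $\prod_g F_g^{\epsilon_{fg}}$, determines each $F_f$ individually here. I expect the cleanest implementation to be the induction on $n$ through Corollary \ref{cor: truncation}: the base cases are small $Q_n$ where adjoining the frozen vertices of $\tilde Q_n$ makes the relevant matrix injective, and in the inductive step the multiplicative identity pins down the top-degree (full-box) term of $F_f$ while the ideal-function recursion propagates equality from the bottom of the poset upward through the already-known smaller-quiver data.
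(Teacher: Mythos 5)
You correctly identify the central obstacle: since $\epsilon$ has a nontrivial kernel, the multiplicative identity
\[
\prod_g F_g^{\epsilon_{fg}} \;=\; \prod_g \Phi_g^{\epsilon_{fg}}
\]
does not formally yield $F_f=\Phi_f$. However, the mechanisms you propose to close this gap do not actually close it, and this is where your sketch diverges from a workable argument.

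First, the $\cA$-side observation gives $p(F_f)=p(\Phi_f)$, but $p(X_i)=\prod_j A_j^{\epsilon_{ij}}$ is not injective precisely when $\epsilon$ is singular: any Laurent monomial $\prod_i X_i^{v_i}$ with $v\in\ker\epsilon$ maps to $1$. So $p(F_f)=p(\Phi_f)$ carries exactly the same ambiguity as the multiplicative identity you started from and cannot be used to break it. Second, you propose to use Lemma~\ref{lem: formula for pointed labeled posets} to read $F_f$ off recursively from its linear term --- but that lemma applies only to ideal functions of pointed labeled posets, and the fact that $F_f$ is such an ideal function is precisely what is being proved; it cannot be invoked on $F_f$ before the conclusion is in hand. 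Third, Corollary~\ref{cor: truncation} only tells you about $F_f\big|_{X_j=0 \text{ for } j\notin I'}$ for various full subquivers $Q'\subset Q_n$, which constrains the truncations of $F_f$ but does not uniquely determine $F_f$ itself; no induction on $n$ along these lines supplies the missing information about the ``interior'' coefficients. In short, you have several partial constraints, none of which independently or jointly pins down $F_f$, and the step you label ``eliminates the kernel ambiguity'' is asserted rather than carried out.

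The paper's actual proof resolves the kernel issue differently. It first establishes (Lemma~\ref{lem: right most vertices}) the equality $F_f=\Phi_f$ directly for all vertices on an edge of $Q_n$, by tracking $F$-polynomial mutation \eqref{eq: F mutation} along an explicit maximal green sequence on $Q_n$: since that sequence mutates the rightmost vertex of each row exactly once (within the subsequence $(\bk_0,\dots,\bk_{n-3})$), the $F$-polynomials at those vertices are already final after that subsequence and can be matched, by an explicit poset computation, to $\Phi_{(a,0,n-a-3)}$. Rotational symmetry of $Q_n$ then covers all edge vertices. Finally, the theorem is propagated to interior vertices by a sweep (bottom row to top, left to right) using exactly the relation you wrote down, but applied for the neighboring face $f_{(a+1,b,c-1)}$ at each step: with the chosen ordering, every factor in $\prod_g F_g^{\epsilon_{fg}}=\prod_g\Phi_g^{\epsilon_{fg}}$ except the one for $(a,b,c)$ is already known to match, so the remaining factor is forced. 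The essential idea you are missing is therefore twofold: a genuine base case obtained by explicit $F$-polynomial mutation (not by truncation or the $\cA$-side), and a carefully ordered sweep so that each application of the multiplicative identity has exactly one unknown.
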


Since the exchange matrix $\epsilon$ of $Q_n$ is not necessarily invertible, we cannot directly conclude that $F_f=\Phi_f$. 

Instead, we need to make use of a concrete maximal green sequence on $Q_n$. It is known (e.g., \cite{GS2}) that a maximal green sequence on $Q_n$ can be constructed by the following iterative procedure.
\begin{itemize}
    \item If $n=3$, then a single mutation at the unique mutable vertex is a maximal green sequence. 
    \item For $n>3$, the quiver $Q_{n-1}$ embeds as a full subquiver inside $Q_n$ at the lower left hand corner. For each $0\leq a\leq n-3$, let $\bk_a$ be the mutation sequence $((n-a-3,a,0),(n-a-3,a-1,1), (n-a-3,a-2,2),\dots, (n-a-3,0,a))$. Then the composition $(\bk_0,\bk_1,\dots, \bk_{n-3})$ mutates only at green vertices and turns the right most quiver vertex in each row red. We then postcompose this mutation sequence by the maximal green sequence of $Q_{n-1}$ inductively. 
\end{itemize}
After applying this maximal green sequence, the resulting quiver is actually the mirror image of $Q_n$ after flipping over the vertical line of symmetry. Thus, the cluster DT transformation is the composition of this maximal green sequence together with the quiver isomorphism of flipping the quiver horizontally.

\begin{lem}\label{lem: right most vertices} Theorem \ref{thm: F polynomials for DT of triple of flags} is true for the faces with triples $(a,0,n-a-3)$.
\end{lem}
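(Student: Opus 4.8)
The plan is to compute $F_f$ for $f=(a,0,n-a-3)$ directly, by running the explicit iterative maximal green sequence described above and applying the $F$-polynomial mutation formula \eqref{eq: F mutation} at each step where the vertex $f$ is mutated. I would set this up as an induction on $n$. The base case $n=3$ is immediate: $Q_3$ has a single vertex, its maximal green sequence is a single mutation, and $F_{(0,0,0)}=1+X_{(0,0,0)}$, which is precisely $\Phi_{(0,0,0)}$, the ideal function of the one-element poset $P(0,0,0)$.

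For the inductive step, recall that the maximal green sequence on $Q_n$ is the sweep $(\bk_0,\dots,\bk_{n-3})$ followed by the maximal green sequence of the copy of $Q_{n-1}$ embedded at the lower-left corner of $Q_n$ (via $(a',b',c')\mapsto(a'+1,b',c')$). The face $f=(a,0,c)$, $c=n-a-3$, lies in the row indexed by $c$, so it is mutated exactly once during the sweep --- as the final step of $\bk_c$, after which it is the rightmost vertex of that row and becomes red. If $a\geq 1$ it also lies in the embedded $Q_{n-1}$, where it corresponds to the analogous face $(a-1,0,c)$, so it keeps being mutated while the inductive maximal green sequence of $Q_{n-1}$ is performed. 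Counting shows $f$ is mutated $a+1$ times in total, matching the fact that $\Phi_f$ is the ideal function of the grid poset $P(a,0,c)\cong[a+1]\times[c+1]$, whose bottom row is the chain of labels $X_{(a,0,c)},X_{(a+1,0,c-1)},\dots,X_{(n-3,0,0)}$ running to the corner and whose complement is $P(a-1,0,c)$.

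Concretely I would: (i) describe the exchange matrix, the $C$-vectors, and the current $F$-polynomials of the neighbors of $f$ at the instant $f$ is mutated at the end of $\bk_c$ --- the sweep is rigid enough that this involves only a controlled region around $f$, and sign coherence (Theorem \ref{thm: sign coherence}) kills one of the two monomials in \eqref{eq: F mutation} --- and record the resulting polynomial; (ii) apply the inductive hypothesis on $Q_{n-1}$ to see how the remaining part $\mathrm{MGS}(Q_{n-1})$ of the sequence transforms $F_f$, correcting for the arrows from the newly added row of $b=0$ faces into $Q_{n-1}$ by the same monomial substitution $X_j\rightsquigarrow X_j\prod_k F_k^{\delta_{kj}}$ used in Proposition \ref{prop: F-polynomial for triangular extensions} and in the proof of Theorem \ref{thm: DT for acyclic quivers}; and (iii) combine the two contributions and recognize the result, via Lemma \ref{lem: formula for pointed labeled posets}, as the ideal function of $P(a-1,0,c)$ with one extra bottom layer attached, i.e. as $\Phi_f$. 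The case $a=0$ (where $f\notin Q_{n-1}$) serves as the base of this inner induction and is established by step (i) alone.

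The main obstacle is precisely the bookkeeping in (i)--(ii): faithfully tracking the intermediate exchange matrices, $C$-matrices, and partial $F$-polynomials of the (possibly already-mutated, $b=0$) neighbors of $f$ through the sweep and through $\mathrm{MGS}(Q_{n-1})$, and in particular checking that $f$ is mutated at no other time, that its $C$-vector stays a standard basis vector until its sweep mutation, and that the extra arrows contribute exactly the expected substitution. Once this is pinned down, identifying the outcome with the grid ideal function is a routine manipulation of order ideals using Lemmas \ref{lem: formula for pointed labeled posets} and \ref{lem: insertion}.
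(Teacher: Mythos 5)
Your proposal tracks the wrong vertex, and this is not a bookkeeping issue but a fundamental misreading of how the DT transformation is assembled. The cluster DT transformation on $Q_n$ is \emph{not} just the maximal green sequence $\mu_\bk$: it is $\mu_\bk$ composed with the horizontal-flip isomorphism $\sigma : (a,b,c)\mapsto(b,a,c)$, and by definition $\DT(A_i)=A'_{\sigma(i)}$. Combined with the separation formula \eqref{eq: DT F polynomial}, this means the DT $F$-polynomial $F_i$ is the $F$-polynomial attached to the vertex $\sigma(i)$ after running the maximal green sequence, not the one attached to $i$ itself. For $f=(a,0,n-a-3)$ on the left edge, $\sigma(f)=(0,a,n-a-3)$ lies on the \emph{right} edge, and that vertex is mutated \emph{exactly once} in the whole MGS (at the last step of $\bk_{n-3}$, and never inside the recursive $\mathrm{MGS}(Q_{n-1})$, since right-edge vertices are not in the embedded $Q_{n-1}$). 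This single-mutation fact is exactly what the paper's proof exploits.

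By instead applying \eqref{eq: F mutation} ``at each step where the vertex $f$ is mutated'', you would compute $F_{A'_f}$, which equals $F^{\DT}_{\sigma^{-1}(f)}=F^{\DT}_{(0,a,n-a-3)}$, not $F^{\DT}_{(a,0,n-a-3)}$. These genuinely differ; already for $n=4$ the MGS-$F$-polynomial at vertex $(1,0,0)$ is $1+X_{(0,1,0)}+X_{(0,1,0)}X_{(0,0,1)}$, whereas $\Phi_{(1,0,0)}=1+X_{(1,0,0)}+X_{(1,0,0)}X_{(0,1,0)}$. Your proposal also contains an internal inconsistency that flags the problem: you assert that after $\bk_c$ the vertex $f=(a,0,c)$ ``is the rightmost vertex of that row and becomes red'', yet in the very next sentence you use the fact that $f$ sits inside the embedded $Q_{n-1}$ (hence is \emph{not} on the right edge and is not permanently red). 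The ``mutated $a+1$ times'' count is correct for $(a,0,c)$, but that is precisely a sign you are following the wrong vertex: the right-edge vertex $(0,a,c)$ that actually governs $F^{\DT}_{(a,0,c)}$ is mutated once. Beyond the misidentification, tracking $a+1$ mutations with full $C$-matrix/$F$-polynomial bookkeeping is much heavier than what is needed; the paper uses the flip to collapse the computation to one application of \eqref{eq: F mutation}, fed by the inductive hypothesis on the sub-sweep $(\bk_0,\dots,\bk_{n-4})$ (giving $\Phi^{Q_{n-1}}_{(b,0,c)}$ at the neighbors) together with the explicitly known $C$-vector of $(0,a,n-a-3)$ at that moment, split into the three cases $a=n-3$, $0<a<n-3$, and $a=0$.
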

\begin{proof} Due to the horizontal flip, the $F$-polynomials of the cluster DT transformation at vertices $(a,0,n-a-3)$ are the $F$-polynomials of the maximal green sequence at vertices $(0,a,n-a-3)$, i.e., the right most vertices of the rows in $Q_n$. We then observe that the maximal green sequence only mutates at the right most vertex of each row once, which happens during the $\bk_{n-3}$ subsequence inside the subsequence $(\bk_0,\bk_1,\dots, \bk_{n-3})$. This implies that the $F$-polynomials of interest in this lemma are already settled after the mutation sequence $(\bk_0,\bk_1,\dots, \bk_{n-3})$.

Let us now do an induction on $n$. For the base case $n=3$, there is one vertex and $F_{(0,0,0)}=1+X_{(0,0,0)}$, which is equal to the ideal function $\Phi_{(0,0,0)}=[X_{(0,0,0)}]$. Now inductively suppose the Lemma is true for $n-1$. By viewing $Q_{n-1}$ as a full subquiver sitting at the lower left hand corner of $Q_n$, we see inductively that the $F$-polynomial $F'_{(1,b,c)}$ after the mutation sequence $(\bk_0,\bk_1,\dots, \bk_{n-4})$ is precisely the $F$-polynomial $F_{(b,0,c)}^{Q_{n-1}}$ of DT on $Q_{n-1}$, and is equal to the ideal function $\Phi_{(b,0,c)}^{Q_{n-1}}$ (we use a superscript here to indicate that these are the $F$-polynomials associated with $Q_{n-1}$ rather than $Q_n$). On the other hand, by a direct computation one also finds that the $C$-vector of the $(0,a,n-a-3)$ after the mutation sequence $(\bk_0,\bk_1,\dots, \bk_{n-4})$ has $1$'s in every entry in the same row as $(0,a,n-a-3)$ and z$0$'s everywhere else \cite[Proposition 4.3]{SWflag}.

Let us now consider what happens at $(0,a,n-a-3)$ in the next mutation subsequence $\bk_{n-3}$. When $a=n-3$, then the quiver right before mutating at $(0,n-3,0)$ looks like the left picture in Figure \ref{fig: local picture during mutation}. Note that by the inductive hypothesis,
\[
F'_{(1,n-4,0)}=\Phi_{(n-4,0,0)}^{Q_{n-1}}=[X_{(n-3,0,0)}\leftarrow X_{(n-2,1,0)}\leftarrow \cdots \leftarrow X_{(1,n-4,0)}].
\]
Thus, by applying the $F$-polynomial mutation formula \eqref{eq: F mutation}, we get
\begin{align*}
F_{(n-3,0,0)}=F'_{(0,n-3,0)}=&[X_{(n-3,0,0)}\leftarrow X_{(n-2,1,0)}\leftarrow \cdots \leftarrow X_{(1,n-4,0)}]+\prod_{k=0}^{n-3}X_{(n-k-3,k,0)}\\
=&[X_{(n-3,0,0)}\leftarrow X_{(n-2,1,0)}\leftarrow \cdots \leftarrow X_{(1,n-4,0)}\leftarrow X_{(0,n-3,0)}]\\
=&\Phi_{(n-3,0,0)}.
\end{align*}

\begin{figure}[H]
    \centering
    \begin{tikzpicture}[baseline=30]
        \node (0) [red] at (0,0) [] {$(1,n-4,0)$};
        \node (1) [teal] at (3,0) [] {$(0,n-3,0)$};
        \draw [->] (0) -- (1);
    \end{tikzpicture}  
    \begin{tikzpicture}[baseline=0]
        \node (0) [red] at (0,0) [] {$(1,a-1,n-a-3)$};
        \node (1) [teal] at (4,0) [] {$(0,a,n-a-3)$};
        \node (2) [red] at (1.5,-1) [] {$(1,a,n-a-4)$};
        \draw [->] (0) -- (1);
        \draw [->] (1) -- (2);
    \end{tikzpicture} 
    \begin{tikzpicture}[baseline=0]
     \node (0) [red] at (-1.5,-1) [] {$(1,0,n-4)$};
     \node (1) [teal] at (0,0) [] {$(0,0,n-3)$};
     \draw [->] (1) -- (0);
    \end{tikzpicture}
    \caption{Local picture near the vertex $(0,a,n-a-3)$ right before the mutation happens.}
    \label{fig: local picture during mutation}
\end{figure}
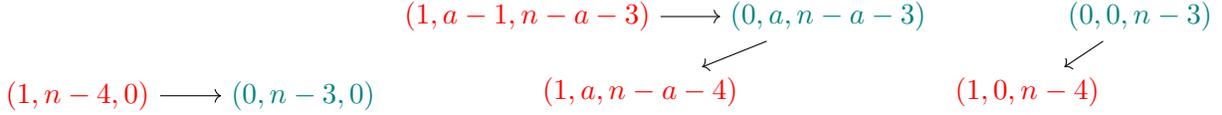

When $0<a<n-3$, then the quiver right before mutating at $(0,a,n-a-3)$ looks like the middle picture in Figure \ref{fig: local picture during mutation}. Note that by the inductive hypothesis, we have $F'_{(1,a-1,n-a-3)}=\Phi_{(a-1,0,n-a-3)}^{Q_{n-1}}$ and $F'_{(1,a,n-a-4)}=\Phi_{(a,0,n-a-4)}^{Q_{n-1}}$. For comparison, we draw these two ideal functions together in the same picture (Figure \ref{fig: F polynomial inductive step}). Now by applying the $F$-polynomial mutation formual \eqref{eq: F mutation}, we get
\[
F_{(a,0,n-a-3)}=F'_{(0,a,n-a-3)} = \Phi_{(a-1,0,n-a-3)}^{Q_{n-1}}+\Phi_{(a,0,n-a-4)}\prod_{k=0}^a X_{(a-k,k,n-a-3)}.
\]
We claim that the right hand side is the ideal function for the whole labeled poset in Figure \ref{fig: F polynomial inductive step}, which is precisely $\Phi_{(a,0,n-a-3)}$. Note that ideals in this labeled poset can be divided into two families: those that contain the vertex labeled by $X_{(0,a,n-a-3)}$ and those that do not. The ones that contain that vertex give rise to terms belong to the summand $\Phi_{(a,0,n-a-4)}\prod_{k=0}^a X_{(a-k,k,n-a-3)}$ and those that do not give rise to terms belong to the summand $\Phi_{(a-1,0,n-a-3)}^{Q_{n-1}}$. This proves the inductive step for the case $0<a<n-3$.
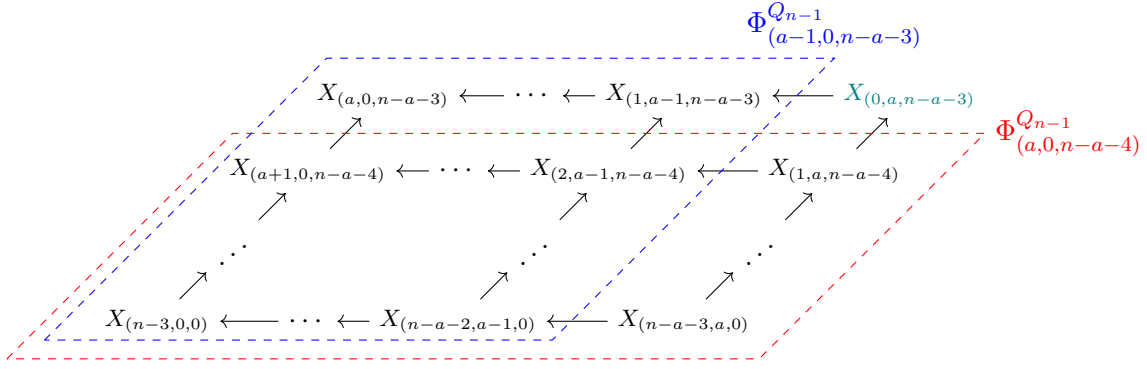
\begin{figure}[H]
    \centering
    \begin{tikzpicture}
        \node [teal] (14)  at (10,3) [] {\footnotesize{$X_{(0,a,n-a-3)}$}};
        \node (0) at (0,0) [] {\footnotesize{$X_{(n-3,0,0)}$}};
        \node (1) at (1,1) [] {$\iddots$};
        \node (2) at (2,2) [] {\footnotesize{$X_{(a+1,0,n-a-4)}$}};
        \node (3) at (4,2) [] {$\cdots$};
        \node (4) at (3,3) [] {\footnotesize{$X_{(a,0,n-a-3)}$}};
        \node (5) at (5,3) [] {$\cdots$};
        \node (6) at (7,3) [] {\footnotesize{$X_{(1,a-1,n-a-3)}$}};
        \node (7) at (6,2) [] {\footnotesize{$X_{(2,a-1,n-a-4)}$}};
        \node (8) at (9,2) [] {\footnotesize{$X_{(1,a,n-a-4)}$}}; 
        \node (9) at (5,1) [] {$\iddots$};
        \node (10) at (8,1) [] {$\iddots$};
        \node (11) at (4,0) [] {\footnotesize{$X_{(n-a-2,a-1,0)}$}};
        \node (12) at (7,0) [] {\footnotesize{$X_{(n-a-3,a,0)}$}};
        \node (13) at (2,0) [] {$\cdots$};
        \draw [->] (0) -- (1);
        \draw [->] (1) -- (2);
        \draw [->] (3) -- (2);
        \draw [->] (2) -- (4);
        \draw [->] (5) -- (4);
        \draw [->] (6) -- (5);
        \draw [->] (7) -- (3);
        \draw [->] (8) -- (7);
        \draw [->] (9) -- (7);
        \draw [->] (10) -- (8);
        \draw [->] (11) -- (9);
        \draw [->] (12) -- (10);
        \draw [->] (13) -- (0);
        \draw [->] (11) -- (13);
        \draw [->] (12) -- (11);
        \draw [->] (7) -- (6);
        \draw [->] (14) -- (6);
        \draw [->] (8) -- (14);
        \draw [dashed, red] (-2,-0.5) -- (1,2.5) -- (11,2.5) node[right] {$\Phi_{(a,0,n-a-4)}^{Q_{n-1}}$} -- (8,-0.5) -- cycle;
        \draw [dashed, blue] (-1.5,-0.25) -- (2.25,3.5) -- (9,3.5) node [above] {$\Phi_{(a-1,0,n-a-3)}^{Q_{n-1}}$} -- (5.25,-0.25) -- cycle;
    \end{tikzpicture}
    \caption{Comparison between the labeled posets for the ideal functions $\Phi_{(a-1,0,n-a-3)}^{Q_{n-1}}$ and $\Phi_{(a,0,n-a-4)}^{Q_{n-1}}$. The vertices are indexed according to $Q_n$.}
    \label{fig: F polynomial inductive step}
\end{figure}

When $a=0$, the quiver right before mutating at $(0,0,n-3)$ looks like the right picture in Figure \ref{fig: local picture during mutation}. Note that by the inductive hypothesis,
\[
F'_{(1,0,n-4)}=\Phi_{(0,0,n-4)}^{Q_{n-1}}=[X_{(1,0,n-4)} \leftarrow X_{(2,0,n-5)}\leftarrow \cdots \leftarrow X_{(n-3,0,0)}].
\]
Now by applying the $F$-polynomial mutation formula \eqref{eq: F mutation}, we get
\begin{align*}
F_{(0,0,n-3)}=&F'_{(0,0,n-3)}=1+X_{(0,0,n-3)}F'_{(1,0,n-4)}\\
=&[X_{(0,0,n-3)}\leftarrow X_{(1,0,n-4)} \leftarrow X_{(2,0,n-5)}\leftarrow \cdots \leftarrow X_{(n-3,0,0)}]=\Phi_{(0,0,n-3)}.
\end{align*}
This finishes the proof by induction.
\end{proof}

\noindent\textit{Proof of Theorem \ref{thm: F polynomials for DT of triple of flags}.} Now we are ready to prove Theorem \ref{thm: F polynomials for DT of triple of flags} for a general vertex $(a,b,c)$ in $Q_n$. Note that by Lemma \ref{lem: right most vertices}, Theorem \ref{thm: F polynomials for DT of triple of flags} is already true for the left most vertices on each row in $Q_n$. Due to the rotational symmetry of $Q_n$, we can deduce that Theorem \ref{thm: F polynomials for DT of triple of flags} is also true for all vertices that lie on the edges of $Q_n$. For the remaining quiver vertices, we will do a proof by induction by going from the bottom to top through all the rows, and going from left to right along each row. Inductively at each step for the quiver vertex $(a,b,c)$, the theorem is assumed to be true for all vertices $(x,y,z)$ with $z<c$ as well as those with $z=c$ and $y<b$. Now consider \eqref{eq3} and the second equation in \eqref{eq: DT F polynomial} for the face $f_{(a+1,b,c-1)}$. All except one factor in these equations have already been identified by the inductive hypothesis. Therefore this remaining factor, which is $\Phi_{(a,b,c)}$ in \eqref{eq3} and is $F_{(a,b,c)}$ in the second equation in \eqref{eq: DT F polynomial}, has to be identical as well. This proves the theorem.\qed

\bibliographystyle{biblio}

\bibliography{biblio}

\end{document}